\journal{}
\theoremstyle{plain}
\newtheorem{thm}{Theorem}[section]
\newtheorem{lem}[thm]{Lemma}
\newtheorem{prop}[thm]{Proposition}
\newtheorem{corollary}[thm]{Corollary}
\newtheorem{question}[thm]{Question}
\theoremstyle{definition}
\newtheorem{defn}[thm]{Definition}
\theoremstyle{remark}
\newtheorem{remark}[thm]{Remark}
\newtheorem*{claim*}{Claim}
\newcommand{\A}{\mathcal{A}}
\newcommand{\K}{\mathrm{K}}
\newcommand{\KA}{\mathrm{KA}}
\renewcommand{\P}{\mathcal{P}}
\newcommand{\Q}{\mathcal{Q}}
\renewcommand{\S}{\mathcal{S}}
\newcommand{\U}{\mathcal{U}}
\newcommand{\X}{\mathcal{X}}
\newcommand{\MLR}{\mathrm{MLR}}
\newcommand{\dom}{\mathrm{dom}}
\newcommand{\Fin}{\mathrm{Fin}}
\definecolor{purple}{rgb}{.9,0.2,.9}
\newcommand{\cs}{2^\omega}
\newcommand{\uh}{{\upharpoonright}}
\renewcommand{\phi}{\varphi}
\newcommand{\str}{2^{<\omega}}
\newcommand{\binrat}{\mathbb{Q}_2}
\newcommand{\halts}{{\downarrow}}
\newcommand{\diverge}{{\uparrow}}
\newcommand{\T}{\mathrm{T}}
\newcommand{\cNCR}{\mathrm{NCR}_{\mathrm{comp}}}
\newcommand{\NCR}{\mathrm{NCR}}
\newcommand{\atoms}{\mathrm{Atoms}}
\newcommand{\llb}{\llbracket}
\newcommand{\rrb}{\rrbracket}
\renewcommand{\tt}{\mathrm{tt}}
\newcommand{\wtt}{\mathrm{wtt}}
\newcommand{\emptystr}{\varepsilon}
\newcommand{\claimqed}{\hfill$\Diamond$\smallskip}
\newcommand{\claimqedmathmode}[1]{\ifmeasuring@#1\else\omit\hfill$\displaystyle\Diamond$\fi\ignorespaces}
\begin{document}
\sloppy

\begin{frontmatter}

\title{Randomness for computable measures and initial segment complexity}

\author{Rupert H\"olzl}
\address{Institute 1, Faculty of Computer Science\\
Universität der Bundeswehr München\\
Werner-Heisenberg-Weg 39\\
85577 Neubiberg\\
Germany}
\ead{r@hoelzl.fr}
\ead[url]{http://hoelzl.fr}

\author{Christopher P. Porter}
\address{Department of Mathematics and Computer Science\\
	Drake University\\
	Des Moines, IA 50311\\ 
USA}
\ead{cp@cpporter.com}
\ead[url]{http://cpporter.com}

\begin{abstract}
We study the possible growth rates of the Kolmogorov complexity of initial segments of sequences that are random with respect to some computable measure on $\cs$, the so-called proper sequences. Our main results are as follows:
(1)~We show that the initial segment complexity of a proper sequence $X$ is bounded from below by a computable function (that is, $X$ is complex) if and only if $X$ is random with respect to some computable, continuous measure. (2)~We prove that a uniform version of the previous result fails to hold:  there is a family of complex sequences that are random with respect to a single computable measure such that for every computable, continuous measure $\mu$, some sequence in this family fails to be random with respect to $\mu$.  (3) We show that there are proper sequences with extremely slow-growing initial segment complexity, that is, there is a proper sequence the initial segment complexity of which is infinitely often below every computable function, and even a proper sequence the initial segment complexity of which is dominated by all computable functions.  (4) We prove various facts about the Turing degrees of such sequences and show that they are useful in the study of certain classes of pathological measures on $\cs$, namely diminutive measures and trivial measures.

\end{abstract}

\begin{keyword}
Computable measures \sep proper sequences \sep atomic measures \sep complex sequences \sep autocomplex sequences \sep trivial measures \sep diminutive measures
\MSC[2010] 03D32
\end{keyword}

\end{frontmatter}

\section{Introduction}

The Levin-Schnorr Theorem establishes the equivalence of a certain measure-theoretic notion of typicality for infinite sequences (known as Martin-L\"of randomness) with a notion of incompressibility given in terms of Kolmogorov complexity.  Although the Levin-Schnorr Theorem is usually formulated for sequences that are random with respect to the Lebesgue measure on~$\cs$, it is well known that the theorem can be generalized to hold for any computable probability measure on~$\cs$.  More specifically, a sequence $X\in\cs$ is Martin-L\"of random with respect to a computable measure~$\mu$ if and only if the initial segment complexity of $X\uh n$ is bounded from below by $-\log\mu(X\uh n)$.  Thus we see that certain values of the measure~$\mu$ constrain the possible values of the initial segment complexities of the $\mu$-random sequences.

In this study, we further explore the interaction between computable measures and the initial segment complexity of the sequences that are random with respect to these measures (hereafter, we will refer to those sequences that are random with respect to a computable measure as \emph{proper} sequences, following the terminology of Zvonkin and Levin~\cite{ZvoLev70}).  In the first half of this article we focus on the relationship between a class of sequences known as \emph{complex sequences} and those sequences that are random with respect to a computable, continuous measure. 
  First studied systematically by Kjos-Hanssen et al.~\cite{KjoMerSte11} (but also studied earlier by Kanovi\v c~\cite{Kan70}), complex sequences are those sequences whose initial segment complexities are bounded below by some computable function.   We characterize the complex proper sequences as the sequences that are random with respect to some computable \emph{continuous} measure. This is done by studying the ``removability'' of $\mu$-atoms, that is, sequences~$X$ such that $\mu(\{X\})>0$: We show that if a sequence $X$ is complex and random with respect to some computable measure $\mu$, we can define a computable, continuous measure $\nu$ such that $X$ is random with respect to $\nu$ by removing the atoms from $\mu$ while preserving $X$'s randomness.  It is natural to ask whether this removal of atoms can always be carried out while preserving {\em all} non-atomic random sequences simultaneously, again assuming that all of these random sequences are complex. We show that this is not the case.
  
Using this characterization of complex sequences through computable continuous measures, we establish new results on the relationship between the notions of avoidability, hyperavoidability, semigenericity, and not being random for any computable, continuous measure.  More specifically, when restricted to the collection of proper sequences, we show that these four notions are equivalent to being complex.  We also study the granularity of a computable, continuous measure~$\mu$ and show that the inverse of the granularity function provides a uniform lower bound for the initial segment complexity of $\mu$-random sequences. 

In the second half of this article we turn our attention to atomic computable measures, that is, computable measures~$\mu$ that have $\mu$-atoms.  First, we study atomic measures $\mu$ with the property that every $\mu$-random sequence is either a $\mu$-atom or is complex.  We show that for such measures~$\mu$, even though the initial segment complexity of each non-atom $\mu$-random sequence is bounded from below by some computable function, there is in general no uniform computable lower bound for every non-atom $\mu$-random sequence.  Next, we construct a computable atomic measure $\mu$ with the property that the initial segment complexity of each $\mu$-random sequence dominates no computable function, and a computable atomic measure $\nu$ with the property that the initial segment complexity of each $\nu$-random sequence
is dominated by all computable functions.  The former sequences are called \emph{infinitely often anti-complex}, while the latter are known simply as \emph{anti-complex}.

Lastly, we study two specific kinds of atomic measures:  diminutive measures and trivial measures. Here, a measure $\mu$ is \emph{trivial} if $\mu(\atoms_\mu)=1$, and diminutive measures are defined as follows.  First, for $\mathcal{C}\subseteq\cs$, $\mathcal{C}$ is \emph{diminutive} if it does not contain a computably perfect subclass.  Let $\mu$ be a computable measure, and let $(\U_i)_{i\in\omega}$ be a universal $\mu$-Martin-L\"of test.  Then we say that $\mu$ is \emph{diminutive} if $\U_i^c$ is a diminutive $\Pi^0_1$ class for every $i$.
We show that while every computable trivial measure is diminutive, the converse does not hold. The proof of this last statement gives an alternative, priority-free proof of a result by Kautz~\cite{Kau91} showing that there is a computable, non-trivial measure $\mu$ such that there is no $\Delta^0_2$, non-computable~$X\in\MLR_\mu$.

The remainder of the article is organized as follows.  In Section \ref{sec:background}, we provide some background on computability theory and algorithmic randomness.  Section \ref{sec:basic-prop} contains a discussion of the basic properties of complex sequences.  The relationship between complex sequences and randomness with respect to computable, continuous measures is investigated in Section \ref{sec:complex-continuous}.  In Section \ref{sec:atomic} we study the behavior of complex proper sequences in the context of atomic measures.  Next, in Section \ref{sec-non-complex}, we consider non-complex proper sequences.  Lastly, in Section \ref{sec-triv-dim-measures} we relate the results of Section \ref{sec-non-complex} to the class of diminutive measures.

\section{Background}\label{sec:background}

\subsection{Some computability theory}  We assume that the reader is familiar with the basics of computability theory (for instance, the material covered in Soare~\cite[Ch.~I-IV]{Soa87}, Nies~\cite[Ch.~1]{Nie09}, or Downey and Hirschfeldt~\cite[Ch.~2]{DowHir10}).

For a Turing machine $N\colon \str\rightarrow\str$, let $\dom(N)$ be $\{\sigma \colon (\exists s)  N(\sigma)[s]\halts\}$, where $N(\sigma)[s]\halts$ means that there is some~$t\leq s$ such that if $N$ is given input $\sigma$, then it halts after $t$ steps.

We adopt the following conventions for Turing functionals on Cantor space:  A \emph{Turing functional} $\Phi\colon\subseteq\cs\rightarrow\cs$ is represented by a c.e.\ set $S_\Phi$ of pairs of strings $(\sigma,\tau)$ such that 
if $(\sigma,\tau),(\sigma',\tau')\in S_\Phi$ and $\sigma\preceq\sigma'$, then $\tau\preceq\tau'$ or $\tau'\preceq\tau$, where $\sigma\preceq\tau$ means that $\sigma$ is an initial segment of $\tau$. 
Moreover, for each $\sigma\in\str$, we define $\Phi^\sigma$ to be the maximal string (in the order given by $\preceq$) in 
$\{\tau\colon (\exists \sigma'\preceq\sigma)((\sigma',\tau)\in S_\Phi)\}$.  We then define $\Phi^X$ to be the maximal (in the order given by~$\preceq$) sequence~$z\in\str\cup\cs$ such that $\Phi^{X \uh n}$ is a prefix of~$z$ for all~$n$, and we set $\dom(\Phi)=\{X\in\cs\colon\Phi^X\in\cs\}$.  Lastly, for~$\tau \in \str$ let $\Phi^{-1}(\tau)$ be $\{ \sigma\in \str \colon \exists \tau' \succeq \tau \colon (\sigma,\tau')\in\Phi\}$. 

Recall that a Turing functional $\Phi$ is called a {\em tt-functional} if $\dom(\Phi)=\cs$, and $\Phi$ is called a {\em wtt-functional} if there exists a computable function $f\colon \omega \rightarrow \omega$ such that
for all $n$ and $\tau$ with $|\tau|=n$, if $\Phi^X\succeq \tau$ for some $X\in\cs$, then there is some $k\leq f(n)$ such that
$(X\uh k,\tau)\in S_\Phi$.

\subsection{Kolmogorov complexity and a priori complexity}

Recall that the {\em prefix-free Kolmogorov complexity} of a string $\sigma \in \str$ is defined as the length of the shortest program producing $\sigma$ by a fixed universal, prefix-free machine $U$, that is, as $\K(\sigma)=\min\{|\tau|\colon \tau \in \str \;\&\; U(\tau)=\sigma\}.$
We will more frequently work with a priori complexity, which is defined in terms of semi-measures on $\cs$.  Recall that if we denote the empty string by $\emptystr$ then
 a {\em semi-measure} is a function $\rho\colon\str\rightarrow[0,1]$ satisfying (i) $\rho(\emptystr) = 1$ and (ii) $\rho(\sigma)\geq\rho(\sigma0)+\rho(\sigma1)$ for all $\sigma \in \str$.  A semi-measure is {\em left-c.e.}\ if there is a computable function $\widetilde \rho \colon \str \times \omega \rightarrow \binrat$, non-decreasing in its second argument, and such that for all~$\sigma$, $\lim_{i \rightarrow +\infty} \widetilde \rho(\sigma,i) = \rho(\sigma)$.  That is, the values of $\rho$ on basic open sets are uniformly effectively approximable from below.

The following is a well-known fact first proved by Zvonkin and Levin~\cite{ZvoLev70}.
\begin{thm}
There exists a \emph{universal} left-c.e.\ semi-measure, that is, there exists a left-c.e.\ semi-measure $M$ such that, for every left-c.e.\ semi-measure~$\rho$, there exists some $c\in\omega$ such that $\rho\leq c\cdot M$. 
\end{thm}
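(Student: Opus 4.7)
The plan is the standard enumeration-plus-weighting argument going back to Zvonkin and Levin. First I would fix an effective enumeration $(\widetilde{\rho}_e)_{e \in \omega}$ of all partial computable functions $\str \times \omega \to \binrat$ that are non-decreasing in the second argument (one can enforce monotonicity by only outputting a new value when it exceeds the previous one). Each such $\widetilde{\rho}_e$ gives, in the limit, a (partial) function $\rho_e\colon \str \to [0,1]$, but of course $\rho_e$ might fail to be a semi-measure: it could exceed $1$ at $\emptystr$, or violate the inequality $\rho(\sigma) \geq \rho(\sigma 0)+\rho(\sigma 1)$.

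The key step is an effective \emph{truncation} that converts $\widetilde{\rho}_e$ into an approximation $\widetilde{\sigma}_e$ whose limit $\sigma_e$ is always a genuine left-c.e.\ semi-measure, and which agrees with $\rho_e$ whenever $\rho_e$ was already a left-c.e.\ semi-measure. I would do this stage by stage: at stage $s$, set $\widetilde{\sigma}_e(\sigma, s)$ to equal $\widetilde{\rho}_e(\sigma, s)$ whenever doing so preserves $\widetilde{\sigma}_e(\emptystr, s) \le 1$ and the semi-measure inequalities at all ancestors of $\sigma$ computed so far; otherwise leave the value unchanged from stage $s-1$. Monotonicity in $s$ of the input guarantees $\widetilde{\sigma}_e$ is non-decreasing, and if $\rho_e$ was already a semi-measure the truncation is never triggered, so $\sigma_e = \rho_e$.

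Then I would set
\[
M(\sigma) \;=\; \sum_{e \in \omega} 2^{-(e+1)} \, \sigma_e(\sigma).
\]
This is left-c.e.\ as a uniformly effective sum of left-c.e.\ functions, it satisfies $M(\emptystr) \le \sum_e 2^{-(e+1)} = 1$ (and one can rescale so that equality holds, or absorb the slack by including the constant semi-measure), and it inherits the semi-measure inequality from each $\sigma_e$. Universality is immediate: given any left-c.e.\ semi-measure $\rho$, pick an index $e$ with $\rho_e = \rho$; then $\sigma_e = \rho$ and $M \ge 2^{-(e+1)} \rho$, so $\rho \le 2^{e+1} \cdot M$.

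The only delicate point is the truncation step, which is where one has to be careful that the procedure remains effective and respects the non-decreasing requirement even when the inequalities to be checked involve values at infinitely many descendants; the standard way around this is to only check and enforce the inequality $\rho(\sigma) \ge \rho(\sigma 0)+\rho(\sigma 1)$ at internal nodes, which is a single local condition per node, and to freeze the value at $\sigma$ whenever an attempted update would violate it. Everything else is bookkeeping.
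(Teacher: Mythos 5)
The paper does not prove this theorem; it credits Zvonkin and Levin, and the route the paper implicitly endorses is the one via Theorem~\ref{thm-MachinesInduceSemiMeasures}: the correspondence between Turing functionals and left-c.e.\ semi-measures means that a universal Turing functional $\Phi$ with $\Phi(1^e0X)\simeq\Phi_e(X)$ automatically induces a universal semi-measure, with no truncation needed because a functional's output is unconstrained. Your direct enumeration-plus-weighting plan is a legitimate alternative, and the outer structure (uniformly effective enumeration, weighted sum $\sum_e 2^{-(e+1)}\sigma_e$) is fine, but the truncation step as you describe it is broken.

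The false claim is that ``if $\rho_e$ was already a left-c.e.\ semi-measure the truncation is never triggered.'' Stage-$s$ approximations to a bona fide semi-measure need not satisfy the semi-measure inequality at any finite stage. Take $\rho_e=\lambda$ and the monotone approximation $\widetilde\rho_e(\sigma,s)=2^{-|\sigma|}\bigl(1-2^{-(s+|\sigma|)}\bigr)$, which converges to $\lambda$; yet for every $\sigma$ and every $s$,
\[
\widetilde\rho_e(\sigma 0,s)+\widetilde\rho_e(\sigma 1,s)=2^{-|\sigma|}\bigl(1-2^{-(s+|\sigma|+1)}\bigr)\;>\;2^{-|\sigma|}\bigl(1-2^{-(s+|\sigma|)}\bigr)=\widetilde\rho_e(\sigma,s).
\]
So the inequality you want to preserve is violated at every node at every stage, your check fails at every stage, and (whichever child of the root is processed second) its value is frozen at $0$ forever, giving $\sigma_e(b)=0\neq\tfrac12=\rho_e(b)$. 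This is not a corner case: whenever the semi-measure inequality at a node is tight in the limit, the approximations can violate it at every stage by a vanishing margin, and a greedy per-node freeze locks in permanent, unbounded loss — not even a constant fraction of $\rho_e$ survives, so universality fails. The truncation can be repaired (e.g.\ charge a child's committed value against the parent's budget as read from a strictly later stage of $\widetilde\rho_e$, with the delay tending to infinity, and argue that this recovers $\rho_e$ in the limit), but that argument requires real care about the tight case and is not present in your sketch; alternatively, follow the functional-based proof the paper points to, which sidesteps the issue entirely. A smaller remark: your handling of $M(\emptystr)=1$ by ``rescaling'' is not obviously effective since $M(\emptystr)$ is only left-c.e.; the simple fix is to just redefine $M(\emptystr):=1$, which can only loosen the root inequality.
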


For the rest of the article, fix a universal left-c.e.\ semi-measure $M$.  Then we define the \emph{a priori complexity} of $\sigma\in\str$, denoted $\KA(\sigma)$, to be $-\log M(\sigma)$. Notice that $\KA$ is monotonic with respect to the prefix order, that is, for $\sigma \prec \tau$ we have $\KA(\sigma) < \KA(\tau)$.

The following result provides a useful correspondence between Turing functionals and left-c.e.\ semi-measures.

\begin{thm}[Zvonkin and Levin~\cite{ZvoLev70}]\label{thm-MachinesInduceSemiMeasures}$ $
\begin{itemize}[noitemsep,topsep=0pt]
\item[(i)] For every Turing functional $\Phi$, the function defined by $\lambda_\Phi(\sigma)=\lambda(\llb\Phi^{-1}(\sigma)\rrb)
=\lambda(\{X\colon\Phi^X\succeq\sigma\})$ for every $\sigma\in\str$ is a left-c.e.\ semi-measure.
\item[(ii)] For every left-c.e.\ semi-measure $\rho$, there is a Turing functional $\Phi$ such that $\rho=\lambda_\Phi$. 
\end{itemize}
\end{thm}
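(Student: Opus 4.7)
Plan. Part~(i) is essentially a direct check from the definitions. We have $\lambda_\Phi(\emptystr)=\lambda(\cs)=1$ because the empty string prefixes every $\Phi^X$. For $\sigma\in\str$, the sets $\llb\Phi^{-1}(\sigma 0)\rrb$ and $\llb\Phi^{-1}(\sigma 1)\rrb$ are disjoint subsets of $\llb\Phi^{-1}(\sigma)\rrb$, since no $\Phi^X$ can extend both $\sigma 0$ and $\sigma 1$ while any extension of $\sigma i$ extends $\sigma$; additivity of $\lambda$ then yields the semi-measure inequality $\lambda_\Phi(\sigma)\geq\lambda_\Phi(\sigma 0)+\lambda_\Phi(\sigma 1)$. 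Finally, because $S_\Phi$ is c.e., the set $\{\sigma'\in\str\colon\exists\tau\succeq\sigma\,((\sigma',\tau)\in S_\Phi)\}$ is uniformly c.e.\ in $\sigma$, so $\llb\Phi^{-1}(\sigma)\rrb$ is a uniformly c.e.\ open set whose Lebesgue measure is a uniformly left-c.e.\ real, giving the required computable approximation from below.

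For part~(ii), the plan is a stagewise ``measure allocation'' construction. First, I would massage the given enumeration $\widetilde\rho$ into a computable dyadic approximation $\widehat\rho(\sigma,s)\in\binrat$ that already satisfies the semi-measure inequality at every stage, i.e.\ $\widehat\rho(\sigma,s)\geq\widehat\rho(\sigma 0,s)+\widehat\rho(\sigma 1,s)$, together with $\widehat\rho(\emptystr,s)=1$ and $\lim_s\widehat\rho(\sigma,s)=\rho(\sigma)$. This can be arranged top-down by truncating $\widetilde\rho(\sigma,s)$ against the parent's current allowance minus the sibling's current value, exploiting the fact that $\rho$ itself is a semi-measure in the limit.

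Inductively I would maintain, for each $\sigma$, a finite antichain $A_\sigma^s\subseteq\str$ of basic cylinders with $\lambda(\llb A_\sigma^s\rrb)=\widehat\rho(\sigma,s)$ and the nesting invariants $A_\emptystr^s=\{\emptystr\}$ and $\llb A_{\sigma 0}^s\rrb\sqcup\llb A_{\sigma 1}^s\rrb\subseteq\llb A_\sigma^s\rrb$. When $\widehat\rho(\sigma,s+1)$ exceeds $\widehat\rho(\sigma,s)$, I grow $A_\sigma^{s+1}$ by picking fresh cylinders inside $\llb A_{\text{parent}(\sigma)}^{s+1}\rrb\setminus(\llb A_\sigma^s\rrb\cup\llb A_{\text{sibling}(\sigma)}^{s+1}\rrb)$, refining them to sufficient depth so that their total measure matches the dyadic increment exactly; whenever a cylinder $\llb\sigma'\rrb$ is placed into $A_\sigma^{s+1}$, I enumerate $(\sigma',\sigma)$ into $S_\Phi$. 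The functional's consistency condition is automatic: if two added pairs $(\sigma'_1,\sigma_1),(\sigma'_2,\sigma_2)$ satisfy $\sigma'_1\preceq\sigma'_2$, then $\llb\sigma'_2\rrb\subseteq\llb\sigma'_1\rrb\cap\llb A_{\sigma_2}^{\,\,}\rrb$, forcing $\llb A_{\sigma_1}\rrb\cap\llb A_{\sigma_2}\rrb\neq\emptyset$ and hence $\sigma_1,\sigma_2$ comparable by the disjointness of the antichains at each level. Since $\llb\Phi^{-1}(\sigma)\rrb=\bigcup_s\llb A_\sigma^s\rrb$, we will then conclude $\lambda_\Phi(\sigma)=\lim_s\widehat\rho(\sigma,s)=\rho(\sigma)$.

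The main obstacle is proving that enough ``fresh'' measure is always available inside the parent's antichain when a child needs to expand. Concretely, one must verify
\[
\lambda(\llb A_{\text{parent}(\sigma)}^{s+1}\rrb)-\lambda(\llb A_{\text{sibling}(\sigma)}^{s+1}\rrb)-\lambda(\llb A_\sigma^s\rrb)\;\geq\;\widehat\rho(\sigma,s+1)-\widehat\rho(\sigma,s),
\]
which by the inductive invariant reduces to $\widehat\rho(\text{parent}(\sigma),s+1)\geq\widehat\rho(\text{sibling}(\sigma),s+1)+\widehat\rho(\sigma,s+1)$---precisely the stagewise semi-measure property of $\widehat\rho$. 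Processing updates in order of increasing depth at each stage ensures a parent's allotment has been raised before either child's is, and refining cylinders deeply enough allows any dyadic increment to be realized exactly; the rest of the verification is routine.
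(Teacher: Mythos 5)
The paper states this result as a citation to Zvonkin and Levin and gives no proof, so there is nothing in the text to compare against; I can only assess your argument on its own terms. Part~(i) is a routine verification and you handle it correctly: the two preimage cylinders for $\sigma 0,\sigma 1$ are disjoint subsets of the preimage for $\sigma$, and $\llb\Phi^{-1}(\sigma)\rrb$ is a uniformly c.e.\ open set, hence of left-c.e.\ measure. Part~(ii) is the standard ``measure allocation via nested antichains'' construction, and the key inequality you isolate, namely $\widehat\rho(\mathrm{parent},s+1)\ge\widehat\rho(\mathrm{sibling},s+1)+\widehat\rho(\sigma,s+1)$, is exactly what makes the bookkeeping go through; your consistency argument for $S_\Phi$ and the identity $\llb\Phi^{-1}(\sigma)\rrb=\bigcup_s\llb A_\sigma^s\rrb$ are also correct.

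Two points are worth flagging as slightly under-argued rather than wrong. First, your description of $\widehat\rho$ as obtained by ``truncating $\widetilde\rho(\sigma,s)$ against the parent's allowance minus the sibling's value'' does not by itself give a function that is non-decreasing in $s$: if the sibling's value rises faster than the parent's, the allowance shrinks. The fix is to take the running maximum (equivalently, to let $\widehat\rho(\sigma,s)$ simply \emph{be} $\lambda(\llb A_\sigma^s\rrb)$, which is monotone because antichains only grow), processing $\emptystr,0,1,00,01,\dots$ in that order at each stage. Second, once you do this, the claim $\lim_s\widehat\rho(\sigma,s)=\rho(\sigma)$ is no longer immediate; it needs a short inductive argument on $|\sigma|$: if $a=\lim_s\widehat\rho(\sigma 0,s)<\rho(\sigma 0)$, then eventually the allowance cap is binding, so $\rho(\sigma)-\lim_s\widehat\rho(\sigma 1,s)\le a$; combined with $\lim_s\widehat\rho(\sigma 1,s)\le\rho(\sigma 1)$ this gives $a\ge\rho(\sigma)-\rho(\sigma 1)\ge\rho(\sigma 0)$, a contradiction. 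With these two points made explicit, the argument is complete and coincides with the standard proof of the Zvonkin--Levin correspondence.
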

For a left-c.e.\ semi-measure $\rho$, if $\Phi$ is a Turing functional such that $\lambda_\Phi=\rho$, we say that $\Phi$~\emph{induces}~$\rho$.  It is not hard to verify that if $\Phi$ is a universal Turing functional, that is, $\Phi(1^e0X)\simeq\Phi_e(X)$ for every $e\in\omega$ and $X\in\cs$, then $\Phi$ induces the universal left-c.e.\ semi-measure.

It is straightforward to show that $\KA(\sigma)\leq \K(\sigma)+O(1)$ for every ${\sigma\in\str}$.  
Another useful inequality comparing $\K$~and~$\KA$, not explicitly proved elsewhere, is the following.

\begin{lem}\label{lem-KA-K}
For $\sigma,\tau\in\str$, $\KA(\sigma\tau)\leq \K(\sigma)+\KA(\tau)$.
\end{lem}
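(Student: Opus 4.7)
The plan is to use the correspondence between left-c.e.\ semi-measures and Turing functionals established in Theorem \ref{thm-MachinesInduceSemiMeasures} in order to construct a left-c.e.\ semi-measure $\rho$ satisfying $\rho(\sigma\tau) \geq 2^{-\K(\sigma)} M(\tau)$; the conclusion will then follow from the universality of $M$.

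First I would fix a Turing functional $\Phi$ with $\lambda_\Phi = M$, which exists by Theorem \ref{thm-MachinesInduceSemiMeasures}(ii), and let $U$ denote the universal prefix-free machine underlying the definition of $\K$, so that $\dom(U) \subseteq \str$ is prefix-free and $\K(\sigma) = \min\{|\pi| : U(\pi) = \sigma\}$. Then I would define a new Turing functional $\Psi$ intuitively as follows: on an oracle $X$, wait until some prefix $\pi$ of $X$ satisfies $U(\pi)\halts$, commit to outputting $U(\pi)$ as an initial segment, and then continue by running $\Phi$ on the suffix of $X$ beyond $\pi$. Formally, set
\[
S_\Psi \;:=\; \{\,(\pi\alpha,\; U(\pi)\beta) : \pi \in \dom(U),\; (\alpha,\beta) \in S_\Phi\,\}.
\]
The verification that $S_\Psi$ satisfies the monotonicity condition required of a Turing functional is the only mildly delicate step: if $\pi_1\alpha_1 \preceq \pi_2\alpha_2$ with both pairs in $S_\Psi$, then the prefix-freeness of $\dom(U)$ forces $\pi_1 = \pi_2$, whence $\alpha_1 \preceq \alpha_2$, and the monotonicity of $S_\Phi$ gives that $U(\pi_1)\beta_1$ and $U(\pi_2)\beta_2$ are comparable.

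With $\Psi$ in hand I would bound $\lambda_\Psi(\sigma\tau)$ from below. For each $\pi \in U^{-1}(\sigma)$, the set of oracles $X$ whose first $|\pi|$ bits spell $\pi$ and whose suffix produces a $\Phi$-extension of $\tau$ has Lebesgue measure exactly $2^{-|\pi|} M(\tau)$; every such $X$ satisfies $\Psi^X \succeq \sigma\tau$; and the contributions from distinct $\pi \in U^{-1}(\sigma)$ are pairwise Lebesgue-disjoint by prefix-freeness of $\dom(U)$. Summing over such $\pi$ yields
\[
\lambda_\Psi(\sigma\tau) \;\geq\; \sum_{\pi \in U^{-1}(\sigma)} 2^{-|\pi|} M(\tau) \;\geq\; 2^{-\K(\sigma)} M(\tau).
\]

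Finally, since $\lambda_\Psi$ is a left-c.e.\ semi-measure, the universality of $M$ supplies a constant $c$ with $\lambda_\Psi \leq c \cdot M$, hence $M(\sigma\tau) \geq c^{-1} \cdot 2^{-\K(\sigma)} M(\tau)$. Taking $-\log$ and absorbing the additive constant, as is standard for $\K$ and $\KA$, delivers $\KA(\sigma\tau) \leq \K(\sigma) + \KA(\tau)$. I do not anticipate any substantive obstacle: the entire argument is a single clean construction, with the prefix-freeness of $\dom(U)$ doing all the bookkeeping needed to keep $\Psi$ well defined.
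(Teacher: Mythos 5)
Your proof is correct and follows essentially the same route as the paper's: build a functional $\Psi$ that peels off a $U$-program for $\sigma$ from the oracle and then simulates a functional inducing $M$ on the remainder, bound $\lambda_\Psi(\sigma\tau)$ below, and invoke universality of $M$. The only cosmetic difference is that you sum over all of $U^{-1}(\sigma)$ while the paper restricts to the single shortest program $\sigma^*$, and you spell out the monotonicity check for $S_\Psi$, which the paper leaves implicit.
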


\begin{proof}
See Appendix \ref{appendix}.
\end{proof}

\subsection{Computable measures on $\cs$}
For $\sigma\in\str$, we let $\llb\sigma\rrb=\{X\in\cs\colon \sigma\prec X\}$; for $V\subseteq\str$, we let $\llb V\rrb=\bigcup_{\sigma\in\str}\llb\sigma\rrb$.  A measure $\mu$ on $\cs$ is \emph{computable} if $\sigma \mapsto \mu(\llb\sigma\rrb)$ is computable as a real-valued function, that is, if there is a computable function $\widetilde \mu\colon\str\times\omega\rightarrow\binrat$ such that $|\mu(\llb\sigma\rrb)-\widetilde \mu(\sigma,i)|\leq 2^{-i}$ for every $\sigma\in\str$ and $i\in\omega$.  From now on, we will write $\mu(\llb\sigma\rrb)$ as $\mu(\sigma)$; and similarly, for~$V \subseteq \str$, we write $\mu(\llb V\rrb)$ as $\mu(V)$. The Lebesgue measure on $\cs$ is denoted $\lambda$.
For a  measure $\mu$ on $\cs$, $X\in\cs$ is an \emph{atom} of $\mu$ (or a \emph{$\mu$-atom}), denoted $X\in\atoms_\mu$, if $\mu(\{X\})>0$.  We call $\mu$ \emph{atomic} if $\atoms_\mu\neq\emptyset$; otherwise $\mu$ is \emph{continuous}.

\begin{prop}[Kautz~\cite{Kau91}]\label{prop-kautz}
If $\mu$ is a computable measure and $X\in\atoms_\mu$, then $X$ is computable.  
\end{prop}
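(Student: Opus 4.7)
The plan is to exploit computability of $\mu$ and the positive atom mass $\alpha := \mu(\{X\}) > 0$ to identify the bits of $X$ one at a time via a c.e.\ enumeration, using only finite advice (which is absorbed into the description of the computable procedure). First I would fix a rational $r$ with $\alpha/2 < r < \alpha$, and an integer $n_0$ such that $\mu(X \uh n_0) < 2r$; both exist because the sequence $(\mu(X \uh n))_n$ is non-increasing and converges to $\alpha$.

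The key observation is the following gap fact: for every $n \geq n_0$ one has $\mu(X \uh n) \geq \alpha > r$ while $\mu(X \uh n) \leq \mu(X \uh n_0) < 2r$, so the ``non-$X$ child'' of $X \uh n$, that is, the string $\sigma = (X \uh n) i$ with $i \neq X(n)$, satisfies $\mu(\sigma) = \mu(X \uh n) - \mu(X \uh (n+1)) < 2r - \alpha < r$. In particular, $X \uh (n+1)$ has measure strictly above $r$ and its sibling has measure strictly below $r$.

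Using the pair $(r, X \uh n_0)$ as advice, I would then proceed inductively. Given $X \uh n$ for some $n \geq n_0$, because $\mu$ is computable the predicate $\mu(\sigma) < r$ is c.e.\ in $\sigma$; I simultaneously enumerate $\{\sigma \colon \mu(\sigma) < r\}$ and wait for one of $(X \uh n)0$ or $(X \uh n)1$ to appear. By the observation above, exactly one does appear (the sibling, in finite time), while the other has measure at least $\alpha > r$ and never appears. The child that does not appear must be $X \uh (n+1)$, so $X(n)$ is output computably, and iterating yields $X$. The only subtle point is arranging $r$ strictly between $\alpha/2$ and $\alpha$ so that the separation $\alpha - r > 0$ produces a uniform positive gap between the measures of ``$X$-prefixes'' and their siblings; once this is set up, no priority or limit-approximation machinery beyond the computability of $\mu$ is required.
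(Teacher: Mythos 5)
Your proof is correct, and it reconstructs the standard argument (which the paper does not reproduce, citing it to Kautz instead): since $\mu(X\uh n)\to\mu(\{X\})=\alpha>0$, beyond some $n_0$ the $X$-child of each $X\uh n$ has $\mu$-measure at least $\alpha$ while its sibling has measure below some fixed rational $r<\alpha$, and computability of $\mu$ makes the predicate $\mu(\sigma)<r$ c.e., so the sibling can be recognized and $X(n)$ output effectively. This is essentially the proof Kautz gives, so there is nothing further to compare.
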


If $\mu$ is a computable measure on $\cs$, then $\Phi$ is called {\em $\mu$-almost total} if $\mu(\dom(\Phi))=1$.  The correspondence given by Theorem \ref{thm-MachinesInduceSemiMeasures} can be extended to $\lambda$-almost total Turing functionals and computable measures.

\begin{samepage}
\begin{thm} \label{thm-measures-tf}\quad
\begin{itemize}[noitemsep,topsep=0pt]
\item[(i)] If $\mu$ is a computable measure and $\Phi$ is a $\mu$-almost total Turing functional, then the function $\mu_\Phi$ defined by ${\mu_\Phi(\X)=\mu(\Phi^{-1}(\X))}$ for all Borel $\X\subseteq\cs$ is a computable measure.
\item[(ii)] For every computable measure $\mu$, there is a $\lambda$-almost total Turing functional $\Phi$ such that $\mu=\lambda_\Phi$.
\end{itemize}
\end{thm}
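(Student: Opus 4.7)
The plan is to prove the two parts separately. For part~(i), $\mu_\Phi$ is a measure on Borel subsets of $\cs$ by standard abstract measure theory (preimages commute with countable unions and intersections and preserve disjointness), so the substantive claim is computability. For each $\sigma\in\str$,
\[
\Phi^{-1}(\llb\sigma\rrb) \;=\; \{X\in\cs : \Phi^X\in\llb\sigma\rrb\}
\;=\; \llb\Phi^{-1}(\sigma)\rrb \cap \dom(\Phi),
\]
where on the right $\Phi^{-1}(\sigma)$ is the c.e.\ string-set from the excerpt. Since $\mu(\cs\setminus\dom(\Phi))=0$ by $\mu$-almost totality, $\mu_\Phi(\sigma)=\mu(\llb\Phi^{-1}(\sigma)\rrb)$, and this is uniformly left-c.e.\ in $\sigma$ because $\Phi^{-1}(\sigma)$ is c.e.\ and $\mu$ is computable. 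To promote this to computability I would exploit that for every $n$ the preimages $\Phi^{-1}(\llb\sigma\rrb)$ with $|\sigma|=n$ partition $\dom(\Phi)$, so $\sum_{|\sigma|=n}\mu_\Phi(\sigma)=\mu(\dom(\Phi))=1$. Each value $\mu_\Phi(\sigma)$ at level $n$ is then also right-c.e., as $1$ minus a finite sum of the remaining level-$n$ values; a uniform $2^{-k}$-approximation is obtained by approximating every $\mu_\Phi(\tau)$ at level $n$ from below until the approximations sum to more than $1-2^{-k}$.

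For part~(ii), I would use the classical ``inverse CDF'' construction. Associate to every $\sigma\in\str$ an interval $I_\sigma=[a_\sigma,b_\sigma)\subseteq[0,1)$ with $b_\sigma-a_\sigma=\mu(\sigma)$, by setting $I_\emptystr=[0,1)$ and recursively defining $I_{\sigma 0}=[a_\sigma,a_\sigma+\mu(\sigma 0))$ and $I_{\sigma 1}=[a_\sigma+\mu(\sigma 0),b_\sigma)$; the endpoints $a_\sigma,b_\sigma$ are then computable reals uniformly in $\sigma$. Interpret $X\in\cs$ as the real $r_X=\sum_n X(n)2^{-(n+1)}\in[0,1]$, and define $\Phi$ by enumerating $(\rho,\sigma)$ into $S_\Phi$ whenever rational approximations witness that the dyadic interval $[r_\rho,r_\rho+2^{-|\rho|})$ sits strictly inside $I_\sigma$. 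This is monotone in both coordinates, giving a legitimate Turing functional.

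Correctness reduces to the following observations. The set of $X$ for which $r_X$ ever equals some $a_\sigma$ or $b_\sigma$ is countable and hence $\lambda$-null; outside this set, $r_X$ lies strictly inside exactly one of the two candidate $I_{\sigma b}$ at each level, so the corresponding axiom is eventually enumerated and $\Phi^X$ is infinite, yielding $\lambda(\dom(\Phi))=1$. For each $\sigma$, $\Phi^{-1}(\llb\sigma\rrb)$ equals $\{X:r_X\in I_\sigma\}$ up to a $\lambda$-null set, and since $X\mapsto r_X$ pushes $\lambda$ on $\cs$ forward to Lebesgue measure on $[0,1]$, one gets $\lambda_\Phi(\sigma)=b_\sigma-a_\sigma=\mu(\sigma)$. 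The main technical subtlety, and the piece most likely to require care, is that the endpoints $a_\sigma,b_\sigma$ are only computable reals rather than rationals, so the enumeration predicate ``strictly inside $I_\sigma$'' must be phrased in terms of rational upper and lower bounds arising from the computable approximation to $\mu$; the degenerate case $\mu(\sigma)=0$ is then harmless, since $I_\sigma$ is empty and no axiom with second coordinate extending $\sigma$ ever fires.
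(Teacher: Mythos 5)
The paper does not give a proof of Theorem~\ref{thm-measures-tf}: it is stated without attribution as a folklore extension of the Zvonkin--Levin correspondence (Theorem~\ref{thm-MachinesInduceSemiMeasures}) to $\mu$-almost total functionals and computable measures, so there is no in-paper argument to compare against. Your proposal is correct and is essentially the standard proof. In part~(i) the key observation is exactly the one you isolate: each $\mu_\Phi(\sigma)$ is left-c.e.\ uniformly, and since for each $n$ the sets $\Phi^{-1}(\llb\sigma\rrb)$ with $|\sigma|=n$ partition $\dom(\Phi)$ up to a $\mu$-null set, the values at level $n$ sum to $1$, which upgrades left-c.e.\ to computable uniformly (this is where $\mu$-almost totality is used, and without it one only gets a left-c.e.\ semi-measure, which is part~(i) of Theorem~\ref{thm-MachinesInduceSemiMeasures}). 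In part~(ii) the inverse-CDF construction with the interval assignment $I_\sigma$ and the ``dyadic interval strictly inside $I_\sigma$'' enumeration predicate is the standard one; you correctly note that $S_\Phi$ is consistent because overlapping $I_\sigma$, $I_{\sigma'}$ force $\sigma$ and $\sigma'$ to be comparable, that the exceptional set of $X$ with $r_X$ hitting an interval endpoint is countable and hence $\lambda$-null, and that the enumeration must be phrased via rational bounds since the $a_\sigma,b_\sigma$ are only computable reals. The one cosmetic remark worth making is that $\lambda_\Phi(\sigma)=\lambda(\{X:\Phi^X\succeq\sigma\})$ and $\lambda(\{X:r_X\in I_\sigma\})$ agree only up to the null exceptional set, which is enough since Lebesgue measure is non-atomic; you say this implicitly but a reader would want it stated.
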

\end{samepage}

\subsection{Martin-L\"of randomness} 
Let $\mu$ be a computable measure on $\cs$.  Recall that a \emph{$\mu$-Martin-L\"of test} is a sequence of uniformly $\Sigma^0_1$ classes $(\U_i)_{i\in\omega}$ such that $\mu(\U_i)\leq 2^{-i}$ for every $i\in\omega$. Moreover, $X\in\cs$ \emph{passes} a $\mu$-Martin-L\"of test $(\U_i)_{i\in\omega}$ if $X\notin\bigcap_{i\in\omega}\U_i$, and 
$X\in\cs$ is \emph{$\mu$-Martin-L\"of random}, denoted $X\in\MLR_\mu$, if $X$ passes every $\mu$-Martin-L\"of test.  We will write $\MLR_\lambda$ simply as $\MLR$.

\begin{defn}[Zvonkin and Levin~\cite{ZvoLev70}]
We call a sequence $X\in\cs$ {\em proper} if there exists a computable measure $\mu$ such that $X\in\MLR_\mu$. 
\end{defn}

As discussed in the introduction, the Levin-Schnorr Theorem plays a central role in this article.

\begin{thm}[Levin~\cite{Lev74}; Schnorr, see Chaitin~\cite{Cha75}]\label{thm-levin-schnorr}
$X\in\cs$ is Martin-L\"of random with respect to a computable measure~$\mu$ if and only if $
(\exists c)(\forall n)\;\K(X\uh n)\geq -\log(\mu(X\uh n))-c.$
\end{thm}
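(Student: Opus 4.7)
The plan is to prove both implications by contrapositive, exploiting the duality between prefix-free Kolmogorov complexity and Kraft-type sums.

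For the easy direction (complexity bound implies randomness), I assume $X$ fails the bound for every constant and build a $\mu$-Martin-L\"of test that captures $X$. Define
\[ \U_c = \llb \{\sigma\in\str \colon \K(\sigma) < -\log\mu(\sigma) - c\} \rrb \]
for each $c\in\omega$. These are uniformly $\Sigma^0_1$ and contain $X$ by hypothesis. Letting $S_c$ be the prefix-minimal strings $\sigma$ with $\K(\sigma) < -\log\mu(\sigma) - c$, the set $S_c$ is prefix-free with $\llb S_c\rrb = \U_c$, so
\[ \mu(\U_c) \;=\; \sum_{\sigma\in S_c}\mu(\sigma) \;<\; 2^{-c}\sum_{\sigma\in S_c}2^{-\K(\sigma)} \;\leq\; 2^{-c}, \]
where the last step is Kraft's inequality applied to the prefix-free descriptions produced by the underlying optimal machine. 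Hence $(\U_c)_{c\in\omega}$ is a $\mu$-Martin-L\"of test that $X$ fails.

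For the hard direction (randomness implies complexity bound), I again argue contrapositively. Given a $\mu$-Martin-L\"of test $(\U_i)_{i\in\omega}$ with $X \in \bigcap_i \U_i$, I build short descriptions for prefixes of $X$. Let $V_i \subseteq \str$ be a prefix-free c.e.\ set with $\llb V_i\rrb = \U_i$, so $\mu(V_i) \leq 2^{-i}$. For each $i \geq 1$ and each $\sigma \in V_{2i}$, submit a request for a description of $\sigma$ of length $\lceil-\log\mu(\sigma)\rceil - i$. Computability of $\mu$ makes the request stream effective, and the total Kraft weight is
\[ \sum_{i\geq 1}\sum_{\sigma\in V_{2i}} 2^{\,i-\lceil-\log\mu(\sigma)\rceil} \;\leq\; \sum_{i\geq 1} 2^{i}\,\mu(V_{2i}) \;\leq\; \sum_{i\geq 1} 2^{-i} \;=\; 1. \]
The Kraft--Chaitin (machine existence) theorem therefore yields a prefix-free machine $M$ honoring every request, and optimality of the universal machine $U$ gives $\K(\sigma) \leq -\log\mu(\sigma) - i + O(1)$ for every $\sigma \in V_{2i}$. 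Since $X \in \U_{2i}$, some prefix $X\uh n$ lies in $V_{2i}$, and because $i$ is arbitrary the complexity bound is violated by an arbitrarily large constant, as required.

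The main obstacle is selecting the right request lengths in the hard direction: one must subtract enough from $-\log\mu(\sigma)$ to witness a genuine complexity deficiency, but not so much that the Kraft weight diverges. Using the level-$2i$ sets of the test to save $i$ bits is what supplies the extra geometric factor $2^{-i}$ needed for summability, and the rest is bookkeeping inside Kraft--Chaitin. The easy direction is essentially a direct Kraft calculation, and the computability of $\mu$ is what makes the covering test c.e.\ in one direction and the request stream c.e.\ in the other.
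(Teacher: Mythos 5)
The paper does not actually prove Theorem~\ref{thm-levin-schnorr}; it is stated as a known result and attributed to Levin and to Schnorr (via Chaitin), so there is no in-paper argument to compare against. Your proof is the standard textbook argument and it is correct: in one direction, the cylinders of strings with complexity deficiency $c$ form a uniformly $\Sigma^0_1$ $\mu$-ML test by the Kraft inequality applied to the domain of the universal prefix-free machine; in the other, a test capturing $X$ lets you feed requests of length roughly $-\log\mu(\sigma)-i$ on the level-$2i$ covers into Kraft--Chaitin, with $\mu(\U_{2i})\leq 2^{-2i}$ supplying exactly the geometric slack needed for summability (and also guaranteeing that the requested lengths stay positive, since $\mu(\sigma)\leq 2^{-2i}$ forces $-\log\mu(\sigma)\geq 2i$). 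One small slip worth fixing: the parenthetical labels of the two directions are swapped. Your first argument assumes $\neg(\exists c\,\forall n\,\K(X\uh n)\geq -\log\mu(X\uh n)-c)$ and derives $X\notin\MLR_\mu$, which is the contrapositive of ``random $\Rightarrow$ complexity bound,'' not ``complexity bound $\Rightarrow$ random''; dually for the second argument. The mathematics is fine as written, only the headings are reversed.
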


\noindent Levin~\cite{Lev73} also proved a version of Theorem~\ref{thm-levin-schnorr} with $\KA$ in place of $\K$. The following results will  be particularly useful.

\begin{thm}[Kurtz~\cite{Kur81}]\label{thm-kucera}
If $\mu$ is a computable measure and $\P$ is a $\Pi^0_1$ class such that $\mu(\P)=0$, then $\MLR_\mu\cap\P=\emptyset$.
\end{thm}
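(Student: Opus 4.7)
The plan is to witness $\P$ with a $\mu$-Martin-L\"of test. Since $\P$ is a $\Pi^0_1$ class, fix a computable tree $T\subseteq\str$ with $[T]=\P$, and for each $n$ let $T_n=\{\sigma\in T:|\sigma|=n\}$. Then the clopen sets $\llb T_n\rrb$ form a decreasing sequence with $\bigcap_n\llb T_n\rrb=\P$, so by continuity of $\mu$ from above we have $\mu(\llb T_n\rrb)\searrow\mu(\P)=0$.

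Next I would use the computability of $\mu$ to thin this sequence into a test. Because $\mu$ is computable and $T_n$ is a finite, uniformly computable antichain, the real $\mu(\llb T_n\rrb)=\sum_{\sigma\in T_n}\mu(\sigma)$ is uniformly computable in $n$ to any desired precision. Hence, for each $i\in\omega$, we can search effectively for the least $n_i$ such that an approximation to $\mu(\llb T_{n_i}\rrb)$ within $2^{-i-2}$ certifies $\mu(\llb T_{n_i}\rrb)<2^{-i}$; such an $n_i$ exists by the previous paragraph. Setting $\U_i:=\llb T_{n_i}\rrb$ yields a uniformly $\Sigma^0_1$ (in fact uniformly clopen) sequence with $\mu(\U_i)<2^{-i}$ and $\P\subseteq\U_i$ for every $i$, that is, a $\mu$-Martin-L\"of test capturing every element of $\P$. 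Therefore $\P\subseteq\bigcap_i\U_i$ and $\MLR_\mu\cap\P=\emptyset$.

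The argument is essentially routine once the two ingredients are in place: the clopen superapproximations from the $\Pi^0_1$ representation and the ability to compute their $\mu$-measures. The only mildly delicate point is the effective selection of $n_i$: one cannot decide $\mu(\llb T_n\rrb)<2^{-i}$ directly, but computing $\mu(\llb T_n\rrb)$ to sufficient precision and using a strict rational upper bound circumvents this. No priority argument or appeal to a universal test is needed, which is why the result holds uniformly in (an index for) $\P$ and (an index for) $\mu$.
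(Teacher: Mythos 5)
Your proof is correct, and it is the standard argument; the paper itself cites this result to Kurtz without supplying a proof. The key points are all handled properly: the clopen sets $\llb T_n\rrb$ are nested and decrease to $\P$ because $T$ is a tree, continuity from above for the finite measure $\mu$ gives $\mu(\llb T_n\rrb)\to 0$, the measures of the uniformly computable antichains $T_n$ are uniformly computable reals, and the effective search for $n_i$ is organized so as to sidestep the undecidability of the strict inequality $\mu(\llb T_n\rrb)<2^{-i}$ by using a strict rational upper bound with slack $2^{-i-2}$. Your closing remark about uniformity in indices for $\P$ and $\mu$ is also accurate and worth having: since no universal test is invoked, the test $(\U_i)_{i\in\omega}$ is obtained effectively from the given data.
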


\begin{thm}[Randomness preservation]\label{thm-rand-pres}
Let $\mu$ be a computable measure and let $\Phi$ be a $\mu$-almost total Turing functional.  Then  $X\in\MLR_\mu$ implies $\Phi(X)\in\MLR_{\mu_\Phi}$.
\end{thm}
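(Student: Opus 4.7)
The plan is to argue by contrapositive: assume $\Phi(X) \notin \MLR_{\mu_\Phi}$ and use the test witnessing this to build a $\mu$-Martin-L\"of test capturing $X$, contradicting $X \in \MLR_\mu$. Concretely, suppose $(\V_i)_{i \in \omega}$ is a $\mu_\Phi$-Martin-L\"of test with $\Phi(X) \in \bigcap_i \V_i$. I would then set
\[
\U_i \;=\; \Phi^{-1}(\V_i) \;=\; \{Y \in \dom(\Phi)\colon \Phi(Y) \in \V_i\}
\]
and verify that $(\U_i)_{i \in \omega}$ is a $\mu$-Martin-L\"of test in which $X$ lies.

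The first step is to check that the $\U_i$ are uniformly $\Sigma^0_1$. Writing $\V_i = \llb W_i \rrb$ for a uniformly c.e.\ family $(W_i)_{i \in \omega}$ of strings, the definition of a Turing functional from the excerpt gives
\[
\U_i \;=\; \bigcup_{\tau \in W_i} \llb \Phi^{-1}(\tau) \rrb,
\]
where $\Phi^{-1}(\tau) \subseteq \str$ is c.e.\ uniformly in $\tau$ because $S_\Phi$ is c.e. Hence $\U_i$ is uniformly effectively open. The second step is the measure bound: by the definition of $\mu_\Phi$ in Theorem~\ref{thm-measures-tf}(i), we have $\mu(\U_i) = \mu(\Phi^{-1}(\V_i)) = \mu_\Phi(\V_i) \leq 2^{-i}$, where we tacitly use that $\V_i$ is a Borel subset of $\cs$ so that the defining equation of $\mu_\Phi$ applies.

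Finally, since $X \in \dom(\Phi)$ (note $X \in \MLR_\mu$ forces $X \in \dom(\Phi)$ because $\dom(\Phi)$ is a $\Sigma^0_2$ set of $\mu$-measure one, or more directly because $\dom(\Phi)^c$ is contained in a $\mu$-nullset that can be covered by a $\mu$-Martin-L\"of test) and $\Phi(X) \in \bigcap_i \V_i$, we have $X \in \bigcap_i \U_i$, contradicting $X \in \MLR_\mu$.

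The only real subtlety I anticipate is the last parenthetical point: making sure $X \in \dom(\Phi)$ so that ``$\Phi(X)$'' makes sense. This is handled by observing that $\dom(\Phi)^c$ is a $\mu$-nullset that is $\Pi^0_2$, and its complement within any neighborhood can be effectively covered with arbitrarily small $\mu$-measure, which produces a $\mu$-Martin-L\"of test capturing all of $\dom(\Phi)^c$; thus no $\mu$-random sequence lies outside $\dom(\Phi)$. Everything else reduces to the direct pullback argument above.
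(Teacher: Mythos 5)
The paper itself does not include a proof of this theorem; it defers to Bienvenu and Porter. Your pullback argument is the standard proof of randomness preservation, and it is essentially correct. Two small points of precision are worth fixing, though neither is a fatal gap.

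First, the set equality
\[
\Phi^{-1}(\V_i) \;=\; \{Y \in \dom(\Phi)\colon \Phi(Y) \in \V_i\} \;=\; \bigcup_{\tau \in W_i} \llb \Phi^{-1}(\tau) \rrb
\]
is not quite right: the open set on the right also contains sequences $Y \notin \dom(\Phi)$ for which $\Phi^Y$ is a finite string extending some $\tau \in W_i$. You should take $\U_i$ to be the open set $\bigcup_{\tau \in W_i} \llb \Phi^{-1}(\tau) \rrb$ (this is what makes $\U_i$ uniformly $\Sigma^0_1$), and then observe that $\U_i$ and $\{Y \in \dom(\Phi) : \Phi(Y) \in \V_i\}$ differ only by a subset of $\dom(\Phi)^c$, which has $\mu$-measure $0$ because $\Phi$ is $\mu$-almost total. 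That is exactly what gives $\mu(\U_i) = \mu_\Phi(\V_i) \le 2^{-i}$, so the measure bound survives, but the reasoning should be stated rather than glossed over via a false set identity.

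Second, the arithmetic-hierarchy labels in your final paragraph are swapped: $\dom(\Phi) = \bigcap_n \bigcup_{|\tau| = n} \llb \Phi^{-1}(\tau) \rrb$ is $\Pi^0_2$, and so $\dom(\Phi)^c$ is $\Sigma^0_2$, not $\Pi^0_2$ (and earlier you call $\dom(\Phi)$ itself $\Sigma^0_2$). The cleanest way to get $X \in \dom(\Phi)$ is not to build a whole Martin-L\"of test covering $\dom(\Phi)^c$, but to note that each set $A_n = \bigcup_{|\tau| = n} \llb \Phi^{-1}(\tau) \rrb$ contains $\dom(\Phi)$ and is $\Sigma^0_1$, so $A_n^c$ is a $\Pi^0_1$ class of $\mu$-measure $0$; Theorem~\ref{thm-kucera} then gives $\MLR_\mu \cap A_n^c = \emptyset$ for each $n$, hence $\MLR_\mu \subseteq \bigcap_n A_n = \dom(\Phi)$. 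With these two adjustments the proof is complete.
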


For a proof of this result, see, for instance, Bienvenu and Porter~\cite{BiePor12}.

\subsection{Complex sequences}\label{sec:basic-prop}

A function $f\colon\omega\rightarrow\omega$ is an \emph{order} if $f$ is unbounded and non-decreasing.  In the sequel, we will require that our orders~$g$ satisfy $g(0)=0$.  For an order $g$, we define $g^{-1}(n)=\min\{k\colon g(k)\geq n\}$. Notice that $g^{-1}$ is itself an order.

\begin{lem}\label{lem:gran0}
Let $f$ and $g$ be orders.
\begin{itemize}[noitemsep,topsep=0pt]
\item[(i)] If there is some $c\in\omega$ such that $g(n)\leq f(n)<g(n+c)$ for every $n$, then for every $k$, $g^{-1}(k)-c\leq f^{-1}(k)\leq g^{-1}(k)$.

\item[(ii)] If there is some $c\in\omega$ such that $f(n)\leq g(n)+c$ for every $n$, then $g^{-1}(k)\leq f^{-1}(k+c)$ for every $k$.
\end{itemize}
\end{lem}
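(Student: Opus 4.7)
The plan is to unpack the definition of $g^{-1}$ directly and manipulate the hypothesized pointwise bounds. The key definitional fact I would exploit is that for any order $h$ and any $k$, writing $m = h^{-1}(k)$ gives $h(m) \geq k$, while for any $n < h^{-1}(k)$ we have $h(n) < k$. So every inequality between $h^{-1}$ values reduces to producing an $n$ with the appropriate value of $h(n)$.

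For part~(i), I would handle the upper bound first. Let $n = g^{-1}(k)$, so $g(n) \geq k$. The hypothesis $g(n) \leq f(n)$ immediately gives $f(n) \geq k$, hence $f^{-1}(k) \leq n = g^{-1}(k)$. For the lower bound, let $n = f^{-1}(k)$, so $f(n) \geq k$. The right half of the hypothesis, $f(n) < g(n+c)$, yields $g(n+c) > k$, which in particular gives $g(n+c) \geq k$, and therefore $g^{-1}(k) \leq n + c = f^{-1}(k) + c$. Rearranging gives $g^{-1}(k) - c \leq f^{-1}(k)$.

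For part~(ii), I would argue symmetrically: let $m = f^{-1}(k+c)$, so $f(m) \geq k + c$. Applying the hypothesis $f(m) \leq g(m) + c$ yields $g(m) \geq k$, so by definition of $g^{-1}$ we conclude $g^{-1}(k) \leq m = f^{-1}(k+c)$.

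I do not expect any real obstacle; both parts are formal manipulations from the definition of the inverse order. The only mild subtlety is noticing in part~(i) that a strict inequality $g(n+c) > k$ between integers is already enough to conclude $g(n+c) \geq k$, so no loss of an extra constant is incurred.
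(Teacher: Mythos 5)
Your proof is correct and follows essentially the same route as the paper's: both parts are proved by unpacking the definition of the inverse order and applying the pointwise hypotheses in the same way. No meaningful differences.
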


\begin{proof}
See Appendix \ref{appendix}.
\end{proof}

As an immediate consequence of Lemma \ref{lem:gran0}(i), we have:

\begin{corollary}\label{cor:order-dom}
If $g$ is an order that dominates all computable orders, then $g^{-1}$ is an order that is dominated by all computable orders.
\end{corollary}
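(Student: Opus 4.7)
The plan is to show: for every computable order $\phi$, one has $g^{-1}(n) \leq \phi(n)$ for almost all $n$; since $g^{-1}$ is already an order by the observation preceding the corollary, this suffices. The strategy is to transfer the \emph{lower} bound on $g$ supplied by the hypothesis into the desired \emph{upper} bound on $g^{-1}$ by introducing a computable order $h$ whose inverse is exactly $\phi$, then invoking the monotonicity of the inverse operation, which is essentially the right-hand inequality in the conclusion of Lemma \ref{lem:gran0}(i).

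Given a computable order $\phi$, I would define $h\colon\omega\to\omega$ by $h(m)=\max\{n\colon \phi(n)\leq m\}$. This is well-defined because our convention forces $\phi(0)=0\leq m$, so the set is non-empty, while $\phi$ being unbounded forces it to be finite. One checks that $h$ is itself a computable order: it is non-decreasing and unbounded (the latter since $\phi$ is), and computable because $h(m)$ can be obtained by searching for the least $k$ with $\phi(k)>m$ and returning $k-1$. A short calculation then gives $h^{-1}(n)=\phi(n)$: $h(k)\geq n$ holds iff some $j\geq n$ satisfies $\phi(j)\leq k$, which, by monotonicity of $\phi$, is equivalent to $\phi(n)\leq k$.

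Because $h$ is a computable order, the hypothesis on $g$ yields some $M$ with $g(m)\geq h(m)$ for all $m\geq M$. For any $n$ with $\phi(n)\geq M$---which holds for almost all $n$ since $\phi$ is unbounded---we obtain $g(\phi(n))\geq h(\phi(n))\geq n$, so that $\phi(n)$ lies in $\{k\colon g(k)\geq n\}$ and hence $g^{-1}(n)\leq \phi(n)$. As $\phi$ was arbitrary, $g^{-1}$ is dominated by every computable order.

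The main conceptual step is locating the right $h$; once one realises that one should look for a computable order whose inverse is $\phi$, everything else is routine bookkeeping. I therefore do not expect any real obstacle beyond verifying the three elementary properties of $h$ and keeping track of how ``eventually in $m$'' translates to ``eventually in $n$'' via the unboundedness of $\phi$.
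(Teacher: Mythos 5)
Your argument is essentially correct, and since the paper dismisses this corollary as an ``immediate consequence of Lemma~\ref{lem:gran0}(i)'' without writing out a proof, your route of explicitly constructing a computable $h$ with $h^{-1}=\phi$ and then invoking the order-reversing property of inversion (which is the first half of the proof of Lemma~\ref{lem:gran0}(i)) is a reasonable way to make that claim precise. The step the paper elides, and that you correctly supply, is that every computable order arises, up to the relevant comparison, as the inverse of some computable order.

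There is one small oversight worth flagging. The paper's convention requires orders to satisfy $h(0)=0$, and your $h(m)=\max\{n\colon\phi(n)\leq m\}$ need not: for instance, if $\phi(0)=\phi(1)=0$ then $h(0)=1$. Consequently you cannot, strictly speaking, invoke the hypothesis ``$g$ dominates all computable orders'' directly on $h$. The repair is routine: replace $h$ by the computable order $h'$ defined by $h'(0)=0$ and $h'(m)=h(m)+1$ for $m\geq1$; this is non-decreasing, unbounded, computable, satisfies $h'(0)=0$, and majorizes $h$ on $m\geq1$, so $g$ dominating $h'$ still gives $g(m)\geq h(m)$ for all sufficiently large $m$, after which the rest of your argument proceeds unchanged. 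Everything else --- the verification that $h$ is computable by searching for the least $k$ with $\phi(k)>m$, the identity $h^{-1}=\phi$, and the chain $g(\phi(n))\geq h(\phi(n))\geq n$ yielding $g^{-1}(n)\leq\phi(n)$ for all large $n$ --- is sound.
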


$X\in\cs$ is \emph{complex} if there is some computable order~$f\colon\omega\rightarrow\omega$ such that $\K(X\uh n)\geq f(n)$ for all $n$. Similarly $X\in\cs$ is \emph{autocomplex} if there is some $X$-computable order~$f\colon\omega\rightarrow\omega$ such that $\K(X\uh n)\geq f(n)$ for all $n$.  As shown by Kjos-Hanssen et al.~\cite{KjoMerSte11}, it is straightforward to show that $X\in\cs$ is complex (resp.\ autocomplex) if and only if there is some computable (resp.\ $X$-computable) order~$f$ such that $\K(X\uh f(n))\geq n$.

While we focus almost exclusively on complexity in this article, the following is worth noting:

\begin{prop}\label{prop-proper-autocomplex}
If $X\in\cs$ is proper and non-computable, then $X$ is autocomplex.
\end{prop}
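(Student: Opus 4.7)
The plan is to combine the Levin--Schnorr theorem with Kautz's Proposition \ref{prop-kautz} to manufacture an $X$-computable lower bound on $\K(X\uh n)$.

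Fix a computable measure $\mu$ with $X\in\MLR_\mu$. By Theorem \ref{thm-levin-schnorr} there is a constant $c$ such that $\K(X\uh n)\geq -\log\mu(X\uh n)-c$ for every $n$. Because $X$ is non-computable, Proposition \ref{prop-kautz} gives $X\notin\atoms_\mu$, that is, $\mu(\{X\})=0$. Since the cylinders $\llb X\uh n\rrb$ are nested and decrease to $\{X\}$, continuity of $\mu$ from above yields $\mu(X\uh n)\searrow 0$, and hence $-\log\mu(X\uh n)\to\infty$.

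Next, I would produce an integer-valued $X$-computable approximation from below of $-\log\mu(X\uh n)$. Since $\mu$ is computable, from $X\uh n$ one can compute a dyadic rational upper bound $q_n\geq\mu(X\uh n)$ as tight as one wishes; this halts because $\mu(X\uh n)>0$ for every $n$ (otherwise $\llb X\uh n\rrb$ would be a $\mu$-null $\Sigma^0_1$ class, contradicting $X\in\MLR_\mu$ via Theorem \ref{thm-kucera}). Taking $q_n$ small enough that $q_n\leq 2\mu(X\uh n)$, set $g(n):=\lfloor-\log q_n\rfloor$, so $g$ is $X$-computable with $g(n)\leq -\log\mu(X\uh n)$ and $g(n)\to\infty$.

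The function $g$ need not be monotone because of approximation fluctuations, so I would smooth it by setting
\[
f(n):=\max\!\bigl(0,\ \max_{m\leq n}g(m)-c\bigr).
\]
Because $\mu$ is a measure, $-\log\mu(X\uh k)$ is non-decreasing in $k$, so $\max_{m\leq n}g(m)\leq -\log\mu(X\uh n)$, and consequently the Levin--Schnorr bound yields $\K(X\uh n)\geq -\log\mu(X\uh n)-c\geq f(n)$ for every $n$. By construction $f$ is $X$-computable, non-decreasing, unbounded, and $f(0)=0$, so $f$ is an $X$-computable order witnessing that $X$ is autocomplex. No real obstacle arises beyond recognizing that the monotonicity of $-\log\mu(X\uh n)$ in $n$ lets a running maximum absorb the approximation noise in $g$.
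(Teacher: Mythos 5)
Your proof is correct and follows the same core route as the paper's: apply Levin--Schnorr to get $\K(X\uh n)\geq -\log\mu(X\uh n)-c$ and observe that the right-hand side is $X$-computable. The paper's own argument stops there, tacitly leaving to the reader the conversion of this real-valued bound into an $X$-computable integer-valued order; you supply exactly those elided steps -- invoking Kautz's Proposition~\ref{prop-kautz} to guarantee $\mu(\{X\})=0$ (hence unboundedness of the bound), arguing that $\mu(X\uh n)>0$ so that the approximation search halts, and taking a running maximum to enforce monotonicity. One tiny slip: $f(0)=\max(0,g(0)-c)$ need not equal $0$ as you assert; but replacing $f$ by $n\mapsto\max(0,f(n)-f(0))$ fixes this without affecting anything else.
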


\begin{proof}
Let $X\in\cs$ be a non-computable sequence, and suppose that $X\in\MLR_\mu$ for some computable measure $\mu$.  By Theorem \ref{thm-levin-schnorr}, we have $\K(X\uh n)\geq -\log(\mu(X\uh n))-c$ for some $c$.  Since the function $n\mapsto -\log(\mu(X\uh n))-c$ is $X$-computable, it follows that $X$ is autocomplex. 
\end{proof}

\begin{remark}\label{rmk-proper-converse}
The converse of Proposition \ref{prop-proper-autocomplex} does not hold: Miller~\cite{Mil11} constructed an~$X \in \cs$ of effective Hausdorff dimension $1/2$ which does not compute any sequence of higher effective Hausdorff dimension. Then $X$~is clearly complex (and thus autocomplex), but if $X$~computed any non-computable proper sequence it would compute a $Y \in \MLR$ by a result of Zvonkin and Levin~\cite{ZvoLev70} and Kautz~\cite[Theorem IV.3.14~(ii)]{Kau91}. Then $Y$ has effective Hausdorff dimension $1$, contradiction.	
\end{remark}

A variant of complexity that will feature prominently in this study is \emph{strong complexity}, which is defined in terms of a~priori complexity instead of prefix-free Kolmogorov complexity.

\begin{defn}
$X\in\cs$ is \emph{strongly complex} if there is some computable order~$f$ such that $\KA(X\uh n)\geq f(n)$ for all $n$.
\end{defn}

One particularly nice feature of strong complexity is given by the following result. 

\begin{lem}\label{lem-complex-inverse}
Let $X\in\cs$ and let $f$ and $g$ be (not necessarily computable) order.  
\begin{itemize}[noitemsep,topsep=0pt]
\item[(i)] If for an order~$h\colon\omega\rightarrow\omega$ we have  $\KA(X\uh g(n))\geq h(n)$ for every $n$, then $\KA(X\uh n) \geq h(g^{-1}(n)-1)$ for every $n$.
\item[(ii)] If $\KA(X\uh n)\geq f(n)$ for every $n$, then $\KA(X\uh f^{-1}(n))\geq n$ for every $n$.
\end{itemize}
\end{lem}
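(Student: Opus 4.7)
The plan is to derive both statements directly from the monotonicity of $\KA$ with respect to the prefix order (noted just after the definition of a priori complexity) together with the definition $g^{-1}(n) = \min\{k : g(k) \geq n\}$. Recall that this monotonicity is immediate from $M$ being a semi-measure: $\sigma \preceq \tau$ implies $M(\sigma) \geq M(\tau)$, and taking $-\log$ reverses the inequality.

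For part (i), fix $n \geq 1$ and set $m := g^{-1}(n) - 1$. Since the orders in use satisfy $g(0)=0$, for $n\geq 1$ we have $g^{-1}(n) \geq 1$ and so $m \geq 0$. By the minimality clause in the definition of $g^{-1}$, every $k < g^{-1}(n)$ satisfies $g(k) < n$; in particular $g(m) < n$, so $X\uh g(m)$ is a proper prefix of $X\uh n$. Monotonicity of $\KA$ together with the hypothesis then yields
\[
\KA(X\uh n) \;\geq\; \KA(X\uh g(m)) \;\geq\; h(m) \;=\; h(g^{-1}(n)-1),
\]
which is what is claimed. The case $n=0$ is trivial, since $X\uh 0$ is the empty string and $\KA$ of the empty string is $0$.

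For part (ii), simply specialize the hypothesis by substituting $f^{-1}(n)$ for the quantified variable, obtaining $\KA(X\uh f^{-1}(n)) \geq f(f^{-1}(n))$. By the very definition of $f^{-1}$, we have $f(f^{-1}(n)) \geq n$, so $\KA(X\uh f^{-1}(n)) \geq n$ as desired. (Alternatively, one could apply part (i) to massage the indexing, but the direct substitution is cleaner.) There is essentially no real obstacle here: part (i) is a one-step unwinding of $g^{-1}$ combined with prefix-monotonicity, and part (ii) is a single substitution exploiting $f(f^{-1}(n)) \geq n$.
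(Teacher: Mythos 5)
Your proof is correct and follows essentially the same approach as the paper's: both parts rest on the monotonicity of $\KA$ under the prefix order together with the defining inequalities $g(g^{-1}(n)-1) < n$ and $f(f^{-1}(n)) \geq n$. Your version is a touch more careful about the $n=0$ boundary (where $h(g^{-1}(0)-1)$ would formally read $h(-1)$), which the paper silently ignores, but there is no substantive difference in the argument.
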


\begin{proof}
See Appendix \ref{appendix}.
\end{proof}

An immediate consequence of Lemma \ref{lem-complex-inverse} is an alternative characterization of strong complexity.

\begin{corollary}\label{cor-sc}
$X\in\cs$ is \emph{strongly complex} if and only if  there is some computable order~$f$ such that $\KA(X\uh f(n))\geq n$ for all $n$.
\end{corollary}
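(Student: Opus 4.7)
The plan is to derive both directions directly from Lemma~\ref{lem-complex-inverse}, treating it as a black box. The statement is essentially a symmetric rewriting of strong complexity via order inversion, so there is no substantive obstacle to overcome, only some minor bookkeeping about inverses of computable orders.

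For the forward direction, I would assume $X$ is strongly complex, so fix a computable order $g$ with $\KA(X \uh n) \geq g(n)$ for all $n$. I would then apply Lemma~\ref{lem-complex-inverse}(ii) with $f = g$ to obtain $\KA(X \uh g^{-1}(n)) \geq n$ for every $n$. Since $g$ is a computable order, $g^{-1}$ is also a computable order (by its definition $g^{-1}(n) = \min\{k : g(k) \geq n\}$, which is computable and, as noted right after the definition of orders, is itself an order). Setting $f := g^{-1}$ gives what we need.

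For the reverse direction, suppose $f$ is a computable order with $\KA(X \uh f(n)) \geq n$ for all $n$. I would invoke Lemma~\ref{lem-complex-inverse}(i) with $g := f$ and the order $h$ taken to be the identity $n \mapsto n$, yielding $\KA(X \uh n) \geq f^{-1}(n) - 1$ for every $n$. The function $n \mapsto f^{-1}(n) - 1$ is computable, non-decreasing, and unbounded (since $f^{-1}$ is an order), hence is an order on a final segment of $\omega$; a trivial modification on finitely many initial values (e.g.\ replacing the first few values by $0$ to enforce $g(0) = 0$) produces a computable order $h$ with $\KA(X \uh n) \geq h(n)$ for all $n$, witnessing strong complexity.

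The only mildly delicate point is ensuring that the inverted function still meets the convention $g(0) = 0$ imposed at the start of Section~\ref{sec:basic-prop} and is defined on all of $\omega$, but this is handled by the finite adjustment mentioned above. Everything else is a direct application of the preceding lemma.
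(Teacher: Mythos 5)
Your proof is correct and takes exactly the approach the paper intends: the paper presents the corollary as an ``immediate consequence'' of Lemma~\ref{lem-complex-inverse} without spelling it out, and you have simply unpacked both directions — part (ii) for the forward implication and part (i) with $h=\id$ for the reverse — together with the routine observation that the inverse of a computable order is a computable order (up to a finite adjustment to stay in $\omega$ and meet the $f(0)=0$ convention).
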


The following observation was already claimed by Higuchi et al.~\cite{HigHudSim14} without an explicit proof.
\begin{prop}\label{prop-complex-strcomplex}
$X\in\cs$ is complex if and only if $X$ is strongly complex.
\end{prop}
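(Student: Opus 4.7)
The plan is to prove the two directions separately, with the reverse direction following almost immediately from facts already in the excerpt, and the forward direction requiring one standard auxiliary inequality comparing $\K$ and $\KA$.

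For the easy direction (strongly complex $\Rightarrow$ complex), I would use the inequality $\KA(\sigma) \leq \K(\sigma) + O(1)$ recalled just before Lemma~\ref{lem-KA-K}. If $f$ is a computable order witnessing strong complexity, so $\KA(X\uh n) \geq f(n)$, then $\K(X\uh n) \geq f(n) - c$ for a constant $c$, and truncating at $0$ (or equivalently setting $h(n) = \max(f(n) - c, 0)$) gives a computable order with $h(0) = 0$ that witnesses complexity of $X$.

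The forward direction is harder. I would start by invoking the reformulation from Kjos-Hanssen et al.\ recalled in Section~\ref{sec:basic-prop}: complexity of $X$ yields a computable order $f$ with $\K(X\uh f(n)) \geq n$ for every $n$. The key auxiliary ingredient is the standard inequality
\[
\K(\sigma) \leq \KA(\sigma) + \K(|\sigma|) + O(1),
\]
which is not explicitly in the excerpt but admits a short derivation: since $\sum_{|\sigma|=m} M(\sigma) \leq 1$, at most $2^k$ strings of length $m$ satisfy $\KA(\sigma) \leq k$, and for each $(m,k)$ these strings can be effectively enumerated from below-approximations to $M$, so any $\sigma$ is determined by the triple $(|\sigma|,\KA(\sigma),\text{position in the enumeration})$, which yields a prefix-free description of the claimed length. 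Substituting $\sigma = X\uh f(n)$ and using $\K(f(n)) \leq \K(n) + O(1) \leq 2\log n + O(1)$ (by computability of $f$), I obtain
\[
n \leq \K(X\uh f(n)) \leq \KA(X\uh f(n)) + 2\log n + O(1),
\]
so $\KA(X\uh f(n)) \geq n - 2\log n - O(1)$.

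To finish, I would fix $n_1$ large enough that $n - 2\log n - O(1) \geq n/2$ for all $n \geq n_1$, and define $g(0) = 0$ and $g(m) = f(2m + n_1)$ for $m \geq 1$. Then $g$ is a computable order with $g(0) = 0$, and for every $m \geq 1$ one has $\KA(X\uh g(m)) = \KA(X\uh f(2m+n_1)) \geq (2m+n_1)/2 \geq m$; the case $m=0$ is trivial since $\KA(\emptystr) = 0$. By Corollary~\ref{cor-sc}, $X$ is strongly complex. The main obstacle is the inequality $\K(\sigma) \leq \KA(\sigma) + \K(|\sigma|) + O(1)$, as it is the only place the argument reverses the trivial bound $\KA \leq \K$; everything else is bookkeeping to ensure the resulting order satisfies the conventions $g(0) = 0$ and $\KA(X\uh g(m)) \geq m$ for all $m$, which is why the shift by $n_1$ and the separate treatment of $m=0$ are included.
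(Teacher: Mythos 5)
Your proof is correct and takes essentially the same route as the paper's: the easy direction uses $\KA(\sigma)\leq\K(\sigma)+O(1)$, and the forward direction hinges on the same standard inequality $\K(\sigma)\leq\KA(\sigma)+\K(|\sigma|)+O(1)$ (which the paper cites from Downey and Hirschfeldt rather than re-deriving). The only cosmetic difference is the final bookkeeping step — you construct a shifted order $g$ and invoke Corollary~\ref{cor-sc}, while the paper appeals directly to Lemma~\ref{lem-complex-inverse}(i) — but these are two phrasings of the same manipulation.
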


\begin{proof}
See Appendix \ref{appendix}.
\end{proof}

Lastly, we will make use of the following known result.

\begin{lem}[Bienvenu and Porter~\cite{BiePor12}]\label{lem-complex-wtt}
If $X$ is complex and $X\leq_{\mathrm{wtt}}Y$, then $Y$ is complex.
\end{lem}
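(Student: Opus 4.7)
The plan is to push the computable lower bound on $\K(X\uh\cdot)$ back through the reduction. First I would unpack $X\leq_{\wtt} Y$ to obtain a Turing functional $\Phi$ and a total computable function $g\colon\omega\to\omega$ with $\Phi^Y=X$ and $X\uh n$ already determined by $Y\uh g(n)$ for every $n$. Replacing $g$ with $n\mapsto\max(g(n),n+1)$ if necessary, I may assume $g$ is a strictly increasing computable order; then $n^\ast(m):=\max\{n:g(n)\leq m\}$ is a well-defined computable non-decreasing function that tends to infinity with $m$. Let $f$ be the computable order witnessing the complexity of $X$, so $\K(X\uh n)\geq f(n)$ for all $n$.

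Next I would introduce a single (partial) computable map $F\colon\str\to\str$ defined as follows: on input $\sigma$, compute $n=n^\ast(|\sigma|)$, simulate $\Phi^\sigma$ until its first $n$ output bits converge, and return that $n$-bit string. For $\sigma=Y\uh m$ this converges and yields $F(Y\uh m)=X\uh n^\ast(m)$. Composing $F$ with a universal prefix-free machine produces another prefix-free machine under which every $U$-description of $Y\uh m$ becomes a description of $X\uh n^\ast(m)$; universality of $U$ then gives a constant $c$ with
\[
\K(X\uh n^\ast(m)) \leq \K(Y\uh m) + c \qquad \text{for all } m.
\]

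Combining this with $\K(X\uh n^\ast(m))\geq f(n^\ast(m))$ gives $\K(Y\uh m)\geq f(n^\ast(m))-c$ for every $m$. I would then check that $h(m):=\max\bigl(0,f(n^\ast(m))-c\bigr)$ is a computable order: computability is clear, non-decreasingness follows from that of $f$ and $n^\ast$, and unboundedness from $n^\ast(m)\to\infty$ together with $f$ being unbounded. Hence $h$ witnesses the complexity of $Y$. There is no deep obstacle; the only care point is arranging $g$ to be a total computable order so that $n^\ast$ has the required properties, and this is handled by the preliminary modification of $g$.
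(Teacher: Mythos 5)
Your argument is correct and follows the standard route for this lemma: extract the computable use bound $g$ from the $\wtt$-reduction, show $\K(X\uh n^\ast(m))\leq\K(Y\uh m)+O(1)$ by composing a canonical decoder with the universal prefix-free machine, and then push $X$'s complexity bound $f$ through to get the computable order $h=f\circ n^\ast - c$ witnessing $Y$'s complexity. The only cosmetic point is that $n^\ast(m)$ is undefined for $m<g(0)$, but since this affects only finitely many $m$ one simply sets $h$ to $0$ there (consistent with the paper's convention that orders vanish at $0$), so the argument goes through.
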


\section{Complex sequences and computable, continuous measures}\label{sec:complex-continuous}

In this section, we study the relationship between proper complex sequences and computable, continuous measures; this can be seen as an extension of the work of Reimann~\cite{Rei08} who studied the relationship between complex sequences and (not necessarily computable) continuous  measures. First we show that for proper sequences being complex is equivalent to being random with respect to a computable, continuous measure; this result will be used as a technical tool throughout the rest of this article. Next we discuss some interesting consequences of this result. Finally we use the notion of granularity of a computable, continuous measure to show that the complexity of all sequences that are random for such a measure is witnessed by a common lower complexity bound.

\subsection{Characterizing complex proper sequences}\label{sdfhfdgjudstrefdgfhf}

According to Proposition \ref{prop-proper-autocomplex}, if $\mu$ is a computable measure and $X\in\MLR_\mu$ is non-computable, then $X$ is autocomplex.  However, in the case that $\mu$ is continuous, we can prove something stronger.  To do so, we will make use of the following lemma.

\begin{lem}\label{lem-tt-random}
Let $\mu$ be a computable, continuous measure and let $X\in\MLR_\mu$.  Then there is some $Y\in\MLR$ such that~$Y\leq_{\tt} X$.
\end{lem}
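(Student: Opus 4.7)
The plan is to use randomness preservation: construct a tt-functional $\Phi:\cs\to\cs$ with $\mu_\Phi=\lambda$ and set $Y:=\Phi(X)$. Then Theorem~\ref{thm-rand-pres} yields $Y\in\MLR_{\mu_\Phi}=\MLR$, and $Y\leq_{\tt}X$ since $\Phi$ is tt.

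The key tool for constructing $\Phi$ is the granularity function $g(n):=\min\{k:\max_{\sigma\in 2^k}\mu(\sigma)\leq 2^{-n}\}$, which is total and computable. Totality follows by compactness: if $g(n)$ did not exist, K\"onig's Lemma applied to the tree $\{\sigma:\mu(\sigma)>2^{-n}\}$ would give an infinite branch $Z$ with $\mu(Z\uh k)>2^{-n}$ for all $k$, contradicting the continuity of $\mu$. Computability of $g$ follows from that of $\mu$.

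The natural candidate for $\Phi$ arises from the cumulative distribution $F_\mu(X):=\mu(\{Z:Z<_{\mathrm{lex}}X\})$, which is continuous, monotone, and pushes $\mu$ forward to Lebesgue measure on $[0,1]$; then $\Phi(X)$ would be the binary expansion of $F_\mu(X)$. Reading $X\uh g(n+c)$ locates $F_\mu(X)$ within an interval of length at most $2^{-(n+c)}$, so for ``generic'' $X$ the $n$-th binary digit is determined, and $\Phi$ is tt with use bounded by $g(n+c)$.

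The main obstacle is that on the countable set of $X$ where $F_\mu(X)$ is a dyadic rational, binary expansions are ambiguous and no finite prefix of $X$ commits some of the bits; a naive ``default'' convention at the undecided stages distorts the pushforward on sets of positive $\mu$-measure (since committing a default value across an entire cylinder $\llbracket X\uh g(n+c)\rrbracket$ misassigns the mass of $X'$'s within that cylinder whose ``true'' bit differs). I would circumvent this by implementing $\Phi$ not via raw binary expansion but through a sequence of nested computable clopen partitions of $\cs$: at stage $n$, use $g$ to chop $\cs$ into $2^n$ cells whose $\mu$-measures converge (with summable errors across stages) to $2^{-n}$, and let $\Phi(X)$ record the cell indices. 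A careful greedy allocation, exploiting the flexibility afforded by granularity, forces $\mu_\Phi=\lambda$, and randomness preservation then completes the argument.
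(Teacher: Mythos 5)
Your high-level plan is the right one: push $\mu$-randomness to $\lambda$-randomness via a tt-functional and invoke randomness preservation. In fact the paper does exactly this, but it simply cites Bienvenu and Porter (Proposition 6.4) for the existence of a tt-functional $\Phi$ with $\MLR_{\mu_\Phi}=\MLR$, whereas you attempt to build $\Phi$ from scratch. Two things in that attempt are genuinely broken.

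First, the claim that the granularity function $g(n)=\min\{k:\max_{\sigma\in 2^k}\mu(\sigma)\leq 2^{-n}\}$ is computable because $\mu$ is computable is false. Deciding whether a computable real is $\leq 2^{-n}$ is not effective in general, and the paper proves in Section~\ref{sec:granularity} that there is a computable, continuous measure whose granularity function is \emph{not} computable. What is available — and what you need — is Lemma~\ref{lem:gran1}: a computable order $f$ with $g_\mu(n)\leq f(n)<g_\mu(n+2)$. All your uses of $g$ would have to be routed through such an $f$, and you would then carry small errors throughout.

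Second, the closing claim that a ``careful greedy allocation'' can force $\mu_\Phi=\lambda$ exactly is an overclaim. Building a computable clopen partition whose cells have $\mu$-measure \emph{exactly} $2^{-n}$ would again require solving exact equations among computable reals, which is the same non-effectiveness problem. What one can achieve is a partition whose cell measures are within summable error of $2^{-n}$, giving a computable pushforward $\mu_\Phi$ that is equivalent to $\lambda$ (in the sense of mutual absolute continuity with bounded density), and hence $\MLR_{\mu_\Phi}=\MLR$ — which is what the cited Bienvenu–Porter proposition actually delivers, and what the lemma needs. You should weaken the target from $\mu_\Phi=\lambda$ to $\MLR_{\mu_\Phi}=\MLR$ and supply the actual allocation argument; as written, the crucial construction is still a sketch with a false endpoint and an uncomputable ingredient.
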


\begin{proof}
Bienvenu and Porter~\cite[Proposition 6.4]{BiePor12}, show that if $\nu$ is a computable, continuous measure, then there is some $\mathrm{tt}$-functional $\Phi$ such that $\MLR_{\nu_\Phi}=\MLR$.  Given such a $\Phi$ corresponding to the measure $\mu$, $\Phi(X)\in\MLR_{\nu_\Phi}$ by the Randomness Preservation Theorem~\ref{thm-rand-pres}.  Thus $X$~$\tt$-computes some $Y\in\MLR$. 
\end{proof}

\begin{thm}\label{thm-complex-continuous1}
If $X\in\cs$ is Martin-L\"of random with respect to a computable, continuous measure $\mu$, then $X$ is complex.
\end{thm}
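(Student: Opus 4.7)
The plan is to reduce the problem to the Lebesgue-measure case, where complexity of random sequences follows directly from the Levin-Schnorr theorem, and then to transfer complexity back to $X$ via a reducibility argument.

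First, I would apply Lemma \ref{lem-tt-random} to $\mu$ and $X$ to obtain a sequence $Y \in \MLR$ (i.e., Martin-L\"of random with respect to Lebesgue measure $\lambda$) such that $Y \leq_{\tt} X$. This is the only place where continuity of $\mu$ is used; it provides the crucial tt-functional sending $X$ to a Lebesgue-random sequence.

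Next, I would observe that $Y$ is complex. This is an immediate consequence of the Levin-Schnorr Theorem \ref{thm-levin-schnorr} applied to Lebesgue measure: there is a constant $c$ such that $\K(Y\uh n) \geq n - c$ for every $n$, so the computable order $f(n) = \max(0, n-c)$ witnesses that $Y$ is complex.

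Finally, since every tt-functional is a wtt-functional, $Y \leq_{\tt} X$ implies $Y \leq_{\wtt} X$. Applying Lemma \ref{lem-complex-wtt} with the roles as stated (complexity is preserved upward under $\wtt$-reducibility) then yields that $X$ is complex.

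The argument is short and modular; the only subtlety is being careful that all the pieces line up: verifying that the semi-ambient bound $\K(Y\uh n) \geq n - c$ can be promoted to an honest computable order, and that the direction of $\wtt$-reducibility in Lemma \ref{lem-complex-wtt} matches our situation (it does: we have the complex sequence $Y$ on the ``smaller'' side). I do not anticipate a genuine obstacle, since both Lemma \ref{lem-tt-random} and Lemma \ref{lem-complex-wtt} are exactly the tools this statement is asking us to combine.
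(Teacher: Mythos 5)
Your proof is correct and follows the same route as the paper: apply Lemma \ref{lem-tt-random} to obtain $Y \in \MLR$ with $Y \leq_{\tt} X$, then invoke Lemma \ref{lem-complex-wtt} to transfer $Y$'s complexity to $X$. The only difference is that you spell out why $Y$ is complex via the Levin-Schnorr bound, which the paper leaves implicit.
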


\begin{proof}
If $X\in\cs$ is $\mu$-Martin-L\"of random for a continuous, computable measure $\mu$, then by Lemma \ref{lem-tt-random}, there is some Martin-L\"of random sequence $Y\leq_{\mathrm{tt}} X$.  By  Lemma \ref{lem-complex-wtt}, complexity is closed upwards under $\mathrm{wtt}$-reducibility (and hence $\mathrm{tt}$-reducibility), and since $Y$ is complex, so is $X$.
\end{proof}

We will give an alternative proof of this result in Section \ref{sec:granularity}, when we introduce the notion of the granularity of a computable measure.  

\begin{remark}\label{rmk-no-converse-complex}
The converse of Theorem \ref{thm-complex-continuous1} does not hold, since there are complex sequences that are not random with respect to \emph{any} computable measure.  For instance, the example cited in Remark~\ref{rmk-proper-converse} shows that the converse fails.
\end{remark}

We can, however, obtain a partial converse of Theorem \ref{thm-complex-continuous1} when we restrict to \textit{proper} complex sequences.  

\begin{thm}\label{thm-complex-continuous2}
If $X\in\cs$ is complex and proper, then there  is a computable, continuous measure $\nu$ such that $X\in\MLR_\nu$.
\end{thm}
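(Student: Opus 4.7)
The plan is to construct $\nu$ by an effective modification of $\mu$ that removes its atoms while keeping $\nu(X \uh n)$ bounded below by a constant multiple of $\mu(X \uh n)$. Combined with $X \in \MLR_\mu$ and the Levin-Schnorr Theorem (Theorem~\ref{thm-levin-schnorr}), this will give $\K(X \uh n) \geq -\log \mu(X \uh n) - O(1) \geq -\log \nu(X \uh n) - O(1)$, and hence $X \in \MLR_\nu$.

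First I use the complexity hypothesis to effectively separate $X$'s path from the paths of the $\mu$-atoms. By Proposition~\ref{prop-complex-strcomplex}, $X$ is strongly complex, so there is a computable order $g$ with $\KA(X \uh n) \geq g(n)$. Since $\mu$ is a left-c.e.\ semi-measure there is a constant $c$ with $\mu \leq c \cdot M$, giving $\mu(X \uh n) \leq c \cdot 2^{-g(n)}$. Consider the c.e.\ set $S = \{\sigma \in \str : \mu(\sigma) \geq 2^{-g(|\sigma|)/2}\}$. Every $\mu$-atom $Y$ satisfies $\mu(Y \uh n) \geq \mu(\{Y\}) > 0$, which forces $Y \uh n \in S$ for all sufficiently large $n$. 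On the other hand, the bound $\mu(X \uh n) \leq c \cdot 2^{-g(n)} < 2^{-g(n)/2}$ holds for all sufficiently large $n$, so only finitely many prefixes of $X$ lie in $S$.

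Next I build $\nu$ as the limit of computable measures $\nu_s$ starting from $\nu_0 = \mu$: at stage $s$, when a new string $\sigma_s$ is enumerated into $S$, halve the current $\nu$-mass on the subtree $\llb\sigma_s\rrb$ and add a Lebesgue-style continuous mass of total weight $\nu_{s-1}(\sigma_s)/2$ on the sibling subtree of $\sigma_s$, so that the values of $\nu$ on ancestors of $\sigma_s$ are preserved. The construction is uniformly computable, and the $\nu_s$ converge effectively to a computable probability measure $\nu$. For continuity, every $\mu$-atom $Y$ triggers infinitely many halvings along $Y$'s path, so $\nu(Y \uh n) \to 0$, and the Lebesgue-style spreading of redistributed mass ensures that no new atoms form. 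For the randomness of $X$, only the finitely many prefixes of $X$ in $S$ trigger halvings on $X$'s path, giving $\nu(X \uh n) \geq 2^{-N} \mu(X \uh n)$ for some fixed $N$, which by Levin-Schnorr yields $X \in \MLR_\nu$.

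The main technical obstacle I expect is the redistribution scheme: one must verify that the Lebesgue-style spreading of freed mass neither creates new atoms nor disrupts the effective convergence of the $\nu_s$, and that the cumulative modifications leave $\nu$ as a genuine computable probability measure. Standard bookkeeping --- ensuring that the total redistributed mass landing on any fixed cylinder is summable in a controlled way --- should handle this concern, although it requires care in implementation.
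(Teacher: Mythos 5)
Your approach differs genuinely from the paper's, but it has a real gap in the computability of $\nu$. The paper proceeds through Lemma~\ref{lem:cont-measure-Pi01}: from the complexity bound it forms the $\Pi^0_1$ class $\P=\{Y\colon(\forall n)\,\K(Y\uh n)\geq f(n)\}$, takes a \emph{computable} tree $T$ with $[T]=\P$, and then defines $\nu$ directly by setting $\nu=\mu$ on $T$ and its boundary and letting $\nu$ split uniformly below $T$; no limit construction and no mass redistribution is needed, and computability of $\nu$ is immediate since membership in $T$ is decidable. You instead isolate the atoms via the set $S$ of strings whose $\mu$-mass is ``too large'' relative to the a~priori complexity bound, and build $\nu$ dynamically by halving mass on $\llb\sigma\rrb$ and shifting it to the sibling whenever $\sigma$ enters $S$. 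This is a legitimate and different idea, but two problems appear. A small one: as written, with $\geq$, your $S$ is $\Pi^0_1$, not c.e.; you presumably intend strict inequality. The serious one: even with $S$ c.e., the limit $\nu$ is not obviously a computable measure. To approximate $\nu(\sigma)$ to within $2^{-k}$ you would need to know which of the $\leq 2|\sigma|$ prefixes of $\sigma$ and their siblings ultimately belong to $S$, because a prefix of $\sigma$ entering $S$ at an arbitrarily late stage halves $\nu(\sigma)$ by an amount that is not controlled by the stage number. Your acknowledged concern and proposed ``standard bookkeeping'' only address the Lebesgue-style \emph{additions} to a cylinder, which are summable, but say nothing about the uncontrolled \emph{halvings} triggered by late enumerations into $S$.

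The gap is repairable but requires a specific fix you did not supply: replace $S$ by a \emph{decidable} approximation $\widetilde S$, for instance $\widetilde S=\{\sigma\colon \widetilde\mu(\sigma,|\sigma|)>2^{-g(|\sigma|)/2}\}$, where $\widetilde\mu$ is the computable approximation of $\mu$. One can check (using $g(n)\leq n+O(\log n)$) that $\widetilde S$ still contains cofinitely many prefixes of every $\mu$-atom and only finitely many prefixes of $X$, and with $\widetilde S$ decidable the value of $\nu(\sigma)$ becomes a finite computation over the $2|\sigma|$ relevant strings, giving computability directly. This recovers, in spirit, what the paper gets for free by working with the computable tree $T$. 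It is worth noting that the paper's route is also slightly more economical: it does not need to invoke strong complexity or the universal semi-measure $M$, only the $\Pi^0_1$ class coming straight from the definition of complexity, and it separates out Lemma~\ref{lem:cont-measure-Pi01} so that the argument can be reused (as it is, in Theorem~\ref{thm-ncr-semigeneric}).
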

Note that without the complexity assumption  this statement is trivially false by Theorem~\ref{thm-complex-continuous1}.

To prove Theorem~\ref{thm-complex-continuous2}, we need the following lemma, which will also be useful in the further course of the article.

\begin{lem}\label{lem:cont-measure-Pi01}
Let $\mu$ be a computable measure, let $X\in\MLR_\mu$, and let $\mathcal{P}$ be a $\Pi^0_1$ class containing $X$ but no computable members.  Then there is some computable, continuous measure $\nu$ such that $X\in\MLR_\nu$.
\end{lem}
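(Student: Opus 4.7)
The idea is to produce $\nu$ as the pushforward of Lebesgue measure under a suitably modified Turing functional. Apply Theorem~\ref{thm-measures-tf}(ii) to fix a $\lambda$-almost total Turing functional $\Phi$ with $\mu = \lambda_\Phi$, and let $T \subseteq \str$ be a computable tree with $[T] = \mathcal{P}$. I would define a new Turing functional $\Psi$ as follows: on input $Z$, $\Psi$ simulates $\Phi^Z$ and emits the bits of this simulated output as long as each new output prefix lies in $T$; at the first moment the simulated output produces a prefix of length $n$ outside $T$ (a \emph{defection}, which is detectable since $T$ is computable), $\Psi$ stops the simulation and continues by copying the remaining bits of $Z$ verbatim to the output. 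Since $\Phi$ is $\lambda$-almost total and the copying step always extends to an infinite output, $\Psi$ is $\lambda$-almost total as well, so by Theorem~\ref{thm-measures-tf}(i) the pushforward $\nu := \lambda_\Psi$ is a computable probability measure.

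To see that $\nu$ is continuous, fix $W \in \cs$ and analyze $\Psi^{-1}(\{W\}) = \{Z : \Psi^Z = W\}$. If $W \in \mathcal{P}$, then $W \uh n \in T$ for every $n$, so no $Z$ can reach $W$ via a defected computation (a defected output must contain a prefix outside $T$); hence $\Psi^{-1}(\{W\}) \subseteq \Phi^{-1}(\{W\})$. Since $\mathcal{P}$ contains no computable members, $W$ is not computable, so it is not a $\mu$-atom by Proposition~\ref{prop-kautz}, giving $\lambda(\Phi^{-1}(\{W\})) = \mu(\{W\}) = 0$. If instead $W \notin \mathcal{P}$, let $n$ be the minimal length with $W\uh n \notin T$; any $Z$ with $\Psi^Z = W$ must defect at that output length, which forces $\Phi^{Z\uh k}\uh n = W\uh n$ for the minimal $k$ making $|\Phi^{Z\uh k}|\geq n$, together with $Z_{\geq k} = W_{\geq n}$. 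As the pairs $(k, Z\uh k)$ range over countably many possibilities and each determines $Z$ uniquely, $\Psi^{-1}(\{W\})$ is countable and $\lambda(\Psi^{-1}(\{W\})) = 0$.

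Finally, for the randomness of $X$, note that $X \in \mathcal{P}$ gives $X\uh m \in T$ for every $m$, so $\Psi$ cannot defect before emitting $X\uh m$ on any $Z$ with $\Phi^Z \succeq X\uh m$; consequently $\Psi^Z \succeq X\uh m$ as well, and $\Phi^{-1}(X\uh m) \subseteq \Psi^{-1}(X\uh m)$ yields $\nu(X\uh m) \geq \mu(X\uh m)$. Combined with $X \in \MLR_\mu$ and Theorem~\ref{thm-levin-schnorr} this gives $-\log \nu(X\uh m) \leq -\log \mu(X\uh m) \leq \K(X\uh m) + O(1)$, and a second application of Theorem~\ref{thm-levin-schnorr} delivers $X \in \MLR_\nu$. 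I expect the main technical subtlety to be the continuity analysis for $W \notin \mathcal{P}$: one must carefully track the input position $k$ at which $\Psi$ defects to conclude that the fibre $\Psi^{-1}(\{W\})$ is countable rather than of positive measure. Everything else (the validity of the functional, the inequality $\nu(X\uh m)\geq \mu(X\uh m)$, and the final Levin--Schnorr step) reduces to bookkeeping once the defection construction is set up correctly.
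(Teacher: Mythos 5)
Your proof is correct, but it takes a genuinely different route from the paper's. The paper constructs $\nu$ directly as a function on strings: it sets $\nu(\sigma)=\mu(\sigma)$ for $\sigma$ in (or immediately off) the computable tree $T$ with $[T]=\mathcal{P}$, and otherwise splits mass evenly, $\nu(\sigma)=\tfrac12\nu(\sigma^-)$. Computability is immediate, continuity follows because $\nu$-atoms in $\mathcal{P}$ would have to be computable (Proposition~\ref{prop-kautz}) while potential atoms off $\mathcal{P}$ have their mass repeatedly halved, and $X\in\MLR_\nu$ follows from Levin--Schnorr since $\nu$ and $\mu$ literally agree on all initial segments of $X$. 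You instead pass to the machine side: take $\Phi$ with $\lambda_\Phi=\mu$ via Theorem~\ref{thm-measures-tf}(ii), build a ``defecting'' functional $\Psi$ that mirrors $\Phi$ while the output stays in $T$ and switches to copying the input once the output leaves $T$, and set $\nu=\lambda_\Psi$. The inequality $\nu(X\uh m)\geq\mu(X\uh m)$ then does the work that exact agreement does in the paper. Both arguments are sound, but the paper's is shorter and more elementary; yours requires care to make $\Psi$ a valid monotone Turing functional (the defection position depends on the enumeration stage of $S_\Phi$, not just on the input prefix, so one must fix a dovetailing convention to pin down where copying starts and check that this respects monotonicity --- you flag this yourself, and it is a genuine bookkeeping obligation, not a gap). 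In exchange, your approach exposes a useful viewpoint: one is replacing the atomic part of $\mu$ off $\mathcal{P}$ by a uniform overlay, an idea that recurs elsewhere in the paper (e.g.\ in the constructions of Sections~\ref{sec:atomic} and~\ref{sec-non-complex}).
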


\begin{proof}
If $\mu$ is continuous, then we are done.  Suppose, then, that $\mu$ is atomic.  By Theorem \ref{thm-kucera}, since $X\in\MLR_\mu$ and~$X\in\P$, it must be the case that $\mu(\P)>0$.  Let $T$ be a computable tree such that $\P=[T]$.  We define a computable, continuous measure $\nu$ such that $X\in\MLR_\nu$ in a straightforward fashion:  If $\sigma\in T$, we set $\nu(\sigma)=\mu(\sigma)$.  If $\sigma\notin T$, we have two cases to consider. Letting $\sigma^-$ denote the initial segment of $\sigma$ of length $|\sigma|-1$, we set $\nu(\sigma)=\mu(\sigma)$ if $\sigma^-\in T$.  However, if $\sigma^-\notin T$, then we set $\nu(\sigma)=\frac{1}{2}\nu(\sigma^-)$.

Clearly $\nu$ is a computable measure. To see that $\nu$ is continuous, given $Y\in\cs$, we have two cases to consider.  First, if $Y\in\P$, then since $\P$ contains no computable sequences, it follows from Proposition \ref{prop-kautz} that $Y$ cannot be a~$\nu$-atom.  Second, if $Y\notin\P$, then by definition of $\nu$, $\nu(Y\uh (n+1))=\frac{1}{2}\nu(Y\uh n)$ for all sufficiently large $n$, and hence $\nu(\{Y\})=\lim_{n\rightarrow\infty}\nu(Y\uh n)=0$.

Finally, we claim that $X\in\MLR_\nu$.  Since $X\in\MLR_\mu$, it follows from the Levin-Schnorr Theorem~\ref{thm-levin-schnorr} that ${\K(X\uh n)\geq -\log\mu(X\uh n)-O(1)}$ for every $n$.  But since $X\uh n\in T$ for every $n$, it follows that $\mu(X\uh n)=\nu(X\uh n)$ for every $n$.  Thus ${\K(X\uh n)\geq-\log\nu(X\uh n)-O(1)}$ for every $n$.  Again by the Levin-Schnorr Theorem, it follows that $X$ is $\nu$-Martin-L\"of random.
\end{proof}

\begin{proof}[Proof of Theorem \ref{thm-complex-continuous2}]
Let $X$ and $\mu$ be as in the statement of the theorem.  Since $X$ is complex, let $f$ be a computable order~$f$ such that $\K(X\uh n)\geq f(n)$ for all $n$.  The set $\P=\{Y\colon(\forall n)[\K(Y\uh n)\geq f(n)]\}$ is a $\Pi^0_1$~class that contains $X$ but does not contain any computable members.  Then, by Lemma \ref{lem:cont-measure-Pi01}, there is some computable, continuous measure $\nu$ such that $X\in\MLR_\nu$.
\end{proof}

\subsection{Consequences of the characterization}\label{shfjshrjhsjf}

There are a number of interesting results that follow from Theorems \ref{thm-complex-continuous1} and \ref{thm-complex-continuous2}.  The first involves the class $\cNCR$, which is the collection of sequences that are not random with respect to any computable, continuous measure, first introduced by Bienvenu and Porter~\cite{BiePor12}.  $\cNCR$ can be seen as the computable analogue of the collection $\NCR_1$ of sequences that are not Martin-L\"of random with respect to \emph{any} continuous measure, which has been studied by Reimann and Slaman~\cite{ReiSla07, ReiSla08}.  

It follows from a result of Reimann~\cite[Theorem 5]{Rei08} that no complex sequence is contained in $\NCR_1$.  This does not characterize $\NCR_1$, as there are continuum many non-complex sequences (which follows, for instance, from Theorem \ref{thm-ioc-ioac} below) but only countably many sequences in $\NCR_1$ as shown by Reimann and Slaman~\cite{ReiSla07}.  However, when we restrict to proper sequences, we get a precise characterization of $\cNCR$ in terms of complexity.

\begin{corollary}\label{cor-ncr}
Let $X\in\cs$ be proper.  Then $X\notin\mathrm{NCR}_{\mathrm{comp}}$ if and only if $X$ is complex.
\end{corollary}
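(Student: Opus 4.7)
The plan is to deduce this corollary directly from the two preceding theorems of Section~\ref{sdfhfdgjudstrefdgfhf}, since together they give matching implications in the two directions.

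First I would unpack the definition: $X \notin \cNCR$ means there exists a computable, continuous measure $\mu$ such that $X \in \MLR_\mu$. For the left-to-right direction, I would simply cite Theorem~\ref{thm-complex-continuous1}, which states that any sequence random with respect to a computable, continuous measure is complex. Note that this direction does not require the hypothesis that $X$ is proper.

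For the right-to-left direction, I would use Theorem~\ref{thm-complex-continuous2}: a complex proper sequence is random with respect to some computable, continuous measure. Since $X$ is assumed to be both complex and proper, the hypotheses of that theorem are met, yielding a computable, continuous $\nu$ with $X \in \MLR_\nu$, which is exactly $X \notin \cNCR$. Here the properness assumption is essential, and indeed cannot be dropped in view of Remark~\ref{rmk-no-converse-complex}.

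There is no real obstacle: the work has already been done in Theorems~\ref{thm-complex-continuous1} and~\ref{thm-complex-continuous2}, and the proof amounts to observing that the two theorems fit together to give an equivalence on the class of proper sequences. If anything, I would add a brief sentence emphasizing the asymmetric role of the properness hypothesis (needed only for the backward direction) so that the reader understands where each half of the equivalence comes from.
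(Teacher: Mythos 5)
Your proof is correct and is essentially identical to the paper's: the forward direction is Theorem~\ref{thm-complex-continuous1} and the backward direction is Theorem~\ref{thm-complex-continuous2}. Your added observation that properness is only needed for the backward direction is accurate and consistent with Remark~\ref{rmk-no-converse-complex}.
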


\begin{proof}
The direction ($\Rightarrow$) follows from Theorem \ref{thm-complex-continuous1}, while the direction ($\Leftarrow$) follows from Theorem \ref{thm-complex-continuous2}.
\end{proof}

Another consequence of Theorems \ref{thm-complex-continuous1} and \ref{thm-complex-continuous2} involves the notion of semigenericity, which was studied by Demuth and later by Demuth and Ku\v cera~\cite{Dem87, DemKuc87}.

\begin{defn}
A non-computable sequence $X\in\cs$ is \emph{semigeneric} if every $\Pi^0_1$ class containing~$X$ also contains a computable sequence.
\end{defn}

\begin{thm}\label{thm-ncr-semigeneric}
Let $X\in\cs$ be proper and non-computable.  Then $X\in\cNCR$ if and only if $X$ is semigeneric.
\end{thm}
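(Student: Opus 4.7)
\medskip

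My plan is to prove both directions by contraposition, and in each case the work is essentially already done by results proved earlier in this section. I will handle the two implications separately.

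For the direction $X\in\cNCR\Rightarrow X$ semigeneric, I will prove the contrapositive: if $X$ fails to be semigeneric, then $X\notin\cNCR$. Assuming $X$ is proper and non-computable but not semigeneric, there is some $\Pi^0_1$ class $\P$ containing $X$ that has no computable members. Since $X$ is proper, pick a computable measure $\mu$ with $X\in\MLR_\mu$. Now apply Lemma~\ref{lem:cont-measure-Pi01} to $\mu$, $X$, and $\P$ to produce a computable, continuous measure $\nu$ with $X\in\MLR_\nu$. This immediately gives $X\notin\cNCR$.

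For the direction $X$ semigeneric $\Rightarrow X\in\cNCR$, I again work by contraposition and assume $X\notin\cNCR$, i.e., $X\in\MLR_\nu$ for some computable, continuous measure $\nu$. By Theorem~\ref{thm-complex-continuous1}, $X$ is complex, so there is a computable order $f$ with $\K(X\uh n)\geq f(n)$ for all $n$. Exactly as in the proof of Theorem~\ref{thm-complex-continuous2}, the class
\[
\P=\{Y\in\cs\colon(\forall n)\;\K(Y\uh n)\geq f(n)\}
\]
is a $\Pi^0_1$ class containing $X$. Since no computable sequence $Z$ satisfies $\K(Z\uh n)\geq f(n)$ for all $n$ (as $\K(Z\uh n)\leq \K(n)+O(1)$ when $Z$ is computable, while $f$ is an unbounded computable order), $\P$ has no computable members, witnessing that $X$ is not semigeneric.

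There is no real obstacle here: both directions reduce to invoking the key technical bridge (Lemma~\ref{lem:cont-measure-Pi01}) together with Theorem~\ref{thm-complex-continuous1}. The only small point to verify carefully is the standard observation that a complexity lower bound of the form $\K(Y\uh n)\geq f(n)$ with $f$ a computable order excludes all computable sequences, which is routine.
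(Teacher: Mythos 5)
Your proof is correct, and one half of it coincides exactly with the paper's argument: for the implication from ``$X$ not semigeneric'' to ``$X\notin\cNCR$'' both you and the paper apply Lemma~\ref{lem:cont-measure-Pi01} to a $\Pi^0_1$ class witnessing failure of semigenericity. For the other implication, however, you take a slightly different route. The paper argues directly: if $X\in\MLR_\mu$ for a computable continuous $\mu$, then the complement $\U_i^c$ of some level of a universal $\mu$-Martin-L\"of test is a $\Pi^0_1$ class containing $X$; it has no computable members because continuity of $\mu$ together with Theorem~\ref{thm-kucera} excludes all computable sequences from $\MLR_\mu$. You instead first invoke Theorem~\ref{thm-complex-continuous1} to conclude that $X$ is complex and then use the complexity-defined $\Pi^0_1$ class $\P=\{Y\colon\forall n\,\K(Y\uh n)\geq f(n)\}$, which is precisely the construction the paper points to in Remark~\ref{rmk-complex-semigeneric}. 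Both routes are valid; the paper's is somewhat more economical since it does not pass through Theorem~\ref{thm-complex-continuous1} (which itself relies on Lemma~\ref{lem-tt-random} and randomness preservation), while your route makes the connection with complexity explicit and works for any complex sequence without the properness hypothesis, which is exactly the extra generality Remark~\ref{rmk-complex-semigeneric} records. Your parenthetical justification that $\P$ contains no computable members is correct but leans on the standard (and worth stating) fact that no computable order is an everywhere lower bound for $\K$; the paper omits this justification entirely, so you are being at least as careful.
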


\begin{proof}
Suppose that $X$ is not computable and that $X\notin\cNCR$.  Then there is a computable, continuous measure $\mu$ such that $X\in\MLR_\mu$.  If $(\U_i)_{i\in\omega}$ is a universal $\mu$-Martin-L\"of test, then there is some $i$ such that $X\in\U_i^c$.  But $\U_i^c$ is a $\Pi^0_1$~class containing no computable sequence, since $\mu$ has no atoms.  Thus $X$ is not semigeneric.

Suppose now that $X$ is not semigeneric.  Then there is some $\Pi^0_1$ class with no computable members that contains $X$.  But since $X\in\MLR_\mu$, we can apply Lemma \ref{lem:cont-measure-Pi01} to conclude that $X\in\MLR_\nu$ for some computable, continuous measure~$\nu$.  Thus $X\notin\cNCR$.
\end{proof}

From Corollary \ref{cor-ncr} and Theorem \ref{thm-ncr-semigeneric} we can conclude the following:

\begin{corollary}\label{cor-complex-semigeneric}
Let $X\in\cs$ be proper.  Then one and only one of the following holds:
\begin{itemize}[noitemsep,topsep=0pt]
\item[(i)] $X$ is complex; 
\item[(ii)] $X$ is semigeneric.
\end{itemize}
\end{corollary}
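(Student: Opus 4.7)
The plan is to read the dichotomy off directly from Corollary~\ref{cor-ncr} and Theorem~\ref{thm-ncr-semigeneric}, which between them express both halves of the alternative in terms of membership in $\cNCR$: for a proper $X$, being complex is equivalent to $X\notin\cNCR$, while for a non-computable proper $X$, being semigeneric is equivalent to $X\in\cNCR$. So the two conditions should be complementary whenever $X$ is non-computable, and the task reduces to verifying both the ``at most one'' and ``at least one'' halves and checking the edge case of a computable proper $X$.

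For the ``at most one'' direction, I would begin by assuming that $X$ is complex. First I would observe that complex sequences are automatically non-computable, which follows because every complex sequence computes a DNC function by Kjos-Hanssen, Merkle, and Stephan~\cite{KjoMerSte11}, while by Kleene's recursion theorem no computable function can be DNC. Corollary~\ref{cor-ncr} then yields $X\notin\cNCR$, and Theorem~\ref{thm-ncr-semigeneric}, applicable because $X$ is now known to be non-computable, rules out $X$ being semigeneric.

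For the ``at least one'' direction, suppose instead that $X$ is not complex. Corollary~\ref{cor-ncr} gives $X\in\cNCR$; if $X$ is additionally non-computable, Theorem~\ref{thm-ncr-semigeneric} immediately yields that $X$ is semigeneric. The only remaining case is a computable proper $X$, which by Proposition~\ref{prop-kautz} is a $\mu$-atom for its witnessing measure $\mu$; such an $X$ satisfies the defining condition of semigenericity in a degenerate way, since every $\Pi^0_1$ class containing $X$ contains the computable sequence $X$ itself, so it is natural to also place it under~(ii).

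The main obstacle, such as it is, lies not in the mathematics but in the interplay with the stated definition of semigenericity, which is phrased only for non-computable sequences: one has to decide how to classify computable proper sequences, all of which are atoms of their witnessing measures. Once that edge case is absorbed into~(ii), the rest of the argument is a purely Boolean combination of the two preceding results and requires no new construction.
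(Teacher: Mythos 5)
Your proposal is correct and follows the paper's route exactly: the corollary is obtained by combining Corollary~\ref{cor-ncr} with Theorem~\ref{thm-ncr-semigeneric}, and your remark that complex sequences are automatically non-computable (so that Theorem~\ref{thm-ncr-semigeneric} applies) is precisely the small step the paper leaves implicit; an equally quick route to that fact is via Remark~\ref{rmk-complex-semigeneric}, since the $\Pi^0_1$ class $\{Y\colon(\forall n)\,\K(Y\uh n)\geq f(n)\}$ contains $X$ but no computable member. You have also rightly flagged a genuine looseness that the paper glosses over: semigenericity is defined only for non-computable sequences, so a computable proper $X$ (necessarily a $\mu$-atom by the argument you give) formally satisfies neither alternative, and your resolution — reading computable sequences as vacuously satisfying the semigenericity condition — is the natural convention under which the corollary holds as stated.
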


\begin{remark}\label{rmk-complex-semigeneric}
One can directly prove that complex sequences are not semigeneric without the assumption of properness: As in the proof of Theorem~\ref{thm-complex-continuous2}, for a given  sequence $X\in\cs$ whose complexity is witnessed by a computable order~$f$, the $\Pi^0_1$ class $\P=\{Y\colon(\forall n)[\K(Y\uh n)\geq f(n)]\}$ contains $X$ but no computable sequences.
\end{remark}

One surprising, significant consequence of Corollary \ref{cor-complex-semigeneric} concerns the notions of avoidability and hyperavoidability, systematically studied by Miller.  

\begin{defn}[Miller~\cite{Mil02}]  Let $X\in\cs$.
\begin{itemize}[noitemsep,topsep=0pt]
\item[(i)] Then $X$ is \emph{avoidable} if there is some partial computable function $p$ such that for every computable set $M$ and every index~$e$ for $M$, $p(e)\halts$ and $X\uh p(e)\neq M\uh p(e)$.
\item[(ii)] Moreover, $X$ is \emph{hyperavoidable} if $X$ is avoidable with a total $p$ as above.
\item[(iii)] $X$ is \emph{conditionally avoidable} if $X$ is avoidable but not hyperavoidable.
\end{itemize}
\end{defn}

Miller proved a strong separation of the notions of avoidability and hyperavoidability.

\begin{thm}[Miller~\cite{Mil02}]\label{thm-cond-av}
There are continuum many conditionally avoidable sequences.
\end{thm}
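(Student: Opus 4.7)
The plan is to build, by a finite-extension construction, a perfect subtree $T\subseteq\str$ whose infinite paths are all conditionally avoidable; since $[T]$ has cardinality $2^{\aleph_0}$, this will establish the theorem. I would construct $T$ as the union of an increasing sequence of finite trees $T_0 \subseteq T_1\subseteq \cdots$, where each $T_s$ has $2^s$ leaves of a common length, and simultaneously define a partial computable function $p\colon \omega \to \omega$ meant to serve as a single avoidability witness for every path through $T$.

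At each stage I would attend to two interleaved types of requirements, ordered by priority. A \emph{positive} requirement for an index $e$ aims to make $p$ genuinely witness avoidability: I would wait with a growing time budget for $\phi_e$ to converge on a long $\{0,1\}$-valued initial segment; once it does, I would pick a fresh length $n$ greater than the common length of the current leaves, extend every such leaf to length $n$ so as to force disagreement with $\phi_e\uh n$ at some bit, and set $p(e)=n$. A \emph{negative} requirement, for the $s$-th candidate total computable function $q_s$, refutes $q_s$ as a possible hyperavoidability witness: for each current leaf $\sigma$ of $T_s$ I would use the recursion theorem to reserve a fresh index $e_\sigma$ and have $\phi_{e_\sigma}$ output the eventual extension of $\sigma$ committed to at this stage, padded indefinitely with zeros; then I would wait for $q_s(e_\sigma)$ to converge and extend $\sigma$ to length at least $q_s(e_\sigma)$ while agreeing with $\phi_{e_\sigma}\uh q_s(e_\sigma)$. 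Every path $X$ through the resulting extension then satisfies $X\uh q_s(e_\sigma)=\phi_{e_\sigma}\uh q_s(e_\sigma)$, so $q_s$ fails to witness hyperavoidability of $X$. After both kinds of requirements have acted, I would split each current leaf into two incomparable extensions to preserve the perfect-tree property.

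The main technical obstacle is the apparent clash between the two requirement types, since positive requirements force paths to \emph{disagree} with certain computable sequences at prescribed positions while negative requirements force paths to \emph{agree} with other computable sequences on long initial segments. The resolution is that the indices $e_\sigma$ introduced by a negative requirement are fresh and may be deferred by the positive requirement indefinitely; when the positive requirement for $e_\sigma$ eventually acts, standard bookkeeping on the intervening splits and committed bits ensures that every path through the relevant subtree carries a bit differing from $\phi_{e_\sigma}$ at some common position beyond $q_s(e_\sigma)$, at which $p(e_\sigma)$ is then set. A standard priority ordering makes every requirement act eventually, keeps $T$ perfect, and produces a partial computable $p$ witnessing avoidability for every $X\in[T]$, none of which is hyperavoidable.
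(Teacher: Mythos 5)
The paper cites this theorem from Miller without reproducing a proof, so there is no in-paper argument to compare your attempt against; I can only assess it on its own terms.

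Your outline is sound. Building a perfect tree $T$ whose paths all share a \emph{single} partial computable avoidability witness $p$, while using the recursion theorem to plant indices $e_\sigma$ whose associated computable sets defeat each candidate hyperavoidability witness $q_s$, is exactly the right shape of argument; and since $[T]$ is perfect it has cardinality $2^{\aleph_0}$. You also correctly isolate the main tension --- positive requirements force every path to \emph{disagree} with certain computable characteristic sequences, while negative requirements force long initial \emph{agreement} with others --- and the correct resolution: because each reserved index $e_\sigma$ is fresh (hence large), the positive requirement $P_{e_\sigma}$ sits far down the priority list, so $N_s$ can lock in agreement with $\phi_{e_\sigma}$ up to $q_s(e_\sigma)$ before $P_{e_\sigma}$ ever acts; when $P_{e_\sigma}$ finally does act, it commits a differing bit strictly beyond $q_s(e_\sigma)$ for every leaf and only then defines $p(e_\sigma)$, so both requirements are permanently met with no injury. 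Two points I'd encourage you to spell out rather than relegate to ``standard bookkeeping.'' First, $N_s$ (and $P_e$) must be allowed to wait indefinitely for a convergence that may never come, so the construction has to dovetail: at stage $s$ attend only to requirements of index at most $s$ whose precondition has already been observed, acting on the highest-priority one not yet satisfied; this keeps the construction total even when some $q_s$ is partial. Second, the recursion-theorem step at stage $s$ reserves $2^s$ indices simultaneously, each of which must describe a \emph{total} $\{0,1\}$-valued function; the cleanest choice is $\phi_{e_\sigma} = \sigma 0^\omega$, which is total regardless of whether $q_s(e_\sigma)$ ever converges, and this is what makes $e_\sigma$ a legitimate index for a computable set and hence a legitimate refuter of $q_s$. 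Finally, it is worth noting explicitly that having one $p$ serve every path of $[T]$ is strictly stronger than what avoidability of each individual path demands, and this extra uniformity is precisely what lets a single tree construction produce continuum many examples at once.
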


The following two results show the relevance of avoidability and hyperavoidability to the present discussion.

\begin{thm}[Demuth and Ku\v cera~\cite{DemKuc87}; see also Ku{\v{c}}era, Nies, and Porter~\cite{MR3411165}]\label{thm-demuth-avoidable}
Let $X\in\cs$ be a non-computable sequence.  Then $X$ is avoidable if and only if $X$~is not semigeneric.
\end{thm}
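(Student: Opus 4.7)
The theorem is an equivalence for non-computable $X$, so I would handle the two directions separately. For the direction ``not-semigeneric $\Rightarrow$ avoidable'', suppose $X$ lies in some $\Pi^0_1$ class $\P$ with no computable member, and write $\P = [T]$ for a computable tree $T \subseteq \str$. Define the partial computable function
\[ p(e) = \mu n \bigl[\phi_e(0)\halts,\ldots,\phi_e(n-1)\halts \text{ and } \phi_e\uh n \notin T \bigr]. \]
For any index $e$ of a total computable $\phi_e = M$, the fact that $\P$ has no computable member forces $M \notin [T]$, so some prefix of $M$ escapes $T$ and $p(e)\halts$. Since $X \in [T]$ we have $X\uh p(e) \in T$ while $M\uh p(e) \notin T$, so the two strings differ. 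Thus $p$ witnesses avoidability of $X$.

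The substantive direction is avoidable $\Rightarrow$ not-semigeneric. Given partial computable $p$ witnessing avoidability, my plan is to exhibit a c.e.\ set $V \subseteq \str$ that contains no prefix of $X$ but a prefix of every computable sequence; then $\P := \cs \setminus \llb V \rrb$ is a $\Pi^0_1$ class containing $X$ with no computable member, showing $X$ is not semigeneric. The first-pass candidate is
\[ V_0 = \{\phi_e\uh p(e) : e \in \omega,\ p(e)\halts,\ \phi_e\uh p(e)\halts\}, \]
which is c.e.\ and covers every computable $M$ by applying avoidability at an index of $M$ (placing $M\uh p(e) \in V_0$). The defect is that $V_0$ may contain $X\uh p(e)$ when $\phi_e$ is a \emph{partial} computable function whose defined prefix coincides with $X\uh p(e)$; avoidability only forbids this for \emph{total} $\phi_e$.

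The main obstacle is refining the construction to exclude such spurious prefixes of $X$ while preserving coverage, and this is where the non-computability of $X$ becomes essential. The plan is to use the $s$-$m$-$n$ theorem to totalize potentially-bad segments: for each $(e,\sigma)$ with $p(e)\halts = |\sigma|$ and $\phi_e\uh|\sigma|\halts = \sigma$, effectively produce an index $e^*$ of the total computable extension $\sigma 0^\omega$ (and, more generally, $\sigma \frown \phi_{e'}$ for various $e'$); avoidability applied to $e^*$ yields a string $\phi_{e^*}\uh p(e^*)$ which is provably distinct from $X\uh p(e^*)$, and that safe string is enumerated into the refined set $V$ in place of $\sigma$. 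Since every computable sequence has infinitely many indices under padding, iterating this construction over a sufficiently rich family of paddings should still cover every computable $M$ by safe strings, while avoidability at each $e^*$ ensures no prefix of $X$ is enumerated. The delicate step I expect to be the main technical hurdle is verifying that this refinement simultaneously preserves coverage (every computable sequence contributes a covering prefix through some padded index) and achieves safety (no prefix of $X$ slips in), rather than handling only some of the computable sequences.
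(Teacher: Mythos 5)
The paper cites Theorem~\ref{thm-demuth-avoidable} without giving a proof, so there is no in-paper argument to compare against; I will assess your proposal on its own terms.

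Your first direction (not semigeneric $\Rightarrow$ avoidable) is correct: the function $p$ you define is partial computable, and for any total $\phi_e$ computing a set $M$ the class $\P=[T]$ having no computable members forces some prefix of $M$ to leave $T$, so $p(e)\halts$ and $M\uh p(e)\notin T$ while $X\uh p(e)\in T$.

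The second direction has a real gap, and it is precisely at the step you flag as ``delicate.'' Your two candidate paddings trade one failure for another. Using $\sigma 0^\omega$ only, every produced index $e^*(\sigma)$ is total, so avoidability applies and the enumerated string $\phi_{e^*(\sigma)}\uh p(e^*(\sigma))$ is never a prefix of $X$ (safety holds); but coverage can fail, because you need $p(e^*(\sigma))\leq|\sigma|$ for some $\sigma\prec M$, and nothing forces $p$ to respond that quickly on the indices $e^*(\sigma)$ --- e.g.\ for $M=1^\omega$ the avoidability condition constrains only that $p(e^*(1^n))$ halts and avoids $X$, not that it is $\leq n$. Using $\sigma\frown\phi_{e'}$, coverage becomes plausible, but safety breaks: if $\phi_{e'}$ is partial then $\phi_{e^*(\sigma,e')}$ is partial, avoidability gives no information about $p(e^*(\sigma,e'))$ or about $\phi_{e^*(\sigma,e')}\uh p(e^*(\sigma,e'))$, and that converging initial segment can very well be a prefix of $X$ --- which is exactly the defect of $V_0$ reappearing. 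So the ``rich family of paddings'' idea, as stated, cannot simultaneously achieve both requirements with the $s$-$m$-$n$ theorem alone.

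The missing ingredient is the Recursion Theorem with parameters. Construct a computable $h$ so that $\phi_{h(e)}$ has access to its own index $h(e)$: on input $n$, dovetail $\phi_e(n)$ and $p(h(e))$; if $\phi_e(n)$ halts first output $\phi_e(n)$; if $p(h(e))$ halts first with value $k$, then output $\phi_e(n)$ for $n<k$ (waiting if necessary) and output $0$ for $n\geq k$. Then set
\[
V=\bigl\{\,\phi_e\uh p(h(e)) \;:\; p(h(e))\halts \text{ and } \phi_e\uh p(h(e))\halts\,\bigr\}.
\]
Whenever a string is put into $V$, the values $p(h(e))=k$ and $\phi_e\uh k$ both converge, which by construction makes $\phi_{h(e)}$ total with $\phi_{h(e)}\uh k=\phi_e\uh k$; avoidability applied to the total index $h(e)$ then gives $\phi_e\uh k\neq X\uh k$, so $V$ contains no prefix of $X$. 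Conversely, if $\phi_e$ is total then $\phi_{h(e)}$ is total, so $p(h(e))\halts$ and a prefix of $\phi_e$ enters $V$. Thus $\cs\setminus\llb V\rrb$ is a $\Pi^0_1$ class containing $X$ with no computable members. The self-reference is essential: $\phi_{h(e)}$ must be able to complete itself to a total function \emph{at the moment $p(h(e))$ is seen to converge}, and this cannot be simulated by a fixed family of totalizations chosen in advance of knowing $p$'s behaviour on the index produced.
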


\begin{thm}[Kjos-Hanssen et al.~\cite{KjoMerSte11}]\label{thm-hyperavoidable-complex}
Let $X\in\cs$.  Then $X$ is hyperavoidable if and only if $X$ is complex.
\end{thm}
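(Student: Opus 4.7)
The plan is to prove both directions separately, translating hyperavoidability into a Kolmogorov-complexity bound on initial segments of $X$ through careful bookkeeping of Turing-machine indices.

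For the forward direction ($X$ complex $\Rightarrow$ $X$ hyperavoidable), I take a computable order $f$ witnessing $\K(X \uh n) \geq f(n)$ and define the total computable function $p(e) := \min\{n \colon f(n) > 2 \log e + c\}$ for a sufficiently large constant $c$; this is total since $f$ is unbounded. If $M = \varphi_e$ is a computable set, then $M \uh p(e)$ is effectively reconstructible from $e$ alone (compute $p(e)$, then simulate $\varphi_e$ for $p(e)$ steps), yielding $\K(M \uh p(e)) \leq \K(e) + O(1) \leq 2 \log e + O(1)$; so if $X \uh p(e)$ and $M \uh p(e)$ were to coincide, we would contradict $\K(X \uh p(e)) \geq f(p(e)) > 2 \log e + c$ once $c$ dominates the additive constants. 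Hence $X$ is hyperavoidable via $p$.

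For the backward direction ($X$ hyperavoidable $\Rightarrow$ $X$ complex), I let $p$ be a total computable witness and first reduce to the case that $p$ is non-decreasing (by passing to its non-decreasing envelope, which remains a witness since differing on a prefix of length $p(e)$ entails differing on every longer prefix) and unbounded (otherwise, if $B$ bounds $p$, the constant computable function outputting $(X \uh B) \cdot 0^\omega$ defeats hyperavoidability). Via an appropriately encoded $s$-$m$-$n$ construction, I build a computable map $\tau \mapsto e(\tau)$ from $\str$ to $\omega$ so that $\varphi_{e(\tau)}$ runs the universal prefix-free machine $U$ on $\tau$ and outputs the resulting string padded with zeros, with $e(\tau) \leq 2^{|\tau|+c_1}$ for a fixed constant $c_1$. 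Applied to $\tau_n$, a shortest $U$-program for $X \uh n$, this makes $\varphi_{e(\tau_n)}$ a total $\{0,1\}$-valued function agreeing with $X$ on its first $n$ bits. Hyperavoidability combined with monotonicity of $p$ then forces $p(2^{\K(X \uh n) + c_1}) \geq p(e(\tau_n)) > n$, so $g(n) := \min\{k \colon p(2^{k+c_1}) > n\}$ is a computable order with $\K(X \uh n) \geq g(n)$ for every $n$, showing that $X$ is complex.

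The main obstacle will be the index bookkeeping in the backward direction: one must arrange $\tau \mapsto e(\tau)$ so that $e(\tau)$ sits within $2^{O(|\tau|)}$, since the final complexity bound $g$ is obtained by inverting the composition $k \mapsto p(2^{k+c_1})$; a dove-tailed pairing $\langle a, b \rangle = 2^a(2b+1)-1$ inside the $s$-$m$-$n$ construction achieves the tight bound $e(\tau) \leq 2^{|\tau|+c_1}$. The forward direction carries only a minor constant-chasing issue: the prefix-free description of $p$ itself contributes $O(\log c)$ to the additive overhead, which is easily absorbed by taking $c$ large enough.
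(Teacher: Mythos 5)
The paper does not prove this theorem; it cites it from Kjos-Hanssen, Merkle, and Stephan~\cite{KjoMerSte11} and uses it as a black box, so there is no in-paper proof to compare against. Your argument is a valid direct proof of the equivalence, and both directions are sound. In the forward direction the key observation is exactly right: from an index $e$ one can effectively reconstruct $M\uh p(e)$ (compute $p(e)$, then evaluate $\varphi_e(0),\dotsc,\varphi_e(p(e)-1)$ --- note this is an unbounded-time computation, not a $p(e)$-step simulation, but it halts since $\varphi_e$ is total), giving $\K(M\uh p(e))\leq \K(e)+O(1)\leq 2\log e+O(1)$, and choosing $c$ to absorb the additive constants closes the contradiction. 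A small edge case worth fixing: replace $2\log e$ by $2\log(e+2)$ so $p(e)\geq 1$ for all $e$, including $e=0,1$. In the backward direction the reductions to a non-decreasing, unbounded witness $p$ are correct, and the $s$-$m$-$n$ argument works as stated; in fact the precise bound $e(\tau)\leq 2^{|\tau|+c_1}$ is not essential --- any computable non-decreasing bound $E$ on $e(\tau)$ in terms of $|\tau|$ suffices, since $g(n):=\min\{k\colon p(E(k))>n\}$ is a computable order whenever $p$ is a computable order and $E$ is computable. For the record, \cite{KjoMerSte11} approach this equivalence through the lens of Turing functionals and $\DNC$ functions (complex $\Leftrightarrow$ wtt-computes a $\DNC$ function $\Leftrightarrow$ hyperavoidable), so your proof is the more elementary direct route; both are standard and correct.
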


Combined with Corollary \ref{cor-complex-semigeneric}, Theorems \ref{thm-demuth-avoidable} and \ref{thm-hyperavoidable-complex} immediately yield the following.
\begin{samepage}
\begin{corollary}
Let $X$ be proper and non-computable.  The the following are equivalent.
\begin{itemize}[noitemsep,topsep=0pt]
\item[(i)] $X$ is complex.
\item[(ii)] $X\notin\cNCR$.
\item[(iii)] $X$ is not semigeneric.
\item[(iv)] $X$ is avoidable.
\item[(v)] $X$ is hyperavoidable.
\end{itemize}
In particular, no conditionally avoidable sequence is proper.
\end{corollary}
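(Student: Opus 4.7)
The plan is to assemble the five equivalences by appealing to results already stated in the excerpt, since each pairwise equivalence has essentially been proved. First I would note that (i) $\Leftrightarrow$ (ii) is immediate from Corollary~\ref{cor-ncr}, which is stated precisely for proper sequences. Next, (i) $\Leftrightarrow$ (iii) follows from Corollary~\ref{cor-complex-semigeneric}: since exactly one of ``complex'' or ``semigeneric'' holds for a proper sequence, being complex is the same as failing to be semigeneric. For (iii) $\Leftrightarrow$ (iv), I would invoke Theorem~\ref{thm-demuth-avoidable} of Demuth and Ku\v cera, which, since $X$ is assumed non-computable, gives exactly that avoidability and non-semigenericity are equivalent. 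Finally, (i) $\Leftrightarrow$ (v) is Theorem~\ref{thm-hyperavoidable-complex} of Kjos-Hanssen et al.

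For the ``in particular'' assertion, suppose for contradiction that some conditionally avoidable sequence $X$ were proper. By definition $X$ is avoidable but not hyperavoidable, and in particular $X$ is non-computable (no computable sequence is avoidable, since one may take $M = X$ in the definition of avoidability). Thus $X$ satisfies the hypotheses of the equivalence just established, so (iv) and (v) together force $X$ to be both avoidable and hyperavoidable, contradicting conditional avoidability.

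Since nothing in this argument is delicate, I do not expect any real obstacle: the whole proof is a bookkeeping exercise that combines Corollaries~\ref{cor-ncr} and~\ref{cor-complex-semigeneric} with the cited theorems of Demuth--Ku\v cera and Kjos-Hanssen et al. The only point requiring a moment's care is checking that the non-computability hypothesis is actually needed where it is used (namely in Theorem~\ref{thm-demuth-avoidable} and in the observation that a conditionally avoidable sequence is automatically non-computable), so that the chain of equivalences goes through and the final contradiction is clean.
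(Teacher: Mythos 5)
Your proof is correct and follows the same route the paper takes: it assembles the equivalences directly from Corollaries~\ref{cor-ncr} and~\ref{cor-complex-semigeneric} together with Theorems~\ref{thm-demuth-avoidable} and~\ref{thm-hyperavoidable-complex}, and the observation that a conditionally avoidable sequence is automatically non-computable (since a computable $X$ cannot be avoidable, taking $M=X$) is exactly the point needed to make the ``in particular'' clause go through.
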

\end{samepage}
The relationship between these various concepts is summed up in Figure \ref{fig:diagram}.


\begin{figure}[tb]
\begin{center}
\begin{tikzpicture}[scale=.6]
  \node[minimum width = 3.2cm,minimum height=0.7cm] (notNCRcomp) at (0,0) {$X\not\in \cNCR$};

  \node[minimum width = 3.2cm,minimum height=0.7cm] (complex) at (10,0) {$X$ complex};
  \node[minimum width = 3.2cm,minimum height=0.7cm] (hyperavoid) at (10,-4) {$X$ hyperavoidable};

  \node[minimum width = 3.2cm,minimum height=0.7cm] (notsemigen) at (20,0) {$X$ not semigeneric};
  \node[minimum width = 3.2cm,minimum height=0.7cm] (avoid) at (20,-4) {$X$ avoidable};

  \draw [-implies,double equal sign distance] (notNCRcomp) to node [midway,above] {\scriptsize Theorem \ref{thm-complex-continuous1}} (complex);
  \draw [-implies,double equal sign distance] (complex) to node [midway,above] {\scriptsize Remark \ref{rmk-complex-semigeneric}} (notsemigen);
  \draw [-implies,double equal sign distance] (hyperavoid) to node [midway,above] {\scriptsize by definition} (avoid);
  \draw [-implies,double equal sign distance,dashed]  (notsemigen) to  [out=140,in=40] node [midway,above] {\scriptsize Theorem \ref{thm-ncr-semigeneric} }(notNCRcomp) ;
  
  \draw [implies-implies,double equal sign distance] (complex) to node [midway,right] {\scriptsize Theorem \ref{thm-hyperavoidable-complex}} (hyperavoid);
  \draw [implies-implies,double equal sign distance] (notsemigen) to node [midway,right] {\scriptsize Theorem \ref{thm-demuth-avoidable} } (avoid);
  
  \draw [dotted,thick]  (5,4.5) to  (5,3.4);
  \draw [dotted,thick]  (5,2.8) to  (5,0.7);
  \draw [dotted,thick]  (5,-0.3) to  (5,-5.5) node[below]{\scriptsize Separated by Remark \ref{rmk-no-converse-complex}};  
  
  \draw [dotted,thick]  (15,4.5) to  (15,3.4);
  \draw [dotted,thick]  (15,2.8) to  (15,0.8);
  \draw [dotted,thick]  (15,-0.3) to  (15,-3.2);
  \draw [dotted,thick]  (15,-4.3) to  (15,-5.5) node[below]{\scriptsize Separated by Theorem \ref{thm-cond-av}};
\end{tikzpicture}
\end{center}

\caption{The dashed implication is only true for non-computable proper sequences $X$. Without this assumption we obtain the dotted separations. All other implications are unconditionally true.}
\label{fig:diagram}
\end{figure}
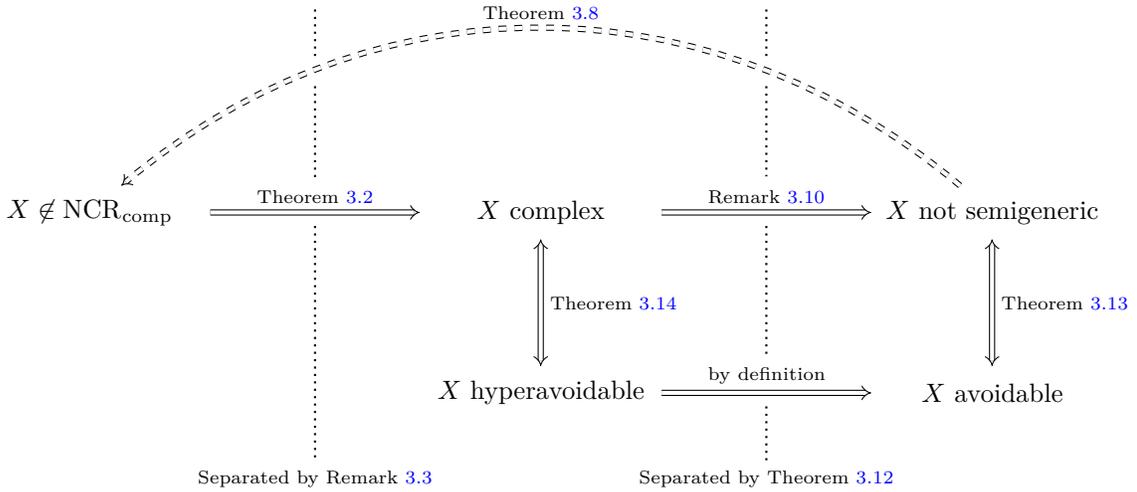

\subsection{The granularity of a continuous, computable measure}\label{sec:granularity}

\begin{defn}[Barmpalias, Greenberg, Montalbán, and Slaman~\cite{BarGreMon11}]
Let $\mu$ be a continuous measure.  The \emph{granularity function of $\mu$}, denoted~$g_\mu$, is the order mapping $n$ to the least $\ell$ such that $\mu(\sigma)<2^{-n}$ for every $\sigma$ of length $\ell$.
\end{defn}

By the following result, $g_\mu$ need not be computable in general.
\begin{prop}
There is a computable, continuous measure $\mu$ such that the granularity $g_\mu$ of $\mu$ is not computable.
\end{prop}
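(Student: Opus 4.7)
The plan is to prove the proposition by constructing, via a diagonal argument, a computable continuous measure $\mu$ whose granularity function $g_\mu$ disagrees with every computable function. Enumerate the partial computable functions as $\phi_0, \phi_1, \ldots$; for each $e$, I aim to meet the requirement $R_e$ that there exists a witness $n_e$ with $g_\mu(n_e) \neq \phi_e(n_e)$. When $\phi_e$ is partial, $R_e$ is automatic (since $g_\mu$ is total), so only the total $\phi_e$'s need concrete treatment. Observe that for computable $\mu$ the predicate $g_\mu(n)=\ell$ is a Boolean combination of $\Sigma^0_1$ statements, hence $g_\mu$ is $\Delta^0_2$; the goal is to arrange for it to be properly $\Delta^0_2$.

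For the construction, I would reserve pairwise disjoint cylinders $B_e = \llb 0^e 1\rrb$ with prescribed mass $\mu(B_e) = 2^{-e-1}$; since $\sum_e 2^{-e-1}=1$, the only point with no reserved mass is $0^\infty$, which we give measure zero. Within each $B_e$, I would mold the sub-measure of total mass $2^{-e-1}$ so that (almost) all of it is concentrated on a deep extension $B_e\cdot 0^{L_e}$, where the parameter $L_e$ is tied to the behaviour of $\phi_e(n_e)$ with $n_e := e+1$. Specifically: if $\phi_e(n_e)$ eventually converges to some value $v$, I arrange for $L_e$ to reach (at least) $v-e$, so that $\mu(B_e\cdot 0^{v-e}) \geq 2^{-e-1} = 2^{-n_e}$. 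This forces a cylinder of length $v+1$ to carry mass $\geq 2^{-n_e}$, so $g_\mu(n_e) > v = \phi_e(n_e)$, satisfying $R_e$. Off the spine $B_e\cdot 0^\infty$ I would distribute the mass uniformly (Lebesgue-like) so as to maintain total consistency.

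Computability of $\mu$ is handled by defining each value $\mu(\sigma)$ through a finite-stage computation depending only on $|\sigma|$: compute by running $\phi_e(n_e)$ for $|\sigma|$ steps, and react accordingly. The main obstacle is maintaining continuity: if $\phi_e(n_e)$ diverges, the naive concentration scheme stabilises at a positive value along $B_e\cdot 0^\infty$ and produces an atom. To rule this out, I would superimpose a slowly-growing forced decay factor (for instance $2^{-\lfloor\log k\rfloor}$) on $\mu(B_e\cdot 0^k)$, guaranteeing $\mu(B_e\cdot 0^k)\to 0$ regardless of halting. The delicate point is to pick this decay rate slow enough that it does not nullify the required concentration when $\phi_e(n_e)$ does halt; this is achievable by choosing the witnesses $n_e$ slightly larger than $e+1$ (say $n_e = e + h(e)$ for a slowly-growing computable $h$) so that there is adequate slack between the concentration strength and the threshold $2^{-n_e}$.

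Once the parameters are balanced, verification is routine: $\mu$ is a computable continuous probability measure by construction, and for each $e$ one checks that either $\phi_e$ is partial (so $\phi_e\neq g_\mu$ trivially) or $\phi_e(n_e)\downarrow=v$ and the preserved concentration yields $g_\mu(n_e) > v$, whence $g_\mu(n_e)\neq\phi_e(n_e)$. In either case $\phi_e\neq g_\mu$, so $g_\mu$ is not computable. The principal technical hurdle is the concentration-vs-dispersion tradeoff outlined above, requiring a careful quantitative coordination between the decay rate and the choice of witnesses.
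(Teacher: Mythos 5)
Your strategy is genuinely different from the paper's. The paper does not diagonalize; it arranges $g_\mu(n) \in \{2n, 2n+1\}$ with the choice coding whether $\phi_n(n)$ halts, so $g_\mu$ directly computes $\emptyset'$. You instead attempt a requirement-by-requirement diagonalization, meeting $R_e\colon g_\mu(n_e)\neq\phi_e(n_e)$ by concentrating mass on a spine $0^e10^\omega$ inside a reserved cylinder. Unfortunately there is a gap in the continuity repair that does not look repairable as stated.

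The problem is a timing conflict between the decay factor and the diagonalization. For continuity you must impose the decay \emph{unconditionally} (otherwise a divergent $\phi_e(n_e)$ leaves an atom on $0^e10^\omega$), so at the moment you define $\mu$ at string length $v+1$ the mass on $0^e10^{\,v-e}$ has already been committed to a value like $2^{-e-1}\cdot 2^{-\lfloor\log(v-e)\rfloor}$. The requirement $\mu(0^e10^{\,v-e})\geq 2^{-n_e}=2^{-e-h(e)}$ then forces $v-e<2^{h(e)}$. But $v=\phi_e(n_e)$ is an output of a total computable function at a witness that you must fix in advance, and nothing bounds it in terms of $e$ and $h$; indeed under the usual conventions the value $v$ can be far smaller than the halting time $t$, so by the time the construction sees the halting, the length-$(v+1)$ cylinder is long since committed and already decayed. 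Replacing $\lfloor\log k\rfloor$ by any slower computable decay (iterated logarithms, etc.) merely replaces $2^{h(e)}$ by a larger but still computable bound, which again cannot dominate an arbitrary $\phi_e$. The only way to guarantee the concentration survives would be to postpone decay until after the halting is observed, which is exactly the naive scheme you correctly reject as non-continuous. There is also a secondary issue you do not address: with a uniformly applied decay, the spine of the heaviest cylinder $B_0$ (mass $1/2$) dominates the granularity at every $n$, forcing $g_\mu(n)\approx 2^n$ from the outside and possibly making $g_\mu$ computable irrespective of what happens in $B_e$ for $e\geq 1$; the spines interfere with one another. The paper's approach avoids both difficulties because it never needs to read off the \emph{value} of a computation, only its halting behavior, and it makes a bounded two-way choice at each level rather than concentrating mass at an unbounded depth.
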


\begin{proof} We provide a sketch of the proof and leave the details to the reader.  The idea is to define $g_\mu(n)$ to be either~$2n$ or~$2n+1$ depending on whether $\phi_n(n)\halts$ or $\phi_n(n)\diverge$.  To do so, for all strings $\sigma$ of length $2n$ except for the two rightmost strings of length $2n$ (denoted $\tau_0$ and $\tau_1$), $\mu(\sigma)$ is  defined to be $\frac{1}{2}\mu(\sigma)$.  For $\tau_0$ and $\tau_1$, we set $\mu_s(\tau_0)=0$ and $\mu_s(\tau_1)=\mu_s(\tau_1^-)$ for every stages $s$ such that $\phi_{n,s}(n)\diverge$; if there is some least stage $t$ such that $\phi_{n,t}(n)\halts$, we set $\mu_t(\tau_0)=2^{-t}$ and $\mu_t(\tau_1)=\mu_t(\tau_1^-)-2^{-t}$.  One can easily verify that the resulting function $g_\mu$ has the desired properties.
\end{proof}

One useful property of the granularity of a measure $\mu$ is that its inverse  $g_\mu^{-1}$ provides a (not necessarily computable) lower bound on the initial segment complexity of the $\mu$-random sequences.

\begin{prop}\label{prop:gran}
Let $\mu$ be a computable, continuous measure and let $X\in\MLR_\mu$.  Then there is some~$c$ such that for every $n$,
$\KA(X\uh g_\mu(n))\geq n-c$ and thus $\KA(X\uh n)\geq g^{-1}_\mu(n)-c$.
\end{prop}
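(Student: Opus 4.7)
The proof will be a direct combination of the $\KA$-version of the Levin-Schnorr theorem (the variant mentioned immediately after Theorem~\ref{thm-levin-schnorr}, due to Levin) with the definitional property of the granularity function, followed by a short monotonicity argument for the second inequality.

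First, since $X\in\MLR_\mu$, the $\KA$-version of Levin-Schnorr gives a constant $c$ with $\KA(X\uh n)\geq -\log\mu(X\uh n)-c$ for every $n$. Next, by the definition of $g_\mu$, every string $\sigma$ of length $g_\mu(n)$ satisfies $\mu(\sigma)<2^{-n}$, so in particular $\mu(X\uh g_\mu(n))<2^{-n}$, i.e.\ $-\log\mu(X\uh g_\mu(n))>n$. Plugging this into the Levin-Schnorr bound at length $g_\mu(n)$ immediately yields
\[
\KA(X\uh g_\mu(n))\;\geq\;-\log\mu(X\uh g_\mu(n))-c\;>\;n-c,
\]
which is the first claimed inequality.

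For the second inequality, the cleanest route is to invoke Lemma~\ref{lem-complex-inverse}(i) with $g=g_\mu$ and $h(n)=n-c$, which directly produces $\KA(X\uh n)\geq g_\mu^{-1}(n)-1-c$; absorbing the $-1$ into the constant gives the desired bound. Alternatively, one can argue by hand using the monotonicity of $\KA$ noted in the excerpt: for any $n$, set $k=g_\mu^{-1}(n)-1$, so that by definition of $g_\mu^{-1}$ we have $g_\mu(k)<n$; then $X\uh g_\mu(k)$ is a proper prefix of $X\uh n$, so monotonicity of $\KA$ combined with the first inequality gives $\KA(X\uh n)\geq \KA(X\uh g_\mu(k))\geq k-c = g_\mu^{-1}(n)-1-c$, and again the constant can be absorbed.

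There is no real obstacle here: the result is essentially an immediate translation of the Levin-Schnorr lower bound through the granularity inequality $\mu(\sigma)<2^{-n}$ for $|\sigma|=g_\mu(n)$, with the second form obtained by inverting via the order-inverse lemma. The only point meriting care is to use the $\KA$-version of Levin-Schnorr rather than the $\K$-version stated in Theorem~\ref{thm-levin-schnorr}, since the statement is phrased in terms of a priori complexity.
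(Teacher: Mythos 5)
Your proof is correct and is essentially the paper's argument: both rest on the $\KA$-form of Levin--Schnorr combined with the defining property $\mu(\sigma)<2^{-n}$ for $|\sigma|=g_\mu(n)$, and then invert. The paper phrases the first step as a contradiction (``suppose for every $c$ there is $n$ with $\KA(X\uh g_\mu(n))<n-c$ \dots'') whereas you argue directly, which is just the contrapositive of the same thing. One small technical remark: $h(n)=n-c$ is not literally an order in the paper's sense (it is negative for $n<c$), so invoking Lemma~\ref{lem-complex-inverse}(i) verbatim is slightly off; the paper sidesteps this by saying ``reasoning similar to'' the lemma, and your hand argument via monotonicity of $\KA$ resolves it cleanly. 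The resulting $-1$ can indeed be absorbed by enlarging $c$ from the start, so the stated bound $\KA(X\uh n)\geq g_\mu^{-1}(n)-c$ holds with a single constant.
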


\begin{proof}
See Appendix \ref{appendix}.
\end{proof}

Despite the fact that $g^{-1}_\mu$ need not be computable, there exists a computable function very close to it.

\begin{lem}\label{lem:gran1}
For every computable, continuous measure $\mu$ on $\cs$, there is a computable order~$f\colon\omega\rightarrow\omega$ such that
${g_\mu(n)\leq f(n)< g_\mu(n+2)}$ for every $n$.
\end{lem}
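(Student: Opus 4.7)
The plan is to let $f(n)$ be the first length at which a purely rational-arithmetic test, based on the given computable approximation $\widetilde \mu$ of $\mu$, certifies that every $\sigma$ at that length has $\mu(\sigma) < 2^{-n}$. Concretely, I would set
\[
f(n) \;=\; \min\bigl\{\ell \in \omega \,:\, \widetilde\mu(\sigma, n+3) + 2^{-(n+3)} < 2^{-n} \text{ for every } \sigma \in \str \text{ with } |\sigma| = \ell\bigr\}.
\]
This is a finite search at each candidate $\ell$---a quantifier over the $2^\ell$ strings of length $\ell$, each check being a comparison of dyadic rationals---so $f$ is computable as soon as the search always terminates. Since $\widetilde\mu(\sigma, n+3) + 2^{-(n+3)} < 2^{-n}$ forces $\mu(\sigma) \leq \widetilde\mu(\sigma, n+3) + 2^{-(n+3)} < 2^{-n}$, any $\ell$ at which the test succeeds already witnesses the defining property of $g_\mu(n)$, which yields the lower bound $g_\mu(n) \leq f(n)$.

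For the upper bound, I would check that the test succeeds no later than $\ell = g_\mu(n+1)$. At that length every $\sigma$ satisfies $\mu(\sigma) < 2^{-(n+1)}$, and a short computation gives
\[
\widetilde\mu(\sigma, n+3) + 2^{-(n+3)} \;\leq\; \mu(\sigma) + 2 \cdot 2^{-(n+3)} \;<\; 2^{-(n+1)} + 2^{-(n+2)} \;=\; 3 \cdot 2^{-(n+2)} \;<\; 2^{-n}.
\]
This both guarantees termination of the search and delivers the sandwich $g_\mu(n) \leq f(n) \leq g_\mu(n+1)$.

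The last step---and, I expect, the main conceptual point, though only a few lines of argument---is to upgrade $f(n) \leq g_\mu(n+1)$ to the strict inequality $f(n) < g_\mu(n+2)$. This reduces to showing that the granularity function is strictly increasing: if $g_\mu(n+1) = g_\mu(n+2) = \ell$, then every $\sigma$ of length $\ell$ has $\mu(\sigma) < 2^{-(n+2)}$, so by finite additivity every $\tau$ of length $\ell-1$ satisfies $\mu(\tau) = \mu(\tau 0) + \mu(\tau 1) < 2 \cdot 2^{-(n+2)} = 2^{-(n+1)}$, contradicting the minimality of $g_\mu(n+1) = \ell$. With this in hand, $f(n) \leq g_\mu(n+1) < g_\mu(n+2)$ as required, and $f$ is an order: monotonicity follows from $f(n) \leq g_\mu(n+1) \leq f(n+1)$ (the latter inequality being the lower bound applied at $n+1$), and unboundedness follows from $f(n) \geq g_\mu(n) \to \infty$.
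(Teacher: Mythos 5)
Your proof is correct and takes essentially the same approach as the paper's: both run a computable search over levels using the dyadic approximation of $\mu$, and both hinge on the strict monotonicity of $g_\mu$, established by the same additivity argument. The only difference is structural --- the paper searches over a pair $\langle k,s\rangle$ with a two-sided test that simultaneously certifies $g_\mu(n)\leq k$ and $k<g_\mu(n+2)$, while you fix the precision at $n+3$, use a one-sided test, and recover the upper bound from the observation that the search must terminate by level $g_\mu(n+1)$.
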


\begin{proof}
See Appendix \ref{appendix}.
\end{proof}

To obtain the desired computable lower bound for the initial segment complexity of the $\mu$\nobreakdash-random sequences for a computable, continuous measure $\mu$, we apply  Lemma \ref{lem:gran0}(i) to the inequality $g_\mu(n)\leq f(n)< g_\mu(n+2)$ from Lemma \ref{lem:gran1} to conclude that $g_\mu^{-1}(n)-2\leq f^{-1}(n)\leq g_\mu^{-1}(n)$.  Combined with Proposition \ref{prop:gran}, this yields the following.

\begin{thm}\label{thm-global-order}
If $\mu$ is a continuous, computable measure, there is a computable order~$h$ such that $|h(n)-g^{-1}_\mu(n)|\leq O(1)$  and for every $X\in\MLR_\mu$, $\K(X\uh n)\geq h(n)$.
\end{thm}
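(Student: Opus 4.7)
The plan is to execute exactly the combination sketched in the paragraph preceding the theorem statement, taking care of one detail: Proposition~\ref{prop:gran} gives a bound on $\KA$, not on $\K$, and the stated conclusion mentions $\K$, so a final cleanup is needed.

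First I would invoke Lemma~\ref{lem:gran1} to obtain a computable order $f$ satisfying $g_\mu(n)\leq f(n) < g_\mu(n+2)$ for every $n$. Applying Lemma~\ref{lem:gran0}(i) with $c=2$ to this two-sided squeeze yields
\[
g_\mu^{-1}(n)-2 \;\leq\; f^{-1}(n) \;\leq\; g_\mu^{-1}(n) \qquad\text{for every }n.
\]
This already shows $|f^{-1}(n)-g_\mu^{-1}(n)|\leq 2$, i.e.\ $f^{-1}$ is within a constant of the (possibly non-computable) function $g_\mu^{-1}$, and $f^{-1}$ itself is a computable order since $f$ is.

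Next I would apply Proposition~\ref{prop:gran} to get a constant $c_0$ with $\KA(X\uh n)\geq g_\mu^{-1}(n)-c_0$ for every $X\in\MLR_\mu$, and combine it with the inequality $\KA(\sigma)\leq \K(\sigma)+c_1$ recorded in the background section (so $\K(\sigma)\geq \KA(\sigma)-c_1$ for a fixed $c_1$). Chaining these yields a uniform constant $C=c_0+c_1+2$ such that
\[
\K(X\uh n) \;\geq\; \KA(X\uh n)-c_1 \;\geq\; g_\mu^{-1}(n)-c_0-c_1 \;\geq\; f^{-1}(n)-C
\]
for every $X\in\MLR_\mu$ and every $n$.

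Finally I would define $h(n):=\max\{f^{-1}(n)-C,\,0\}$. Since $f^{-1}$ is a computable order starting at $0$ and going to infinity, $h$ is still a computable non-decreasing unbounded function with $h(0)=0$, i.e.\ a computable order. By construction $\K(X\uh n)\geq h(n)$ for all $X\in\MLR_\mu$, and
\[
|h(n)-g_\mu^{-1}(n)| \;\leq\; |f^{-1}(n)-g_\mu^{-1}(n)|+C \;\leq\; 2+C \;=\; O(1),
\]
giving the theorem. The only real subtlety is this last bookkeeping step: absorbing the additive constants from $\KA$-to-$\K$ conversion and from Proposition~\ref{prop:gran} into $h$ while preserving the order properties (in particular $h(0)=0$ and monotonicity), which is precisely why we take the max with $0$. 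There is no genuine obstacle beyond this clerical work, since the hard content—controlling $g_\mu^{-1}$ from below by a computable function, and extracting a common complexity lower bound for all $\mu$-random sequences—has already been done in Lemma~\ref{lem:gran1} and Proposition~\ref{prop:gran}.
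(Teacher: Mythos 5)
Your proposal is correct and follows essentially the same route the paper takes: it is precisely the combination of Lemma~\ref{lem:gran1}, Lemma~\ref{lem:gran0}(i), and Proposition~\ref{prop:gran} that the paper sketches in the paragraph immediately preceding the theorem. Your extra care in explicitly invoking $\KA(\sigma)\leq \K(\sigma)+O(1)$ to convert the a~priori bound into a prefix-free bound, and in truncating $h$ at $0$ to keep it an order, is exactly the bookkeeping the paper leaves implicit.
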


\section{Complex proper sequences and atomic measures}\label{sec:atomic}

According to Theorem \ref{thm-complex-continuous2}, if a sequence $X$ is complex and random with respect to some computable measure $\mu$, we can define a computable, continuous measure $\nu$ such that $X$ is random with respect to $\nu$ by removing the atoms from $\mu$; $X$'s~randomness could be preserved in this process by using that $\mu$-atoms are recognizably non-complex, which allowed distinguishing them from~$X$. It is natural to ask whether this removal of atoms can always be carried out while preserving {\em all} non-atomic random sequences simultaneously, again assuming that all of these random sequences are complex. We give a negative answer to this question with the following theorem.

\begin{thm}\label{thm:uniform-atoms2}
There is a computable, atomic measure $\mu$ such that 
\begin{itemize}[noitemsep,topsep=0pt]
\item[(i)] every $X\in\MLR_\mu\setminus\atoms_\mu$ is complex, and
\item[(ii)] there is no computable, continuous measure $\nu$ with $\MLR_\mu\setminus\atoms_\mu\subseteq\MLR_\nu.$
\end{itemize}
\end{thm}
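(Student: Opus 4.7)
We plan to construct $\mu$ by a block-wise diagonalization against an effective enumeration of all computable continuous measures. Using Theorem~\ref{thm-measures-tf}, fix an effective enumeration $(\nu_e)_{e\in\omega}$ of the left-c.e.\ semi-measures induced by Turing functionals, so that every computable measure appears as some $\nu_e$. Reserve the block $\llb 0^e 1 \rrb$ for the requirement $R_e$: if $\nu_e$ is a computable continuous measure, then $\MLR_\mu\cap\llb 0^e 1\rrb$ contains a non-atom element not in $\MLR_{\nu_e}$. On each block we set $\mu(\llb 0^e 1 \rrb)=2^{-(e+1)}$, of which $2^{-(e+2)}$ is placed as an atom at $0^e 1 1 0^\omega$ (ensuring $\mu$ is atomic and $\mu(\{0^\omega\})=0$), and the remaining $2^{-(e+2)}$ is distributed non-atomically over a computable perfect tree $T_e\subseteq\llb 0^e 1\rrb$ built below.

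We build $T_e$ so that $\nu_e([T_e])=0$ whenever $\nu_e$ is a genuine computable continuous measure. Starting from the root $0^e 1$, we extend each current leaf $\sigma$ as follows: consider the four grandchildren $\sigma 00, \sigma 01, \sigma 10, \sigma 11$, and include in $T_e$ the one of smaller $\nu_e$-measure within $\{\sigma 00, \sigma 01\}$ together with the one of smaller $\nu_e$-measure within $\{\sigma 10, \sigma 11\}$ (using sufficiently fine rational approximations of $\nu_e$, breaking ties by a default rule). Since $\nu_e(\sigma 00)+\nu_e(\sigma 01)\leq\nu_e(\sigma 0)$, the chosen grandchild in that pair has $\nu_e$-measure at most $\nu_e(\sigma 0)/2$, and similarly for the other pair; hence each expansion step at least halves the total $\nu_e$-measure of the frontier of $T_e$, so the level-$k$ frontier has $\nu_e$-measure at most $\nu_e(\llb 0^e 1\rrb)\cdot 2^{-k}$, and $\nu_e([T_e])=0$. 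Distribute the $2^{-(e+2)}$ of non-atomic mass uniformly along $T_e$, halving at each branching level; then for $\sigma\in T_e$ with $|\sigma|\geq e+1$, the non-atomic $\mu$-contribution equals $2^{-(e+2)-\lfloor(|\sigma|-e)/2\rfloor}$.

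For condition~(i), let $X$ be a non-atom $\mu$-random. Then $X\in\llb 0^e 1\rrb$ for some unique~$e$, and $X$ must lie on a path of $T_e$, since any sequence in $\llb 0^e 1\rrb\setminus([T_e]\cup\{0^e 1 1 0^\omega\})$ has $\mu(X\uh n)=0$ for all sufficiently large~$n$. Once $X$ branches away from $0^e 1 1 0^\omega$, only the non-atomic contribution remains, so $-\log\mu(X\uh n)=n/2+O(1)$, and the Levin--Schnorr Theorem~\ref{thm-levin-schnorr} gives $\K(X\uh n)\geq n/2-O(1)$, witnessing that $X$ is complex.

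For condition~(ii), suppose towards contradiction that $\nu$ is a computable continuous measure with $\MLR_\mu\setminus\atoms_\mu\subseteq\MLR_\nu$, and pick $e$ with $\nu=\nu_e$. The non-atomic part of $\mu$ on $\llb 0^e 1\rrb$ is supported on $[T_e]$ and has total mass $2^{-(e+2)}>0$, so $\MLR_\mu\cap[T_e]$ contains a non-atom $X$. But $[T_e]$ is a $\Pi^0_1$ class with $\nu_e([T_e])=0$, and Theorem~\ref{thm-kucera} then gives $\MLR_{\nu_e}\cap[T_e]=\emptyset$, contradicting $X\in\MLR_\nu$. The main technical point is carrying out the definition of $T_e$ uniformly in~$e$ with enough approximation precision to guarantee that the frontier-halving argument yields $\nu_e([T_e])=0$, while ensuring that $\mu$ is computable in all cases---even for those indices~$e$ which do not yield a bona fide computable measure, for which $R_e$ is vacuous and no additional argument is needed.
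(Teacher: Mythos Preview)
Your approach—directly diagonalizing against an enumeration of candidate measures by building, for each $e$, a perfect tree $T_e\subseteq\llb 0^e1\rrb$ with $\nu_e([T_e])=0$—is genuinely different from the paper's, which instead diagonalizes against all partial computable functions $\phi_i$: on $\llb0^i1\rrb$ the paper arranges that the local granularity $g_\mu^X$ of every non-atom $\mu$-random $X$ dominates $\phi_i$, and then invokes the granularity machinery (Lemma~\ref{lem-granularity-bound} and Theorem~\ref{thm-global-order}) to rule out any computable continuous $\nu$. Your route is conceptually more direct and avoids that machinery.

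There is, however, a real gap in the proposal as written. You enumerate left-c.e.\ semi-measures $\nu_e=\lambda_{\Phi_e}$ and then ask to ``include in $T_e$ the one of smaller $\nu_e$-measure'' among the grandchildren. But a left-c.e.\ enumeration gives only approximations from below; with those alone you cannot decide which of $\nu_e(\sigma00),\nu_e(\sigma01)$ is smaller, nor certify that a chosen grandchild has measure at most $\nu_e(\sigma0)/2$. So the tree $T_e$ is not computable as described, and hence neither is $\mu$. The sentence flagging ``the main technical point'' does not resolve this; it is precisely the point where the construction currently fails.

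The natural repair is to enumerate instead the partial computable \emph{two-sided} approximation functions $\tilde\nu_e:\str\times\omega\to\binrat$ (so that every computable measure has such an index), and to build $T_e$ with a waiting mechanism: while the needed values of $\tilde\nu_e$ at the required precision have not yet converged, extend each current leaf by a single $0$ (so $\mu$ stays exactly computable), and branch only once they do, choosing precisions $\epsilon_k$ decaying fast enough (e.g.\ $\epsilon_k=8^{-k}$) that the accumulated comparison errors still force the frontier $\nu_e$-mass to tend to $0$. For bad $e$ the tree may branch only finitely often, turning the remaining mass into atoms, which is harmless for condition~(i); for good $e$ the tree is perfect and your Kurtz-style argument goes through. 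Note that this waiting device is exactly the trick the paper uses (waiting for $\phi_i(n)\!\downarrow$ while extending by $0$'s), so once the repair is made the two constructions look more alike than they first appear—though the diagonalization targets remain different.
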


Recall that in Lemma~\ref{lem:gran1} we showed that the granularity function of every computable, continuous measure is dominated by some computable function. Clearly, atomic measures do not have a granularity function, but as a useful tool for proving the above theorem we can define a notion of local granularity, that is, a function that behaves like the granularity of a measure $\mu$ along some a fixed non-$\mu$-atom $X$. 

\begin{defn}  Let a computable measure $\mu$ and $X\notin\atoms_\mu$ be given. The \emph{local granularity function} $g_\mu^X$ of $\mu$ along $X$ is defined to be $g_\mu^X(n) = \min\{ k     \colon  \mu(X\uh k) < 2^{-n}        \}$.
\end{defn}

A key result about the local granularity function of a measure $\mu$ used in the proof of Theorem \ref{thm:uniform-atoms2}  is the following.

\begin{lem}\label{lem-granularity-bound}
Let $\mu$ be a computable measure and $f$ be a computable order.  
Then there is a constant $d$ such that, for all~$X\notin\atoms_\mu$ with 
$\KA(X\uh f(n))\geq n$ for every $n$, we have that 
$f(n+d)\geq g_\mu^X(n)$ for all $n$.
\end{lem}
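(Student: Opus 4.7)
My plan is to compare the a priori complexity of initial segments of $X$ with the $\mu$-measure of the corresponding cylinders; once the right tool is identified, the bound is almost immediate. The key observation is that every computable measure $\mu$ on $\cs$, viewed on $\str$ via $\mu(\sigma)=\mu(\llb\sigma\rrb)$, is in particular a left-c.e.\ semi-measure: the semi-measure axioms are satisfied with equality because $\mu$ is a measure, and left-c.e.\ness follows from computability. Invoking the universality of $M$ therefore yields a constant $c_0$, depending only on $\mu$, such that $\mu(\sigma)\leq 2^{c_0}\cdot M(\sigma)$ for every $\sigma\in\str$; equivalently, $\KA(\sigma)\leq -\log\mu(\sigma)+c_0$.

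The next step is to combine this universal inequality with the hypothesis $\KA(X\uh f(n))\geq n$, applied at $n+d$ in place of $n$. This gives
\[
\mu(X\uh f(n+d))\;\leq\; 2^{c_0}\cdot 2^{-\KA(X\uh f(n+d))}\;\leq\; 2^{c_0-(n+d)}.
\]
Choosing $d=c_0+1$ makes the right-hand side at most $2^{-n-1}<2^{-n}$. Since by definition $g_\mu^X(n)$ is the least $k$ with $\mu(X\uh k)<2^{-n}$, this forces $g_\mu^X(n)\leq f(n+d)$, which is the desired inequality.

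The assumption $X\notin\atoms_\mu$ does no work in this chain of inequalities, but it is needed for $g_\mu^X(n)$ to be defined for every $n$: if $\mu(\{X\})=\delta>0$, then $\mu(X\uh k)\geq\delta$ for all $k$, and no $k$ witnesses $\mu(X\uh k)<2^{-n}$ once $2^{-n}\leq\delta$. There is no real obstacle to overcome here; the main thing to get right is to use the universal semi-measure rather than the Levin--Schnorr theorem, since the conclusion must hold for \emph{every} non-$\mu$-atom $X$ satisfying the complexity bound, not only for $\mu$-random ones, and so an inequality of the form $\KA(\sigma)\leq -\log\mu(\sigma)+O(1)$ that holds pointwise on $\str$ is exactly what is needed.
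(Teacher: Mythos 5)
Your proof is correct and takes essentially the same route as the paper's: both invoke the universality of $M$ over left-c.e.\ semi-measures to obtain a constant $c_0$ with $\mu(\sigma)\leq 2^{c_0}M(\sigma)$, then apply the hypothesis $\KA(X\uh f(m))\geq m$ at a shifted index $m=n+d$ to force $\mu(X\uh f(n+d))<2^{-n}$, and conclude $g_\mu^X(n)\leq f(n+d)$ from the definition of local granularity. The only substantive addition in your write-up is the (correct and worth stating) remark that the non-atom hypothesis is what makes $g_\mu^X$ total, which the paper leaves implicit.
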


\begin{proof}
Since $M$ is a universal semi-measure, there is some constant $e$ such that for all $\sigma \in \str$, $\mu(\sigma)\leq 2^{e} \cdot M(\sigma)$. By the definition of a priori complexity and our assumptions on $X$, we have that for all $n$,
\[
-\log M(X\uh f(n+1))=\KA(X\uh f(n+1))\geq n+1>n,
\]
which implies that $M(X\uh f(n+1))< 2^{-n}$.  Thus 
\[
\mu(X\uh f(n+1))\leq 2^e\cdot M(X\uh f(n+1))<2^{-(n-e)}
\]
and so for every $n$ we have $\mu(X\uh f(n+e+1))<2^{-n}$.  This implies that $g_\mu^X(n)\leq f(n+e+1)$ for every $n$.
\end{proof}

\begin{proof}[Proof of Theorem \ref{thm:uniform-atoms2}]
Let $(\phi_i)_{i\in\omega}$ be an effective enumeration of all partial computable functions.   The key to our construction is to define the measure $\mu$ so that for every $i\in\mathrm{Tot}=\{i\colon\phi_i\text{ is total}\}$, there is some $X\in\MLR_\mu\setminus\atoms_\mu$ such that $g_\mu^X(n)$ is not dominated by $\phi_i$.  We adopt the standard convention that for all $i$ and~$n$, if $\phi_i(n)$ halts in $s$~steps, then $\phi_i(n)\leq s$. We further assume that for $n\geq 0$, the number of stages it takes for~$\phi_i(n+1)$ to halt is always strictly greater than the number of stages it takes for $\phi_i(n)$ to halt.   Lastly, we assume that if $\phi_i(n)\halts$, then $\phi_i(k)\halts$ for all $k<n$.

For each $i\in\omega$ we will define a sequence of natural numbers $(n_i[s])_{s\in\omega}$  such that at stage $s$ we will evaluate the function~$\phi_i$ at $n_i[s]+i$. For each $i\in\omega$, we will also define a sequence of finite sets of strings $(L_i[s])_{s\in\omega}$ which correspond to action taken at stage $s$ to ensure that $\phi_i$ does not bound $g_\mu^X(n)$ for the measure $\mu$ that we are defining and for some specific~$X\in\cs$ to be identified shortly.  More precisely, for each $\tau\in L_i[s]$, we will define $\mu$ on extensions of $\tau$ of the form~$\tau 0^j$ while we are waiting for the computation of $\phi_i(n_i[s]+i)$ to converge (and during this time $L_i[s]$ will not vary as~$s$~grows).  If this specific computation converges at some stage $t$, for every $\tau\in L_i[t-1]$, we will place two incompatible extensions of $\tau 0^k$ into $L_i[t]$, where $k$ encodes some information about the amount of time needed for the computation in question to converge.  

\medskip

\noindent {\em Construction.} At stage $0$, for each $i\in\omega$, we set $L_i[0]=\{0^i1\}$, $n_i[0]=0$, and $\mu(\varepsilon)=1$.

\noindent At stages $s > 0$, supposing that $\mu$ is defined on all strings of length $<s$, we define $\mu$ for strings $\sigma$ of length $s$ as follows:
\begin{itemize}
\item[(i)] If $\sigma=0^s$ or $\sigma=0^{s-1}1$ then we set $\mu(\sigma)=2^{-s}$. 
\item[(ii)] If $\sigma$ is of the form $\tau0^j$ for some $\tau \in L_i[s-1]$ and some $j\in\omega$ and some $i < s$,
then we check whether $\phi_i(n_i[s-1]+i)[s]\!\downarrow$ and $\phi_i(n_i[s-1]+i)[s-1]\!\uparrow$ (that is, $s$ is the least stage such that $\phi_i(n_i[s-1]+i)[s]\halts$). 
\begin{itemize}
\item If not, then we set $\mu(\tau0^j)=\mu(\tau0^{j-1})$ and $\mu(\tau0^{j-1}1)=0$, $L_i[s]=L_i[s-1]$, and $n_i[s]=n_i[s-1]$.

\item If yes, then we set $\mu(\tau0^j)=\mu(\tau0^{j-1}1)=\frac{1}{2}\mu(\tau0^{j-1})$, we 
let $L_i[s] = L_i[s-1] \cup \{\tau0^j, \tau0^{j-1}1\}$, and we set $n_i[s]=n_i[s-1]+1$.
\end{itemize}
\item[(iii)] For all other $\sigma\in 2^{s}$, let $\mu(\sigma)=0$.
\end{itemize}
Lastly, for all $i\geq s$, we let $L_i[s]=L_i[s-1]$.
This finishes the construction.

\medskip

\noindent {\em Verification.} We now verify a series of claims.

\smallskip

\noindent {\em Claim 1.}  If $i\notin\mathrm{Tot}$, then $\MLR_\mu\cap\llb 0^i1\rrb\subseteq\atoms_\mu$.

\smallskip

\noindent {\em Proof of Claim 1.}
If $\phi_i$ is not total, then let $n_i=\lim_{s\rightarrow\infty}n_i[s]$, so that $n_i$ is the least $n$ such that $\phi_i(n+i)\diverge$.   Moreover, let $L_i=\lim_{s\rightarrow\infty}L_i[s]$, which exists by the construction.
Then inside the neighborhood $\llb0^i1\rrb$, $\mu$ is concentrated on a finite number of elements:
\begin{itemize}
\item if $n_i=n_i[0]$, then $\mu$ is concentrated on $\{0^i 10^\omega\}$;
\item otherwise $n_i=n_i[s]$ and $L_i=L_i[s]$, where $s$ is the first stage such that $\phi_i(n_i+i-1)[s]\!\downarrow$.  In this case, $\mu$ is concentrated on sequences of the form $\tau0^\omega$ where $\tau \in L_i$.\end{itemize}
In either case, $\MLR_\mu \cap \llb0^i1\rrb$ consists entirely of $\mu$-atoms.\claimqed

\noindent {\em Claim 2.}  If $i\in\mathrm{Tot}$, then $\llb 0^i1\rrb\cap\atoms_\mu=\emptyset$.

\smallskip

\noindent {\em Proof of Claim 2.}    Assume otherwise and let $X\in \llb0^i1\rrb$ be a $\mu$-atom. By our construction, along any $Y\in\cs$, either $\mu(Y\uh n)\rightarrow 0$ or $\mu(Y\uh n)$ is eventually constant and equal to $2^{-m}$ for some $m$ (as $\mu$ only assigns measures of the form $2^{-m}$ to basic open sets).  Thus, since $X$ is a $\mu$-atom, there is some $m$ such that $\mu(\{X\})=2^{-m}$.  Note also that if $Y\in \llb 0^i1\rrb$ has the property that $\mu(Y\uh n)> 0$ for every $n\in\omega$, it follows that for every $s$, there is some $k_s$ such that $Y\uh k_s\in L_i[s]$.  

Let $X\uh \ell$ be the shortest initial segment of $X$ such that $\mu(X\uh \ell)=2^{-m}$. Then by construction $X\uh \ell$ is added to $L_i[s]$ at some stage $s$. But as $\phi_i$ is total, we know that $\phi_i(n_i[s]+1+i)$ will eventually terminate at some least stage $t$.  At this stage~$t$, there is some $k_t$ such that  $X\uh k_t\in L_i[t]$.  By the construction, at this stage we set $\mu(X\uh k_t)=\frac{1}{2}\mu(X\uh \ell)=2^{-(m+1)}$, which contradicts the assumption that $\mu(\{X\})=2^{-m}$.
\claimqed

\noindent {\em Claim 3.}  If $i\in\mathrm{Tot}$, then for all $X\in\MLR_\mu\cap\llb 0^i1\rrb$, $X$ is complex.

\smallskip

\noindent {\em Proof of Claim 3.}
To see this, first note that every $X\in \llb0^i1\rrb$ such that $\mu(X\uh n)>0$ for every~$n$ can be written as
\[
X=0^i1\,0^{t_0}b_0\, 0^{t_1}b_1\,\dotsc\,0^{t_j}b_j\,\dotsc,
\]
where $b_i\in\{0,1\}$ for each $i$ and the sequence $(t_\ell)_{\ell\in\omega}$ is defined inductively as follows.  First, $t_0$ is the least stage $s$ such that~$\phi_i(i)[s]\halts$.  Having defined $t_0,\dotsc,t_k$, we define $t_{k+1}$ to be the least stage~$s\geq 0$ such that~$\phi_i((k+1)+i)$ halts in~$s+\sum_{j=0}^k t_j$~steps (such an $s$ exists by our convention that the number of stages needed to compute $\phi_i(n+1)$ is strictly greater than the number of stages needed to compute $\phi_i(n)$ for every $i$ and $n$).  Note that the values $(t_\ell)_{\ell\in\omega}$ only depend on the index~$i$ and not on~$X \in \llb0^i1\rrb$.  Moreover, since $\phi_i$ is total, the sequence $(t_\ell)_{\ell\in\omega}$ is computable.

Now let $\Phi$ be the total Turing functional such that for $Y=y_0y_1y_2\dotsc$ 
with $y_i\in \{0,1\}$ for all~$i$
we have
\[
\Phi(Y)=0^i1\,0^{t_0}y_0\,0^{t_1}y_1\dotsc\,0^{t_j}y_j\,\dotsc,
\]
where the $t_\ell$'s are as above.  Note that $\Phi$ is injective, and thus  induces a continuous measure~$\lambda_\Phi$ such that 
$\lambda_\Phi=2^{i+1}\mu\uh_{\llb 0^i1\rrb}$. It is not hard to show that this latter fact implies that $\MLR_{\lambda_\Phi}=\MLR_\mu\cap\llb0^i1\rrb$. 
By Claim 2, it follows that $\lambda_\Phi$ is continuous. But then by Theorem \ref{thm-complex-continuous1}, every $X\in \MLR_{\lambda_\Phi}=\MLR_\mu\cap\llb0^i1\rrb$ is complex, which establishes the claim.
\claimqed

\noindent {\em Claim 4.}  If $i\in\mathrm{Tot}$, then for all $X\in\MLR_\mu\cap\llb 0^i1\rrb$, $g^X_\mu$ is not dominated by $\phi_i$.

\smallskip

\noindent {\em Proof of Claim 4.}
Suppose that $i\in\mathrm{Tot}$ and let $X\in\MLR_\mu\cap\llb0^i1\rrb$ be given.  Then by definition of~$\mu$, $X$ must have the form
\[
X=0^i1\,0^{t_0}b_0\, 0^{t_1}b_1\,\dotsc\,0^{t_j}b_j\,\dotsc,
\]
where the sequence $(t_\ell)_{\ell\in\omega}$ is as above and $b_i\in \{0,1\}$ for every $i\in\omega$. Now given $k\geq 0$, if $n$ is least such that $\mu(X\uh n)<2^{-(i+k)}$, then by our construction we have
\[
X\uh n=0^i1\,0^{t_0}b_0\, 0^{t_1}b_1\,\dotsc\,0^{t_k}b_k.
\]
Recall that  by the definition of the sequence $(t_\ell)_{\ell\in\omega}$, for each $k\geq 0$, $\phi_i(k+i)$ halts in $\sum_{j=0}^{k} t_j$ steps and thus ${\phi_i(k+i)\leq\sum_{j=0}^{k} t_j}$.
It follows from this and the definition of the local granularity~$g_\mu^X$ that
\[
g_\mu^X(i+k)=n>\sum_{j=0}^{k} t_j\geq \phi_i(i+k) .  
\]
Thus for all $k\geq 0$, we have $g_\mu^X(i+k)>\phi_i(i+k)$ and thus $g_\mu^X$ dominates $\phi_i$.
\claimqed

\noindent {\em Claim 5.} There is no computable order~$h$ such that for every $X\in\MLR_\mu\setminus\atoms_\mu$ we have $\KA(X\uh n)\geq h(n)$.

\medskip

\noindent {\em Proof of Claim 5.}
Suppose for the sake of contradiction there is a computable order~$h$ such that for every ${X\in\MLR_\mu\setminus\atoms_\mu}$ we have $\KA(X\uh n)\geq h(n)$ for all $n$. Hence by Lemma \ref{lem-complex-inverse}(ii) we have $\KA(X\uh h^{-1}(n))\geq n$ for every $n$ and every $X\in\MLR_\mu\setminus\atoms_\mu$. Applying Lemma \ref{lem-granularity-bound} to~$h^{-1}$, it follows that there is some $d$ such that for every $X\in\MLR_\mu\setminus\atoms_\mu$ and every $n$ we have $g_\mu^X(n)\leq h^{-1}(n+d)$. As $h^{-1}$ is computable, there is some~$i$ such that $\phi_i$ computes the function~$n \mapsto h^{-1}(n+d)$. But by Claim 4, for every $X\in \MLR_\mu\cap\llb0^i1\rrb$, $g_\mu^X$ is not dominated by~$\phi_i$, contradiction.  
\claimqed

\medskip

We now conclude the proof of Theorem~\ref{thm:uniform-atoms2}:  Suppose there is a computable, continuous measure~$\nu$ such that ${\MLR_\mu\setminus\atoms_\mu\subseteq\MLR_\nu}$.  By Theorem \ref{thm-global-order}, there is a computable order~$h$ such that for every ${X\in\MLR_\nu}$ we have $\KA(X\uh n)\geq h(n)$. In particular this holds for every ${X \in \MLR_\mu\setminus\atoms_\mu}$, which contradicts Claim 5.  
\end{proof}

\section{Non-complex proper sequences}\label{sec-non-complex}

In this section, we study the initial segment complexity of sequences that are random with respect to a computable atomic measure.  We begin by reviewing several notions of non-complexity defined in terms of prefix-free complexity and then prove equivalent formulations in terms of a priori complexity.

\subsection{Notions of non-complexity}\label{subsec-noncomplex}

In this subsection we recall several notions of non-complex\-ity.  The first notion, known as \emph{infinitely often complexity}, was introduced by Hölzl and Merkle~\cite{HolMer10} and further studied by Higuchi and Kihara~\cite{HigKih14}.  The second notion, known as \emph{anti-complexity}, was introduced in Franklin et al.~\cite{FraGreSte13}.  Lastly, we introduce \emph{infinitely often anti-complexity}, which is a natural modification of anti-complexity.  Although the former two notions are typically expressed in terms of plain Kolmogorov complexity, they can be equivalently expressed in prefix-free complexity, which we do here.

\begin{samepage}

\begin{defn} Let $X\in\cs$.
\begin{itemize}[noitemsep,topsep=0pt]
\item[(i)] $X$ is \emph{infinitely often complex} (or \emph{i.o.\ complex}) if there is some computable order~$f$ such that $\K(X\uh f(n))\geq n$ for infinitely many $n$.
\item[(ii)] $X$ is \emph{anti-complex} if for every computable order~$f$ we have $\K(X\uh f(n))\leq n$ for almost every~$n$.
\item[(iii)] $X$ is \emph{infinitely often anti-complex} (or \emph{i.o.\ anti-complex}) if for every computable order~$f$ we have $\K(X\uh f(n))\leq n$ for infinitely many~$n$.
\end{itemize}
\end{defn}

\end{samepage}

It is clear that a sequence is anti-complex if and only if it is not i.o.\ complex, and that a sequence is  i.o.\ anti-complex if and only if it is not complex.  Thus, every sequence is either complex, i.o.\ complex but not complex (and hence i.o.\ anti-complex), or anti-complex.  As we will discuss in Section \ref{sec-non-complex}, there are non-computable proper sequences belonging to each of the latter two classes.

Note that it is not equivalent to define a sequence $X$ to be i.o.\ complex if there is some computable order~$f$ such that $\K(X\uh n)\geq f(n)$ for infinitely many $n$.  For as observed by Hölzl and Merkle~\cite{HolMer10}, every sequence satisfies this latter property.  This follows from the fact that (i) for any $X\in\cs$, $\K(X\uh n)\geq \K(n)-O(1)$ and that (ii) there are computable functions $f$ such that $f(n)\leq \K(n)$ for infinitely many $n$ (for example, Solovay functions; see Bienvenu and Downey~\cite{BieDow09}).  

Similarly, it is not true that $X$ is anti-complex if and only if for every computable order~$f$\!, ${\K(X\uh n)\leq f(n)}$ for almost every $n$.  This follows from the fact established by Bienvenu and Downey~\cite[Theorem 4.3]{BieDow09} that (i)~there is a computable order~$g$ such that $\K{(X\uh n)\leq g(n)+O(1)}$ if and only if $X$ is $\K$-trivial, that is, $\K(X\uh n)\leq \K(n)+O(1)$, and that (ii)~not every anti-complex sequence is $\K$-trivial (Franklin et al.~\cite{FraGreSte13} show that every high degree contains an anti-complex sequence, but every $\K$-trivial sequence has low Turing degree).

By contrast, given that a sequence is i.o.\ anti-complex if and only if it is not complex, it is straightforward to show that $X\in\cs$~is i.o.\ anti-complex if and only if for every computable order~$f$ we have $\K(X\uh n))\leq f(n)$ for infinitely many~$n$.

Just as with complexity, we can equivalently formulate the three notions of non-complexity in terms of a priori complexity.  

\begin{prop}\label{prop-non-complex-KA} Let $X\in\cs$.
\begin{itemize}[noitemsep,topsep=0pt]
\item[(i)] $X$ is i.o.\ complex if and only if there is a computable order~$f$ such that $\KA(X\uh f(n))\geq n$ for infinitely many $n$.
\item[(ii)]  $X$ is anti-complex if and only if for every computable order~$f$, $\KA(X\uh f(n))\leq n$ for almost every $n$. 
\item[(iii)] $X$ is i.o.\ anti-complex if and only if for every computable order~$f$, $\KA(X\uh f(n))\leq n$ for infinitely many $n$.

\end{itemize}
\end{prop}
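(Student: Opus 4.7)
The plan is to derive (iii) from Proposition \ref{prop-complex-strcomplex}, to prove (i) via the standard inequalities between $\K$ and $\KA$, and to deduce (ii) as the contrapositive of (i). This concentrates the technical work in (i).

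For (iii), I will chain together: $X$ is i.o.\ anti-complex iff $X$ is not complex iff (by Proposition \ref{prop-complex-strcomplex}) $X$ is not strongly complex. A short padding argument on small values shows the last condition is equivalent to ``for every computable order $h$, $\KA(X\uh n) < h(n)$ infinitely often.''  I will translate between this condition and the one in (iii) by substituting $h = f^{-1}$ in one direction and $f(n) = \min\{k : h(k) > n\}$ in the other; the prefix-monotonicity of $\KA$ together with the trivial inequalities $f(f^{-1}(n) - 1) < n$ and $h(f(n)) > n$ then transports witnesses between the two formulations.

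For (i), direction $(\Leftarrow)$ is immediate from $\KA \leq \K + O(1)$ after shifting $f$ by a constant. For direction $(\Rightarrow)$, I will invoke the standard companion inequality $\K(\sigma) \leq \KA(\sigma) + \K(|\sigma|) + O(1)$, which gives $\KA(X\uh f(n)) \geq n - \K(f(n)) - O(1) \geq n - O(\log n)$ infinitely often, using that $\K(f(n)) \leq \K(n) + O(1)$ for computable $f$.  Fix a strictly increasing computable order $h$ with $h(n) \leq n - \lceil c\log n\rceil$ for large $n$ so that $\KA(X\uh f(n)) \geq h(n)$ infinitely often. Then $g := f\circ h^{-1}$ is a computable order, and setting $m = h(n)$ for each such $n$, the strict monotonicity of $h$ gives $g(m) = f(n)$, whence $\KA(X\uh g(m)) \geq m$, producing infinitely many witnesses.

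Finally, for (ii), $X$ is anti-complex iff $X$ is not i.o.\ complex iff (by (i)) for every computable order $f$, $\KA(X\uh f(n)) < n$ for almost every $n$; a shift $f \mapsto f(\cdot + 1)$ absorbs the strict inequality into the claimed $\leq n$.  The main obstacle throughout is the logarithmic slack in $\K \leq \KA + \K(|\cdot|)$, which in (i)$(\Rightarrow)$ forces the composition with the inverse of a strictly increasing computable order; the essential verification is that this composition produces a valid computable order whose witness set remains infinite.
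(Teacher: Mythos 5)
The gap is in part (i), direction $(\Rightarrow)$. You want to ``fix a strictly increasing computable order $h$ with $h(n) \leq n - \lceil c\log n\rceil$ for large $n$,'' but no such $h$ exists: any strictly increasing $h\colon\omega\to\omega$ satisfies $h(n)\geq h(0)+n\geq n$, which is incompatible with $h(n)\leq n-\lceil c\log n\rceil$ once $\lceil c\log n\rceil\geq 1$. Strict monotonicity is exactly what you lean on to get $h^{-1}(h(n))=n$ and hence $g(m)=f(n)$, so this step cannot be carried out. Dropping strictness does not save the composition as written either: with $h^{-1}(m)=\min\{k\colon h(k)\geq m\}$ and $m=h(n)$ one has $h^{-1}(m)\leq n$, so $g(m)=f(h^{-1}(m))\leq f(n)$, and then by monotonicity of $\KA$ you only learn $\KA(X\uh g(m))\leq\KA(X\uh f(n))$ --- the inequality runs the wrong way and does not yield $\KA(X\uh g(m))\geq m$.

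The idea is salvageable if you instead use the ``upper'' inverse: taking $g(m)=f\bigl(\max\{k\colon h(k)\leq m\}\bigr)$ gives, for $m=h(n)$ with $n$ a witness, $g(m)\geq f(n)$ and therefore $\KA(X\uh g(m))\geq\KA(X\uh f(n))\geq m$. The paper instead sidesteps the inversion issue entirely: it first converts $\KA(X\uh f(n))\geq n-O(\log n)$ infinitely often into $\KA(X\uh m)\geq g(m)$ infinitely often for a computable order $g$, and then argues by contradiction, using Lemma \ref{lem-complex-inverse-2}(i): if every computable order $h$ had $\KA(X\uh h(n))\leq n$ almost everywhere, then choosing $h$ with $h^{-1}(n)<g(n)$ almost everywhere would give $\KA(X\uh n)\leq h^{-1}(n)<g(n)$ almost everywhere, contradicting the i.o.\ lower bound. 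Your treatment of (iii), routed through the reformulation ``for every computable order $h$, $\KA(X\uh n)<h(n)$ infinitely often,'' is correct and in fact makes explicit a padding step the paper treats tersely via Corollary \ref{cor-sc}; and your derivation of (ii) from (i) is fine, though of course it inherits the dependence on (i).
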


\begin{proof}
See Appendix \ref{appendix}.
\end{proof}

We also have the following additional characterizations of the above notions of non-complexity.

\begin{prop} \label{prop-non-complex-KA-2} Let $X\in\cs$.
\begin{itemize}[noitemsep,topsep=0pt]
\item[(i)] $X$ is i.o.\ complex if and only if there is a computable order~$f$ such that $\KA(X\uh n)\geq f(n)$ for infinitely many~$n$.
\item[(ii)] $X$ is anti-complex if and only if for every computable order~$f$, we have ${\KA(X\uh n)\leq f(n)}$ for almost every~$n$.
\item[(iii)]  $X$ is i.o.\ anti-complex if and only if for every computable order~$f$, we have ${\KA(X\uh n)\leq f(n)}$ for infinitely many~$n$.
\end{itemize}
\end{prop}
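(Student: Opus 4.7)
My plan is to prove all three equivalences by reducing to the corresponding parts of Proposition~\ref{prop-non-complex-KA} via inversion of orders, exploiting the monotonicity of $\KA$ (namely $\KA(\sigma) \leq \KA(\tau)$ whenever $\sigma \preceq \tau$). The unifying idea is that a statement of the form ``$\KA(X \uh n) \lessgtr f(n)$'' can be converted to one of the form ``$\KA(X \uh g(n)) \lessgtr n$'' by evaluating at a suitably chosen $g$ built from $f$, and vice versa, so long as we keep careful track of plateaus in the orders and of which universal/existential quantifier is in play.

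For part~(i), I would handle the forward direction by invoking Proposition~\ref{prop-non-complex-KA}(i) to obtain a computable order $g$ with $\KA(X \uh g(n)) \geq n$ infinitely often, and then take $f = g^{-1}$: setting $m = g(n)$ gives $f(m) = g^{-1}(g(n)) \leq n \leq \KA(X \uh m)$, with the unboundedness of $g$ producing infinitely many distinct $m$. For the reverse direction, I would introduce the computable order $g(k) = \max\{m : f(m) \leq k\}$, which satisfies $g(f(n)) \geq n$; monotonicity of $\KA$ then gives $\KA(X \uh g(k)) \geq \KA(X \uh n) \geq f(n) = k$ for $k = f(n)$, infinitely often, and Proposition~\ref{prop-non-complex-KA}(i) applies.

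Part~(ii) would follow the same template, with ``almost every'' in place of ``infinitely often''. The only subtle point is to check that the ``almost every'' quantifier transfers correctly: since all the auxiliary orders tend to infinity, they push co-finite sets to co-finite sets. Concretely, for the forward direction I would use the same $h(k) = \max\{m : f(m) \leq k\}$ and conclude $\KA(X \uh n) \leq \KA(X \uh h(f(n))) \leq f(n)$ for a.e.\ $n$; for the reverse, applying the hypothesis with $f = g^{-1}$ yields $\KA(X \uh g(k)) \leq g^{-1}(g(k)) \leq k$ for a.e.\ $k$.

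The hardest case will be the reverse direction of part~(iii), where a naive application of the hypothesis with $f = g^{-1}$ only delivers $\KA(X \uh g(k)) \leq k+1$ infinitely often, missing the desired crisp bound by one. I would remedy this off-by-one by applying the hypothesis instead to the slightly smaller computable order $f(n) = \max(g^{-1}(n) - 1, 0)$: for each $n$ in the infinitely-often set with $n > g(0)$, letting $k = g^{-1}(n)$, we have $g(k-1) < n$, and monotonicity of $\KA$ gives $\KA(X \uh g(k-1)) \leq \KA(X \uh n) \leq k - 1$, yielding $\KA(X \uh g(k')) \leq k'$ infinitely often after re-indexing $k' = k - 1$. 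The forward direction of~(iii) parallels part~(i): use $h = f^{-1}$, so that for $k$ in the infinitely-often set and $m = f^{-1}(k)$ one has $\KA(X \uh m) \leq k \leq f(m)$, with infinitely many distinct $m$ since $f^{-1}$ is unbounded. The only true obstacle throughout is the handling of plateaus and the off-by-one in~(iii), which is precisely what the $-1$ adjustment resolves.
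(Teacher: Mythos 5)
Your argument is correct, but it takes a genuinely different route from the paper's. You prove each of the three biconditionals directly by converting between the two forms ``\,$\KA(X\uh n) \lessgtr f(n)$\,'' and ``\,$\KA(X\uh g(k)) \lessgtr k$\,'' via explicit order inversion, reducing each part to the corresponding part of Proposition~\ref{prop-non-complex-KA} and tracking by hand how the ``infinitely often'' and ``almost every'' quantifiers pass through the inverse orders. Your $-1$ adjustment in part~(iii), applying the hypothesis to $f(n) = \max(g^{-1}(n)-1, 0)$ rather than $g^{-1}$ itself so that $g(k-1)<n$ lets monotonicity of $\KA$ close the gap, is exactly the point at which the naive inversion fails, and you handle it correctly. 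The paper instead proves only part~(ii) from scratch, packaging the inversion bookkeeping into Lemma~\ref{lem-complex-inverse-2} (whose clause~(ii) encapsulates precisely the off-by-one adjustment you needed), and then gets parts~(i) and~(iii) essentially for free from the complementarities already established: i.o.\ complex is the negation of anti-complex, and i.o.\ anti-complex is the negation of complex, where part~(iii) is then read off from Corollary~\ref{cor-sc} and Proposition~\ref{prop-complex-strcomplex}. Both routes buy the same result; yours is more self-contained and makes all the quantifier transfers explicit, at the cost of repeating the inversion argument three times, while the paper's trades a little opacity in parts~(i) and~(iii) for a shorter proof that leans on the surrounding lemma structure. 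One small bookkeeping remark: your auxiliary order $g(k)=\max\{m\colon f(m)\leq k\}$ need not satisfy $g(0)=0$ as the paper's convention on orders requires, but this is harmless here since replacing $g$ by $\max(g-g(0),0)$, or simply dropping the convention, leaves every inequality you use intact.
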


\begin{proof}
See Appendix \ref{appendix}.
\end{proof}

Note that if we replace $\KA$ with $\K$ in Proposition \ref{prop-non-complex-KA-2}, the resulting statements are not true by the discussion prior to Proposition \ref{prop-non-complex-KA}.  We now study the relationship between the above notions and proper sequences.

\subsection{Anti-complexity and randomness}  We first discuss anti-complex, proper sequences.  Franklin et al.~\cite{FraGreSte13} showed that every high degree contains an anti-complex sequence.  The converse fails, as an anti-complex sequence need not compute a fast-growing function.  In particular, there is an anti-complex sequence of hyperimmune-free degree.
 However, the situation markedly differs when we restrict 
to proper sequences.  First, we show that every high, random $\wtt$-degree contains an anti-complex proper sequence, and then we prove that every anti-complex proper sequence has high Turing degree.  Recall that $X\in\cs$ has high Turing degree if~$X'\geq\emptyset''$, or equivalently, if $X$ computes a function that dominates every computable function.

\begin{thm}\label{thm:ac-wtt}
Suppose that $X\in\MLR$ and $f\leq_{\mathrm{wtt}}X$ dominates all computable functions.  Then there is
an anti-complex proper sequence $Y\equiv_\T X$.
\end{thm}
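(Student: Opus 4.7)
My plan is to construct $Y$ as a block encoding of $X$ whose block lengths are dictated by $f$, producing an initial-segment-wise compressible sequence that remains $T$-equivalent to $X$, and then to exhibit a suitable computable measure making $Y$ proper. Without loss of generality, I will assume $f$ is strictly increasing, and fix a $\wtt$-functional $\Psi$ with $\Psi^X = f$ and non-decreasing computable use bound $u$ with $u(n) \geq n+1$. The construction defines a Turing functional $\Phi : \subseteq \cs \to \cs$ by $\Phi(Z) = B_0(Z)\, B_1(Z)\, B_2(Z) \cdots$, where $B_n(Z) = Z(n)\, 0^{\Psi^Z(n)}\, 1$; I then set $Y := \Phi(X)$. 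The equivalence $Y \equiv_T X$ follows immediately---$Y \leq_T X$ by construction, and conversely $X \leq_T Y$ by iterative block parsing, since the trailing $1$ of the $n$-th block can be located by a forward search, which simultaneously recovers $X(n)$ and $f(n)$.

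For anti-complexity, I will fix a computable order $g$ and let $m := f^{-1}(g(n))$. The prefix $Y \uh g(n)$ sits within the first $m+1$ blocks of $Y$, so for any $Z$ with $Z \uh u(m) = X \uh u(m)$ we have $\Psi^Z(i) = f(i)$ and $Z(i) = X(i)$ for all $i \leq m$, and hence $\Phi(Z) \succeq Y \uh g(n)$. This inclusion gives $\lambda_\Phi(Y \uh g(n)) \geq 2^{-u(m)}$, so $\KA(Y \uh g(n)) \leq u(m) + C$ for a constant $C$ coming from the universal left-c.e.\ semi-measure. I will then apply the domination hypothesis to the computable function $k \mapsto g(u(k+1) + C + 1)$: since $f(k)$ eventually exceeds it, $f(m-1) > g(u(m) + C + 1)$ for all large $n$, and combining this with $f(m-1) < g(n)$ and the monotonicity of $g$ yields $u(m) + C < n$. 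Hence $\KA(Y \uh g(n)) < n$ almost everywhere, and Proposition~\ref{prop-non-complex-KA}(ii) then gives that $Y$ is anti-complex.

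To obtain properness, I need to exhibit a computable measure $\mu$ with $Y \in \MLR_\mu$. The domination property of $f$ precludes $\Psi$ from being total on $\cs$, since otherwise $i \mapsto \max_{\sigma \in 2^{u(i)}} \Psi^\sigma(i)$ would be a computable upper bound for $f$. Hence $\lambda_\Phi$ is only a left-c.e.\ semi-measure, with right-c.e.\ total mass $\lambda(\dom \Phi) < 1$. I will define $\mu$ by completing $\lambda_\Phi$ into a probability measure, absorbing the mass deficit $1 - \lambda_\Phi(\cs)$ into an atom at a fixed computable sequence such as $0^\omega$, and effectivizing via matched left- and right-c.e.\ approximations so that $\mu(\sigma)$ is computable for every $\sigma$. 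Randomness preservation (Theorem~\ref{thm-rand-pres}) applied to $\Phi$ on its co-null domain then yields $Y \in \MLR_\mu$, completing the properness verification.

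The main obstacle is precisely this last step. The naive candidate $\lambda_\Phi$ is only a semi-measure whose total mass is right-c.e.\ but not a priori computable; promoting it to a genuine computable probability measure requires a delicate simultaneous approximation balancing the continuous and atomic contributions so that the resulting $\mu$ is computable pointwise, all while arranging that the added atomic mass does not introduce a new $\mu$-Martin-L\"of test capturing $Y$. This effective bookkeeping is the technical heart of the proof; by contrast, the anti-complexity estimate and the Turing equivalence are comparatively routine once the block encoding is set up correctly.
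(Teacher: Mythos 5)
Your block-encoding strategy and Turing-equivalence argument are in the same spirit as the paper's proof, and the anti-complexity estimate (via \(\KA(Y\uh g(n)) \leq u(m) + O(1) < n\) using domination) is essentially sound. However, there is a genuine gap precisely where you flag it: you never actually produce a \emph{computable} measure $\mu$ with $Y\in\MLR_\mu$, and the completion you sketch does not go through. Because your blocks have the form $Z(n)\,0^{\Psi^Z(n)}\,1$, the machine must know the exact \emph{value} $\Psi^Z(n)$ before it can commit to the terminating $1$; if $\Psi^Z(n)$ diverges the functional $\Phi$ simply stalls, so $\Phi$ is partial and $\lambda_\Phi$ is only a left-c.e.\ semi-measure. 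Absorbing the deficit $1-\lambda(\dom\Phi)$ into an atom at $0^\omega$ does not yield a computable measure: the values $\lambda_\Phi(\sigma)$ for $\sigma$ off the atom's path remain only left-c.e., and the putative $\mu(0^n)$ involves subtracting the left-c.e.\ quantity $\lambda(\dom\Phi)$, leaving a value that is not computable in general. You correctly identify this as ``the technical heart,'' but you do not solve it, so the properness claim is unjustified.

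The paper sidesteps this entirely by a different block layout that makes the functional \emph{total}: the block for index $n$ is $1^{g(Z,n)}\,0\,z_n$, where $g(Z,n)$ is the \emph{running time} of $\phi^Z(n)$, not its value. Because the machine is outputting $1$'s in lockstep with the simulation of $\phi^Z(n)$, it can always emit another $1$ while waiting; if $\phi^Z(n)$ never halts, the output converges to $\cdots 1^\omega$, which is still an element of $\cs$. Hence $\Gamma$ is a tt-functional, $\lambda_\Gamma$ is automatically a computable probability measure by Theorem~\ref{thm-measures-tf}(i), and $Y=\Gamma(X)\in\MLR_{\lambda_\Gamma}$ follows immediately from randomness preservation (Theorem~\ref{thm-rand-pres}). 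This ordering of the block (unary running-time prefix, then separator, then data bit) is precisely the device that makes the measure come out computable for free; by putting the data bit first and unary-encoding the value rather than the running time, you lose this totality and with it the free properness argument. If you rearrange your blocks accordingly, the rest of your proof would essentially reconstitute the paper's.
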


\begin{proof}
Suppose that $\phi^X=f$, and let $g(Z,n)$ bound the running time of $\phi^Z(n)$ for $Z\in\cs$. For $Z\in\cs$ write $Z=z_0z_1z_2\dotsc$ with $z_i\in \{0,1\}$ for all $i$. We inductively define a functional $\Gamma$ on input $Z\in\cs$ in terms of blocks of output $\gamma_i^Z$ as follows (hereafter we will write each $\gamma_i^Z$ as $\gamma_i$).
For each $i\geq 0$, $\gamma_i$ will either be finite in length or undefined.  For~$n\geq 0$, $\gamma_{n}$ is determined inductively as follows:
\[
\gamma_{n}=
	\left\{
		\begin{array}{ll}
			1^{g(Z,n)}\,0\,z_n & \mbox{if } \phi^Z(n)\halts, \\
			\mbox{undefined} & \mbox{if } \phi^Z(k)\diverge \mbox{ for some $k \leq n$}.
		\end{array}
	\right.
\]
We then define $\Gamma(Z)$ by
\[
\Gamma(Z)=
	\left\{
		\begin{array}{ll}
			\gamma_0\,\gamma_1\,\gamma_2\dotsc & \mbox{if } |\gamma_i|<\infty \mbox{ for every } i\geq 0, \\
			\gamma_0\,\gamma_1\,\gamma_2\dotsc\gamma_{i-1}1^\omega & \mbox{if $i$ is least such that } \gamma_i \mbox{ is undefined.}
		\end{array}
	\right.
\]

If $\phi^Z$ is not total, then there is a least $k$ such that $\phi^Z(k)\diverge$, and 
\[
\Gamma(Z)=1^{g(Z,0)}\,0\,z_0\;\dotsc 1^{g(Z,k-1)}\,0\,z_{k-1}\;1^\omega.
\]
If on the other hand $\phi^Z$ is total, then 
\[
\Gamma(Z)=1^{g(Z,0)}\,0\,z_0\,1^{g(Z,1)}\,0\,z_1\dotsc
\]

Define $\ell(Z,n)=2(n+1)+\sum_{i=0}^ng(Z,i)$, and set $Y=\Gamma(X)$ and $\ell(n)=\ell(X,n)$. If $h$ is the computable function that bounds the use of the computation $\phi^X=f$ (where without loss of generality we can assume that $h$ is an order), then since we can compute each of the values $f(0),\dotsc, f(n)$ from $X\uh h(n)$, it follows that we can compute $Y\uh \ell(n)$ from $X\uh h(n)$ via $\Gamma$.  That is, $(X\uh h(n), Y\uh\ell(n))\in S_\Gamma$, where $S_\Gamma$ is the c.e.\ set of pairs of strings that generates $\Gamma$.  This implies that $\lambda_\Gamma(Y\uh\ell(n))\geq 2^{-h(n)}$.  Since $2^c\cdot M\geq \lambda_\Gamma$ for some $c\in\cs$ by the universality of $M$ among all left-c.e.\ semi-measures, it follows that $2^c\cdot M(Y\uh\ell(n))\geq \lambda_\Gamma(Y\uh\ell(n))\geq 2^{-h(n)}$.  Taking the negative logarithm of both sides yields $\KA(Y\uh\ell(n))\leq h(n)+c$, and hence
$\ell(\ell^{-1}(n))\geq n$ implies, by the monotonicity of~$\KA$, that
\[
\KA(Y\uh n)\leq \KA(Y\uh \ell(\ell^{-1}(n)))\leq h(\ell^{-1}(n))+c.
\]
We claim that $Y$ is anti-complex.  By the characterization of anti-complexity given by Proposition~\ref{prop-non-complex-KA-2}(ii), it suffices to show that $h(\ell^{-1}(n))+c$ is dominated by all computable functions.
First observe that since $f$ (and hence $g$) dominates all computable functions, it follows that $\ell$ dominates all computable functions.  Since $\ell$ is non-decreasing and unbounded, $\ell^{-1}$~is well-defined and is dominated by all computable functions by Corollary \ref{cor:order-dom}.  Given some computable order~$k$, suppose that there are infinitely many $n$ such that
${h(\ell^{-1}(n))\geq k(n)-c}$.
Since $h^{-1}$ is non-decreasing, it follows that there are infinitely many $n$ such that
\begin{equation}\label{eq-ac1}
h^{-1}(h(\ell^{-1}(n))-1)\geq h^{-1}(k(n)-c-1).
\end{equation}
Since $h^{-1}(h(\ell^{-1}(n))-1)$ is the least $j$ such that $h(j)\geq h(\ell^{-1}(n))-1$, it follows that
\begin{equation}\label{eq-ac2}
\ell^{-1}(n)\geq h^{-1}(h(\ell^{-1}(n))-1).
\end{equation}
Combining inequalities (\ref{eq-ac1}) and (\ref{eq-ac2}), it follows that there are infinitely many $n$ such that
\[
\ell^{-1}(n)\geq  h^{-1}(k(n)-c-1).
\]
But this contradicts the fact that $\ell^{-1}$ is dominated by all computable functions.  It follows that $\KA(Y\uh n)\leq k(n)$
for almost every $n$.  Since $k$ was an arbitrary computable order, it follows by Proposition \ref{prop-non-complex-KA-2}(ii) that $Y$ is anti-complex.
\end{proof}

\begin{thm}\label{thm-ac-high}
Let $X\in\cs$ be non-computable, anti-complex, and proper.  Then $X$ is high.
\end{thm}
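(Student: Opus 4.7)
The plan is to show that $X$ computes a function that dominates every total computable function, which by Martin's characterization of highness is equivalent to $X$ being of high Turing degree. The natural candidate is (a computable approximation to) the local granularity function $g_\mu^X$ introduced in Section~\ref{sec:atomic}, where $\mu$ is a computable measure with $X \in \MLR_\mu$; such a $\mu$ exists because $X$ is proper. Since $X$ is non-computable and $\mu$ is computable, Proposition~\ref{prop-kautz} yields $X \notin \atoms_\mu$, whence $\mu(X\uh n) \to 0$ and $g_\mu^X$ is total. The literal definition $g_\mu^X(n) = \min\{k \colon \mu(X\uh k) < 2^{-n}\}$ may fail to be $X$-computable at the edge cases where $\mu(X\uh k) = 2^{-n}$ exactly, but this is routinely circumvented by using the computable approximation $\widetilde\mu$ to define an $X$-computable function $h$ with $h(n) \geq g_\mu^X(n)$ for all $n$.

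It then suffices to show that $g_\mu^X$ itself dominates every computable function. Fix such a function~$\phi$; replacing it with $n \mapsto n + \max_{k \leq n}\phi(k)$ if necessary, we may assume $\phi$ is a computable order. Suppose for contradiction that $g_\mu^X(n) \leq \phi(n)$ for infinitely many $n$. Then for each such $n$ we have $\mu(X\uh{\phi(n)}) \leq \mu(X\uh{g_\mu^X(n)}) < 2^{-n}$ by monotonicity of $\mu$ along initial segments. Applying the $\KA$-version of the Levin--Schnorr theorem (Theorem~\ref{thm-levin-schnorr}, with $\KA$ in place of $\K$ as noted by Levin) furnishes a constant $c$ with $\KA(X\uh m) \geq -\log \mu(X\uh m) - c$ for all $m$; specializing to $m = \phi(n)$ yields $\KA(X\uh{\phi(n)}) > n - c$ for infinitely many $n$, i.e., $\KA(X\uh{\psi(n)}) \geq n$ for infinitely many $n$, where $\psi(n) = \phi(n+c)$ is again a computable order.

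By Proposition~\ref{prop-non-complex-KA}(i), this last statement is precisely the assertion that $X$ is i.o.\ complex. But anti-complexity is by definition the negation of i.o.\ complexity, as noted immediately after the definitions in Section~\ref{subsec-noncomplex}, contradicting the hypothesis on~$X$. Hence $g_\mu^X$, and therefore~$h$, dominates every computable function, proving that $X$ is high. The substantive step is the contradiction argument above, which fuses anti-complexity (preventing any subsequence of the $\KA(X\uh n)$ values from keeping pace with a computable order) with the Levin--Schnorr lower bound (forcing $\mu(X\uh n)$ not to decay too quickly along a random sequence); the approximation needed to make the dominant function genuinely $X$-computable is the only technical wrinkle, and is entirely routine.
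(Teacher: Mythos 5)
Your proof is correct, and it takes a genuinely different (though clearly related) route from the paper's. The paper works directly: by the $\KA$-version of Levin--Schnorr, the $X$-computable order $p(n) = -\log\mu(X\uh n) - c$ satisfies $\KA(X\uh n) \geq p(n)$; setting $f(n) = p^{-1}(2n)$ gives $\KA(X\uh f(n)) \geq 2n$, while anti-complexity yields $\KA(X\uh g(n)) \leq n$ a.e.\ for any computable order $g$, so by monotonicity of $\KA$ the two chains of inequalities force $g(n) \leq f(n)$ a.e., and $f$ is the desired dominant $X$-computable function. You instead package the same information into the local granularity $g_\mu^X$ (which is, up to an additive constant in the argument, essentially $p^{-1}$), and argue by contradiction: if some computable order $\phi$ escaped $g_\mu^X$ infinitely often, then Levin--Schnorr would force $\KA(X\uh\phi(n)) > n - c$ along that subsequence, exhibiting $X$ as i.o.\ complex via Proposition~\ref{prop-non-complex-KA}(i) and contradicting anti-complexity. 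Both proofs hinge on the same two ingredients (Levin--Schnorr for the lower bound, anti-complexity for the upper bound), but the paper's use of the factor-of-two slack $p^{-1}(2n)$ lets it conclude domination directly via monotonicity of $\KA$, while your version trades that for a cleaner conceptual statement (failure of domination $\Rightarrow$ i.o.\ complexity) at the cost of a contradiction argument. Your approach also has the minor advantage of reusing the local granularity machinery already set up in Section~\ref{sec:atomic}; the observation that $g_\mu^X$ itself (not just an a.e.\ bound on it) is not literally $X$-computable, and must be replaced by an $X$-computable upper bound $h \geq g_\mu^X$ via $\widetilde\mu$, is the right technical care and is handled correctly.
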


\begin{proof}
Let $\mu$ be a computable measure such that $X\in\MLR_\mu$.  We show that $X$ computes some function $f$ that dominates all computable functions.  Since $X$ is non-computable it is not an atom of $\mu$, and therefore, by the Levin-Schnorr Theorem~\ref{thm-levin-schnorr}, there is some $X$-computable order~$p(n)$ such that
$\KA(X\uh n)\geq p(n)$
for all $n$.  By Lemma \ref{lem-complex-inverse}(ii), $\KA(X\uh p^{-1}(n))\geq n$ for all $n$, and setting $f(n)=p^{-1}(2n)$, we have $\KA(X\uh f(n))\geq 2n$ for all $n$.  To see that $f(n)=p^{-1}(2n)$ is the desired function, let $g$ be an arbitrary computable order.  Then for almost every $n$,
$\KA(X\uh g(n))\leq n<2n\leq \KA(X\uh f(n))$.
By the monotonicity of $\KA$, $g(n)\leq f(n)$ for almost every $n$.  Since $g$ was arbitrary, it follows that $X$ is high.
\end{proof}

Note that Theorem \ref{thm:ac-wtt} is a partial converse of Theorem \ref{thm-ac-high}.  It is an open question whether the full converse holds.

\begin{question}
If $X\in\MLR$ is high, is there some anti-complex proper $Y\equiv_\T X$?
\end{question}

\subsection{I.o.\ anti-complexity and randomness}

We now turn to i.o.\ anti-complex proper sequences. 
Their existence follows as a corollary from the following result of Bienvenu and Porter~\cite{BiePor12}. Recall that $\cNCR$ is the collection of sequences that are not random with respect to any computable, continuous measure. 

\begin{thm}[Bienvenu and Porter~\cite{BiePor12}]\label{thm-bphi}
Let $\mathbf{a}$ be a random Turing degree.  Then $\mathbf{a}$ contains some proper~${X\in\cNCR}$ if and only if $\mathbf{a}$ is hyperimmune.
\end{thm}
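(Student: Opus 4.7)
The plan splits along the biconditional, with the $(\Leftarrow)$ direction doing most of the work.

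For $(\Rightarrow)$, suppose $X\in\mathbf{a}$ is proper and in $\cNCR$. Since $\mathbf{a}$ is random it is non-computable, hence $X$ is non-computable. The $\KA$-form of the Levin--Schnorr theorem (the variant of Theorem~\ref{thm-levin-schnorr} due to Levin, mentioned immediately after its statement) together with the argument of Proposition~\ref{prop-proper-autocomplex} gives an $X$-computable order~$p$ with $\KA(X\uh n)\geq p(n)$, and Lemma~\ref{lem-complex-inverse}(ii) then yields the $X$-computable order $q:=p^{-1}$ satisfying $\KA(X\uh q(n))\geq n$. On the other hand, Corollary~\ref{cor-ncr} implies $X$ is not complex, so Proposition~\ref{prop-non-complex-KA}(iii) supplies, for every computable order~$g$, infinitely many~$n$ with $\KA(X\uh g(n))\leq n$. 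For any such~$n$, if $g(n)>q(n)$ then strict monotonicity of~$\KA$ along prefixes would give $\KA(X\uh g(n))>\KA(X\uh q(n))\geq n$, contradicting $\KA(X\uh g(n))\leq n$. Hence $g(n)\leq q(n)$ for infinitely many~$n$; applying this to $g+1$ in place of~$g$ shows that the $X$-computable function~$q$ is not dominated by any computable function, so $\mathbf{a}=\deg(X)$ is hyperimmune.

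For $(\Leftarrow)$, fix $Z\in\mathbf{a}\cap\MLR$ together with a hyperimmune $f$ that is truth-table reducible to $Z$ with some computable use $u$; this wtt upgrade of the hyperimmune function guaranteed by hyperimmunity of $\mathbf{a}$ is a preliminary step, available thanks to $\mathbf{a}$ also containing a Martin--L\"of random. I~then build a proper non-complex $X\equiv_T Z$ (which will lie in $\cNCR$ by Corollary~\ref{cor-ncr}) as the padded sequence
\[
X \;=\; 1^{f(0)}\,0\,z_0 \;\; 1^{f(1)}\,0\,z_1 \;\; 1^{f(2)}\,0\,z_2\,\dotsc,
\]
where the $z_i$ are the bits of $Z$. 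The reducibility $X\leq_T Z$ is immediate, and decoding $X$ block by block (scanning maximal runs of $1$'s followed by $0z_i$) gives $Z\leq_T X$. For non-complexity, at the position $n_k=(k+1)+\sum_{i\leq k}f(i)$ just past $z_k$, the prefix $X\uh n_k$ can be reconstructed from $Z\uh u(k)$ together with constantly many bits, so $\K(X\uh n_k)\leq u(k)+O(1)$; meanwhile $n_k\geq f(k)$, and by hyperimmunity of $f$, for any fixed computable order $g$ the computable function $h(k):=g^{-1}(u(k)+O(1))$ is exceeded by $f(k)$ (and hence by $n_k$) infinitely often, so $\K(X\uh n_k)\leq g(n_k)$ for infinitely many $k$, making $X$ not complex.

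The main obstacle is properness. My plan is to mimic the functional construction in the proof of Theorem~\ref{thm:ac-wtt}: build a Turing functional $\Gamma$ with $\Gamma^Z=X$ that is $\lambda$-almost total, so that the pushforward $\mu=\lambda_\Gamma$ is a computable (necessarily atomic) measure by Theorem~\ref{thm-measures-tf}(i), and $X=\Gamma^Z\in\MLR_\mu$ follows from the Randomness Preservation Theorem~\ref{thm-rand-pres}. A direct block-by-block implementation of the $f$-padding is defined only on a $\Pi^0_2$ set, so I~would force totality on its complement by installing a computable fallback padding that activates whenever the simulated computation of $f$ stalls for too long. Because $Z$ is Martin--L\"of random it avoids every $\Pi^0_1$ nullclass (Theorem~\ref{thm-kucera}), so the fallback never triggers along $Z$ and $\Gamma^Z=X$ is preserved, yielding the desired computable measure and a proper non-complex $X\in\mathbf{a}\cap\cNCR$.
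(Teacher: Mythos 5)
The paper does not reprove this theorem of Bienvenu and Porter; it cites it (and the surrounding text shows the two directions via the proof technique of Theorem~\ref{thm-ac-high} for one direction, and a strengthening in Theorem~\ref{thm-ioc-ioac} for the other). So I evaluate your proposal against what the paper's machinery indicates.

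Your $(\Rightarrow)$ direction is correct and matches the route the paper sketches after Corollary~\ref{cor-hi}: from non-complexity you extract, via Proposition~\ref{prop-non-complex-KA}(iii) and the strict monotonicity of $\KA$ together with Lemma~\ref{lem-complex-inverse}(ii), an $X$-computable order $q=p^{-1}$ not dominated by any computable function. (Minor nit: when you pass from $g$ to $g+1$ you leave the class of orders as the paper defines it, since $g(0)=0$ is required; this is easily patched.)

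The $(\Leftarrow)$ direction has a genuine gap. You assume as a ``preliminary step'' that because $\mathbf{a}$ is a hyperimmune random degree you can choose $Z\in\mathbf{a}\cap\MLR$ and a hyperimmune function $f\leq_{\tt}Z$ with computable use~$u$. This upgrade from ``some $Z$-computable function is not dominated'' to ``some $Z$-tt-computable function is not dominated'' is not justified and is not a free consequence of $\mathbf{a}$ containing a Martin-L\"of random. The paper itself signals this is a delicate point: Theorem~\ref{thm:ac-wtt} must explicitly assume a $\wtt$ reduction, and Question~5.10 (whether Turing-domination by a high random can be upgraded to a $\wtt$-domination) is posed as open. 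Your whole non-complexity estimate $\K(X\uh n_k)\leq u(k)+O(1)$ hinges on the computable use bound~$u$, so the gap is load-bearing. The known workaround --- used in the proof of Theorem~\ref{thm-ioc-ioac} --- is to encode not $f(n)$ itself but the \emph{running time} of $\phi^Z(n)$ as the length of the block of $1$'s. By the usual conventions the use of that computation is bounded by the running time, so the resulting functional has use at output position $p$ bounded by~$p$ (and in particular is total, no fallback needed). But then the non-complexity argument can no longer be ``$\K(X\uh n_k)\leq u(k)+O(1)$ vs.\ $n_k\geq f(k)$''; you must instead compare the length of the prefix before a block of $1$'s against the length of the block itself, which requires a domination lemma in the style of Lemma~\ref{lem-orders} and the estimate via Lemma~\ref{lem-KA-K} and $\KA(1^m)=O(1)$. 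Finally, your justification that ``the fallback never triggers along $Z$'' appeals to Theorem~\ref{thm-kucera}, which is about $\Pi^0_1$ nullclasses, while the set on which your functional is genuinely block-by-block defined is $\Pi^0_2$; that appeal does not go through as stated.
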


By Theorem \ref{thm-complex-continuous2}, if $X\in\cNCR$ is proper, then $X$ cannot be complex.  Then by Theorem~\ref{thm-bphi}, we can conclude:

\begin{corollary}\label{cor-hi}
Let $\mathbf{a}$ be a random Turing degree.  Then $\mathbf{a}$ contains an i.o.\ anti-complex proper sequence if and only if $\mathbf{a}$~is~hyperimmune.
\end{corollary}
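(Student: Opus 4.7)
The plan is to derive the result by combining Theorem \ref{thm-bphi} with the characterization of proper sequences already established. The key observation, essentially recorded in the paragraph immediately preceding the corollary, is that for a proper sequence, being i.o.\ anti-complex is equivalent to lying in $\cNCR$. Indeed, by the discussion in Section~\ref{subsec-noncomplex}, a sequence is i.o.\ anti-complex if and only if it is not complex; and by Corollary~\ref{cor-ncr}, a proper sequence fails to be complex if and only if it belongs to $\cNCR$. So the corollary reduces to a direct translation of Theorem \ref{thm-bphi} along this equivalence.

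For the forward direction, I would start with a random Turing degree $\mathbf{a}$ containing an i.o.\ anti-complex proper sequence $X$. Since $X$ is i.o.\ anti-complex, it is not complex, and as $X$ is proper, Theorem \ref{thm-complex-continuous1} (taken contrapositively) forces $X\in\cNCR$. Now Theorem \ref{thm-bphi} applied to the witness $X$ shows $\mathbf{a}$ is hyperimmune. For the reverse direction, suppose $\mathbf{a}$ is a hyperimmune random Turing degree. Theorem \ref{thm-bphi} yields a proper $X\in\mathbf{a}\cap\cNCR$. Then Theorem \ref{thm-complex-continuous2} (in contrapositive form) guarantees that $X$ cannot be complex, and hence $X$ is i.o.\ anti-complex. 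Since there is no real combinatorial content beyond invoking the cited theorems, I do not anticipate any obstacle; the only care needed is to verify that the two implications in the equivalence ``proper and not complex $\iff$ proper and in $\cNCR$'' are each applied in the correct direction.
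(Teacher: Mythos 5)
Your proposal is correct and follows essentially the same route as the paper: the paper also derives the corollary by combining Theorem~\ref{thm-bphi} with the equivalence (for proper sequences) between lying in $\cNCR$ and failing to be complex, which is precisely Corollary~\ref{cor-ncr} (alternatively, the contrapositives of Theorems~\ref{thm-complex-continuous1} and~\ref{thm-complex-continuous2}). The only difference is that you spell out both directions, while the paper leaves the translation mostly implicit; your applications of the two theorems in contrapositive form are all in the correct direction.
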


Note that the proof of Theorem \ref{thm-ac-high} can be used to show that any $X$ that is i.o.\ anti-complex and proper must be of hyperimmune degree, thereby
yielding a proof of the left-to-right direction of Corollary \ref{cor-hi}.

One can directly verify that the sequences constructed in the proof of Theorem \ref{thm-bphi} are i.o\@.~anti-complex.  Moreover, in the case that the degree $\mathbf{a}$ is not high, Theorem \ref{thm-ac-high} guarantees that the i.o\@.~anti-complex proper sequence in $\mathbf{a}$ cannot be anti-complex; that is, such a sequence must be both i.o\@.~anti-complex and i.o\@.~complex.  However, in the case that $\mathbf{a}$ is high, we have more work to do to show that it contains a proper sequence that is both i.o.\ anti-complex and i.o.\ complex.  We will show this by  modifying the proof of Theorem \ref{thm-bphi}.  The main idea is to define a proper sequence that contains infinitely many long blocks of 1's and infinitely many long blocks of unbiased random bits.  Our proof applies to all proper sequences of hyperimmune degree, not just proper sequences of high degree.

\begin{thm}\label{thm-ioc-ioac}
Every hyperimmune random Turing degree contains a proper sequence that is both i.o\@.~complex and i.o\@.~anti-complex.
\end{thm}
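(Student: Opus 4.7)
The plan is to modify the proof of Theorem~\ref{thm-bphi} by Bienvenu and Porter, weaving into their construction enough Martin-L\"of random content to force i.o.~complexity in addition to i.o.~anti-complexity. Fix $X \in \MLR \cap \mathbf{a}$. Since $\mathbf{a}$ is hyperimmune, pick an $X$-computable, strictly increasing function $f\colon\omega\to\omega$ not dominated by any computable function. Choose also a fast-growing computable sequence $(l_k)_{k\in\omega}$, say $l_k = 2^k$, and set
\[
Y = 1^{f(0)}\,0 \cdot X\uh[0, l_0) \cdot 1^{f(1)}\,0 \cdot X\uh[l_0, l_0+l_1) \cdot 1^{f(2)}\,0 \cdots.
\]
The marker blocks $1^{f(k)}0$ provide long 1-runs, which will yield prefixes of low complexity infinitely often, while the random blocks copy long stretches of $X$, which will yield prefixes of high complexity infinitely often.

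First I would verify that $Y \equiv_T X$: the construction is $X$-computable, and conversely $X$ is recoverable from $Y$ by scanning for separator zeros and reading the computable lengths $l_k$ of the random blocks. Next, I would show $Y$ is proper by following Bienvenu and Porter's approach: realize the construction as a Turing functional $\Gamma$ such that $\Gamma(W) = Y$ for some $W \equiv_T X$ and such that the induced measure $\lambda_\Gamma$ is computable; the Randomness Preservation Theorem~\ref{thm-rand-pres} then gives $Y \in \MLR_{\lambda_\Gamma}$.

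For i.o.~complexity, I would look at positions $n_k$ at the end of the $k$-th random block: by Martin-L\"of randomness of $X$, the $L_{k+1} = \sum_{i \leq k} l_i$ bits of $X$ encoded into $Y\uh n_k$ force $\KA(Y\uh n_k) \geq L_{k+1} - O(1)$, and by choosing the hyperimmune function $f$ to oscillate (i.e., to take small values at infinitely many indices while still not being dominated by any computable function) one arranges $\KA(Y\uh n_k) \geq h(n_k)$ for infinitely many $k$ and some computable order $h$. For i.o.~anti-complexity, fix a computable order $g$; writing $q_k$ for the start of the $k$-th marker block, for $0 \leq m < f(k)$ we have $Y\uh(q_k + m) = Y\uh q_k \cdot 1^m$, whence
\[
\K(Y\uh(q_k + m)) \leq \K(Y\uh q_k) + \K(m) + O(1).
\]
Picking $j$ with $g(j) \in [q_k, q_k + f(k))$ and $j$ large enough that $\K(Y\uh q_k) + \K(g(j) - q_k) + O(1) \leq j$ succeeds for infinitely many $k$ by the hyperimmunity of $f$, yielding $\K(Y\uh g(j)) \leq j$ for infinitely many $j$.

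The principal obstacle is establishing properness, since the marker-block lengths $f(k)$ are non-computable and a naive measure assignment would make $Y$ too atypical to be Martin-L\"of random. The Bienvenu-Porter device is to have $\Gamma$ read the marker-block lengths from unary-coded prefixes of the oracle, so that the distribution over block lengths under $\lambda_\Gamma$ is computable (geometric), while the oracle $W$ encoding $X$ together with the $f(k)$'s still produces $Y$. A further delicate issue is simultaneously ensuring i.o.~complexity when $\mathbf{a}$ is high: here one must pick $f$ to oscillate between small and large values (e.g., by alternating between values of an arbitrary $\mathbf{a}$-computable non-dominated function and small values) so that $n_k$ remains controlled at infinitely many $k$, which is what makes the argument work uniformly across \emph{all} hyperimmune random degrees rather than only the non-high ones.
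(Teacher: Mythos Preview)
Your overall shape is right---interleave marker blocks $1^{f(k)}0$ with blocks of the random $X$, then argue properness via randomness preservation---but there are two genuine gaps.

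\textbf{Properness.} Your description of the ``Bienvenu--Porter device'' is not what that device actually does. You propose an oracle $W$ that carries unary codes of the $f(k)$'s; but such a $W$ contains arbitrarily long runs of $1$'s and is certainly not Lebesgue random, so applying $\Gamma$ to $W$ does not give randomness preservation with respect to $\lambda_\Gamma$. The correct device (as in the paper) applies a \emph{total} functional $\Xi$ directly to the Lebesgue-random $X$: the marker-block length at step $n$ is the \emph{running time} of a fixed $\phi^X(n)$, and if $\phi^Z(n)$ diverges for some oracle $Z$ the functional outputs a tail of $1$'s. This makes $\Xi$ total on $\cs$, and $Y=\Xi(X)$ is then $\lambda_\Xi$-random by Theorem~\ref{thm-rand-pres}.

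\textbf{I.o.\ complexity.} With fixed random-block lengths $l_k=2^k$ the argument breaks: at the end of the $k$-th random block the position is $n_k=\sum_{i\le k}(f(i)+1)+L_{k+1}$, and since $f$ is not dominated by any computable function the ratio $L_{k+1}/n_k$ can tend to $0$, so $\KA(Y\uh n_k)\ge L_{k+1}-O(1)$ gives no computable lower bound in $n_k$. You notice the problem and propose letting $f$ ``oscillate'' and take small values infinitely often---but you had already declared $f$ strictly increasing, and in any case $n_k$ depends on $\sum_{i\le k}f(i)$, which is controlled by the \emph{large} previous values of $f$, not the current one. The paper avoids this entirely by letting the $k$-th random block $\tau_k$ have length equal to \emph{everything that precedes it}; then at the positions $k_n=|\sigma_n\tau_n|$ exactly half of $Y\uh k_n$ consists of bits of $X$, so $\K(Y\uh k_n)\ge \tfrac{1}{2}k_n-O(1)$ immediately, with no oscillation trick needed and no dependence on whether $\mathbf a$ is high.
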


\begin{proof}
Given a hyperimmune random Turing degree $\mathbf{a}$, let $X\in\mathbf{a}$ be a Martin-L\"of random sequence and let~$\phi^X$~be an~$X$\nobreakdash-computable function that is not dominated by any computable function.  We will assume that for any $Z\in\cs$, if~$\phi^Z(n)\halts$ in $s$ stages then for all $i<n$, $\phi^Z(i)\halts$ in less than~$s$ stages.  Let $f(Z,n)$ bound the running time of $\phi^Z(n)$; that~is, $f(Z,n)$~is the least stage $s$ such that~$\phi^Z_s(n)\halts$; if no such stage exists, then we set $f(Z,n)=+\infty$.  
Again using the convention that for all~$n$, if $\phi^Z(n)$~halts in $s$ steps, then $\phi(n)\leq s$, it follows that if $\phi^Z$ is total, then $f(Z,\cdot)$ dominates $\phi^Z$.  We will additionally adopt the convention that if $\phi^Z(n)$~halts in $s$ steps, then $n\leq s$.  Note that by our conventions, if $\phi^Z$ is total, then the function $f(Z,\cdot)$ is strictly increasing and $f(Z,n)\geq n$ for every $n$.

Given $f$ as above, we simultaneously define in a recursive way a total Turing functional $\Xi$ on input~$Z\in\cs$, as well as a function $g\colon\cs\times\omega\rightarrow\omega$ and a sequence of strings $(\tau_i)_{i\in\omega}$ that are dependent upon $Z$.
We first define $g$ and the sequence~$(\tau_i)_{i\in\omega}$ as follows.

\begin{itemize}
\item[(i)] For $k=0$, if $f(Z,0)<+\infty$, then set $g(Z,0)=f(Z,0)$ and ${\tau_0=Z\uh (f(Z,0)+1)}$; if ${f(0)=+\infty}$, then set ${g(Z,0)=+\infty}$ and $\tau_0$ is undefined. 
\item[(ii)] For $k=n+1$, if $f(Z,i)<+\infty$ for each $i\leq n+1$, then set $g(Z,n+1)=f(Z,|\tau_n|)$ and let $\tau_{n+1}$~be the block $Z(|\tau_n|)\cdots Z(|\tau_n|+j_{n+1})$, where \[j_{n+1}=\sum_{i=0}^{n+1}2^{(n+1)-i}(g(Z,i)+1);\] 
if $f(Z,i)=+\infty$ for $i\leq n+1$, then set $g(Z,n+1)=+\infty$ and let $\tau_{n+1}$ be undefined.  
\end{itemize}
Note that if $\tau_{n+1}$ is defined, then $|\tau_{n+1}|=j_{n+1}$.  

Next, as in the proof of Theorem \ref{thm:ac-wtt}, we will describe the output $\Xi(Z)$ in terms of blocks of output~$\xi_i^Z$ (which, as in the proof of Theorem \ref{thm:ac-wtt}, will be written as $\xi_i$). 
 For each $i\geq 0$, $\xi_i$ will either be finite in length or undefined.  For $n\geq 0$, $\xi_{n}$~is determined inductively as follows:
\[
\xi_{n}=
	\left\{
		\begin{array}{ll}
			1^{g(Z,n)}\,0\,\tau_n & \mbox{if } \phi^Z(n)\halts, \\
			\mbox{undefined} & \mbox{if } \phi^Z(k)\diverge \mbox{ for some $k\leq n$}.
		\end{array}
	\right.
\]
We then define $\Xi(Z)$ by
\[
\Xi(Z)=
	\left\{
		\begin{array}{ll}
			\xi_0\,\xi_1\,\xi_2\dotsc & \mbox{if } |\xi_i|<\infty \mbox{ for every } i\geq 0, \\
			\xi_0\,\xi_1\,\xi_2\dotsc\xi_{i-1}\,1^\omega & \mbox{if $i$ is least such that } \xi_i \mbox{ is undefined.}
		\end{array}
	\right.
\]

\noindent If $\phi^Z$ is not total, then there is a least $k$ such that $\phi^Z(k)\diverge$, and it is easy to see that in this case
\[
\Xi(Z)=1^{g(Z,0)}\,0\,\tau_0\dotsc 1^{g(Z,k-1)}\,0\,\tau_{k-1}\,1^\omega.
\]
If on the other hand $\phi^Z$ is total, then 
\[
\Xi(Z)=1^{g(Z,0)}\,0\,\tau_0\,1^{g(Z,1)}\,0\,\tau_1\dotsc;
\]
in particular, $\phi^Z$~does not end in a tail of 1's.

To clarify the definitions of $g$ and $(\tau_i)_{i\in\omega}$, we establish the following claim, which states that if $\phi^Z$~is total, then infinitely often at least half of the
bits of $\Xi(Z)$ will consist of runs of bits from~$Z$.

\smallskip

\noindent {\em Claim.}   Suppose that $\phi^Z(i)\halts$ for every $i\leq k$. Let \[\sigma_k=1^{g(Z,0)}\,0\,\tau_0\,1^{g(Z,1)}\,0\,\tau_1\,\dotsc\tau_{k-1}\,1^{g(Z,k)}\,0,\] where $\tau_0\,\tau_1\,\dotsc\,\tau_k\preceq Z$.  Then $|\tau_k|=|\sigma_k|$.

\smallskip

\noindent {\em Proof of Claim.} We prove this by induction.  For $k=0$, $\sigma_0=1^{g(Z,0)}\,0$, hence $|\sigma_0|=g(Z,0)+1=f(Z,0)+1=|\tau_0|$ by definition.  Now suppose that $|\tau_n|=|\sigma_n|$.  
Then 

\begin{align*}
|\sigma_{n+1}|&=|\sigma_n|+|\tau_n|+g(Z,n+1)+1\\
&=2|\tau_n|+g(Z,n+1)+1 & (\text{by induction hypothesis})\\
&=2\sum_{i=0}^{n}2^{n-i}(g(Z,i)+1)+g(Z,n+1)+1 & (\text{by definition of $\tau_n$})\\
&=\sum_{i=0}^{n}2^{(n+1)-i}(g(Z,i)+1)+g(Z,n+1)+1\\
&=\sum_{i=0}^{n+1}2^{(n+1)-i}(g(Z,i)+1)=j_{n+1}=|\tau_{n+1}|,
\end{align*}
as needed.\claimqed

To see that $\Xi$ is total we have two cases to consider.  First, if $\phi^Z$ is a total function, then by the definition of $\Xi$, it is clear that $\Xi(Z)\in\cs$. However, 
if $\phi^Z$ is not a total function, then $\Xi(Z)=\sigma 1^\omega$ for some $\sigma\in\str$, since we will have $g(Z,n)=+\infty$ for some $n$.  

Now let $Y=\Xi(X)$ (where $X\in\MLR$ is the sequence of hyperimmune degree introduced above).  One can readily verify that $g(X,\cdot)$ dominates $f(X,\cdot)$, and hence it follows that  $g(X,\cdot)$ is not dominated by any computable function.  Since $\Xi$ is total, it thus follows by preservation of randomness (Theorem~\ref{thm-rand-pres}) that $Y$ is random with respect to the measure induced by $\Xi$, and hence $Y$ is proper.

To prove that $Y$ is i.o.\ anti-complex, we modify the proof of Bienvenu and Porter~\cite[Proposition 5.9]{BiePor12} to show that $Y$ is not complex.  To simplify our notation, we will write $g(X,n)$ simply as $g(n)$.  By Lemma \ref{lem-KA-K} and the fact that $\KA(1^n)=O(1)$ for every $n$, we have
\begin{equation}\label{eq-ioac1}
\begin{array}{rl}
&\KA(1^{g(0)}\,0\,\tau_0\dotsc1^{g(n)}\,0\,\tau_n\,1^{g(n+1)})\\
\leq& \K(1^{g(0)}\,0\,\tau_0\dotsc1^{g(n)}\,0\,\tau_n)+\KA(1^{g(n+1)})+O(1)\\
\leq& \K(1^{g(0)}\,0\,\tau_0\dotsc1^{g(n)}\,0\,\tau_n)+O(1).
\end{array}
\end{equation}
Note that $g(n)\geq |\tau_i|$ for $i<n$.  Indeed, by the definition of $g$ and the conventions on $f$ laid out above, we have 
\[
g(n)=f(Z,|\tau_n|)> f(Z,|\tau_i|)\geq |\tau_i|
\]
for $i<n$. From this and the fact that $g$ is strictly increasing (since $f$ is strictly increasing), it follows that 
\begin{equation}\label{eq-ioac2}
|1^{g(0)}\,0\,\tau_0\dotsc1^{g(n)}\,0\,\tau_n|\leq (2n+1)g(n)+|\tau_n|+(n+1).
\end{equation}
But since $|\tau_n|=|\sigma_n|=|1^{g(0)}\,0\,\tau_0\dotsc1^{g(n)}\,0|$ by the the above Claim and $g(n)\geq n$, it follows from Equation~(\ref{eq-ioac2}) that 
\begin{equation}\label{eq-ioac3}
\begin{split}
|1^{g(0)}\,0\,\tau_0\dotsc1^{g(n)}\,0\,\tau_n|&\leq (4n+2)g(n)+2(n+1)\\
&\leq (4g(n)+2)g(n)+2(g(n)+1)\\
&=4g(n)(g(n)+1)+O(1).
\end{split}
\end{equation}
Using the fact that $\K(\sigma)\leq|\sigma|+2\log(|\sigma|)+O(1)$,  we can conclude from Equation (\ref{eq-ioac3}) that
\begin{equation}\label{eq-ioac4}
\begin{array}{rl}
&\K(1^{g(0)}\,0\,\tau_0\dotsc1^{g(n)}\,0\,\tau_n)\\
\leq& 4g(n)(g(n)+1)+2\log(4g(n)(g(n)+1))+O(1)\\
\leq& 4(g(n))^2+4g(n)+4\log(g(n))+O(1).
\end{array}
\end{equation}
Let $\ell(n)=4n^2+4n+4\log(n)+c$, where $c$ is a constant that is larger than the sum of the additive  constants in Equations~(\ref{eq-ioac1})~and~(\ref{eq-ioac4}).  Let $h$ be a computable order.  We apply the following lemma, which is a straightforward modification of a result in Bienvenu and Porter~\cite[Lemma 5.8]{BiePor12}.

\begin{lem}\label{lem-orders}
If $g$ is a strictly increasing function that is not dominated by any computable function and $h$~and~$\ell$~are computable orders, then for infinitely many $n$,
$\ell(g(n)) < h(g(n + 1))$.
\end{lem}
Combining Equations (\ref{eq-ioac1}) and (\ref{eq-ioac4}) and applying Lemma \ref{lem-orders} yields, for infinitely many $n$,
\[
\KA(Y\uh g(n+1))\leq \KA(1^{g(0)}\,0\,\tau_0\dotsc1^{g(n)}\,0\,\tau_n\,1^{g(n+1)})\leq \ell(g(n))\leq h(g(n+1)),
\]
where the first inequality is by the monotonicity of a priori complexity. Since $h$ was an arbitrarily chosen computable order, it follows that $Y$ is i.o.\ anti-complex.

Finally we show that $Y$ is i.o.\ complex.  If we set $Y\uh k_n=1^{g(0)}\,0\,\tau_0\,1^{g(1)}\,0\,\tau_1\dotsc\,1^{g(n)}\,0\,\tau_n$ and $X\uh j_n=\tau_0\,\tau_1\,\tau_2\dotsc\tau_n$, then it is clear that one can effectively recover $X\uh j_n$ from $Y\uh k_n$, since each $\tau_i$ has the same length as the string of bits that precedes it in $Y$.  Thus, $\K(Y\uh k_n)\geq \K(X\uh j_n)-O(1)$. For every $n$, applying the above Claim $n$~times, we obtain~$j_n\geq (1/2) k_n$.  Thus, since $X$ is Martin-L\"of random, by the Levin-Schnorr Theorem~\ref{thm-levin-schnorr},
\[
\K(Y\uh k_n)\geq \K(X\uh j_n)\geq j_n-O(1)\geq(1/2)k_n-O(1).
\]
Thus, there exist infinitely many $n$ such that $\K(Y\uh n)\geq (1/2)n-O(1)$ and hence $X$ is i.o.\ complex.  Lastly, $Y\in\mathbf{a}$ since clearly $Y\geq_\T X$.
\end{proof}

\section{Diminutive measures}\label{sec-triv-dim-measures}

In this final section we consider a class of computable measures called \emph{diminutive} measures.  As we will see, diminutive measures are relevant to the study of the initial segment complexity of proper sequences.  Diminutive measures are defined in terms of diminutive~$\Pi^0_1$~classes, introduced by Binns~\cite{Bin08}.

\subsection{Computably perfect and diminutive $\Pi^0_1$ classes}

For a $\Pi^0_1$ class $\mathcal{P}$, recall that the set of {\em extendible nodes} of $\mathcal{P}$ is $\text{Ext}(\mathcal{P})=\{\sigma\in\str\colon (\exists X\in\mathcal{P})[X\succ \sigma]\}$.  As defined in the introduction, $\mathcal{C}\subseteq\cs$ is \emph{computably perfect} if there is a computable, strictly increasing function $f$ such that for each $n\in\omega$ and each $\sigma\in\text{Ext}(\mathcal{P})$ such that $|\sigma|=f(n)$, there exist incomparable $\tau_1,\tau_2\in\text{Ext}(\mathcal{P})$ extending $\sigma$ such that $|\tau_1|=|\tau_2|=f(n+1)$. Moreover, $\mathcal{C}$ is \emph{diminutive} if it does not contain a computably perfect subclass. 

We will restrict our attention to diminutive $\Pi^0_1$ classes. Binns gave two useful characterizations of these classes in terms of complex sequences and $\mathrm{wtt}$-covers, where a collection $\A\subseteq\cs$ is a \emph{$\mathit{wtt}$-cover for $\cs$} if every $X\in\cs$ is $\mathrm{wtt}$-reducible to some~$Y\in\A$.

\begin{thm}[Binns~\cite{Bin08}]\label{thm-binns}
For a $\Pi^0_1$ class $\mathcal{P}\subseteq\cs$, the following are equivalent:
\begin{enumerate}[noitemsep,topsep=0pt]
\item $\mathcal{P}$ contains a computably perfect $\Pi^0_1$ class containing $X$;
\item $\mathcal{P}$ contains a complex element $X$;
\item $\mathcal{P}$ contains a $\mathrm{wtt}$-cover for $\cs$.
\end{enumerate}
\end{thm}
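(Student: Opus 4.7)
The plan is to establish the equivalences via the cycle $(1) \Rightarrow (3) \Rightarrow (2) \Rightarrow (1)$, which is the most natural arrangement given the available tools: the implication $(1) \Rightarrow (3)$ is a direct encoding argument, $(3) \Rightarrow (2)$ follows from preservation of complexity under $\mathrm{wtt}$-reducibility, and $(2) \Rightarrow (1)$ is a $\Pi^0_1$ class construction driven by Kolmogorov counting.

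For $(1) \Rightarrow (3)$, fix a computably perfect $\Pi^0_1$ subclass $\mathcal{Q} \subseteq \mathcal{P}$ with splitting function $f$ and a computable tree $T$ with $[T] = \mathcal{Q}$. I would construct a computable $g \colon \str \to \str$ satisfying $|g(\sigma)| = f(|\sigma|)$, with $g(\sigma) \in T$ extendible in $T$ and $g(\sigma 0), g(\sigma 1)$ being $\prec$-incomparable extensions of $g(\sigma)$ in $T$. Setting $\Phi(Y) = \bigcup_n g(Y \uh n)$ then yields a total computable functional $\Phi \colon \cs \to \mathcal{Q}$, and since $Y \uh n$ can be recovered from $\Phi(Y) \uh f(n)$ by matching against the computable $g$, we have $Y \leq_{\mathrm{wtt}} \Phi(Y)$ with use $f(n)$, making $\Phi(\cs) \subseteq \mathcal{P}$ a wtt-cover of $\cs$. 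The construction of $g$ proceeds stage-by-stage: at each $\sigma$, search for the lex-least pair of incomparable extensions of $g(\sigma)$ in $T \cap 2^{f(|\sigma|+1)}$ that survive to a sufficient depth in the $\Pi^0_1$ approximation of $T$.

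For $(3) \Rightarrow (2)$, pick any Martin-L\"of random sequence $R$, which is complex by the Levin-Schnorr theorem. Since $\mathcal{P}$ contains a wtt-cover, some $Z \in \mathcal{P}$ satisfies $R \leq_{\mathrm{wtt}} Z$, and Lemma~\ref{lem-complex-wtt} then yields that $Z$ is complex. For $(2) \Rightarrow (1)$, given a complex $X \in \mathcal{P}$ with $\K(X \uh n) \geq h(n)$ for some computable order $h$, consider the $\Pi^0_1$ subclass $\mathcal{Q} = \mathcal{P} \cap \{Y : (\forall n)\, \K(Y \uh n) \geq h(n)\}$, which contains $X$ and consists entirely of complex, hence non-computable, sequences. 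I would then exhibit a computable splitting function by Kolmogorov counting: because at most $2^{h(n)}$ strings of length $n$ fall below the complexity threshold, choosing $f$ to grow sufficiently faster than $h$ ensures that above any extendible $\sigma \in \mathcal{Q}$ at level $f(n)$ the number of complex extensions at level $f(n+1)$ exceeds the excluded count by enough of a margin to furnish two incomparable extendible extensions.

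The chief obstacle is expected to be the extendibility issue in $(1) \Rightarrow (3)$: although computable perfectness guarantees that incomparable extendible extensions exist at each prescribed level, extendibility in $T$ is only $\Pi^0_1$, not computable. Making the stage-by-stage construction of $g$ terminate in finite time requires combining the $\Pi^0_1$ approximation of the tree with the effectivity of $f$ through a careful commit-and-wait argument. An analogous but slightly milder difficulty arises in $(2) \Rightarrow (1)$, where one must ensure the Kolmogorov counting survives the passage to extendible (rather than merely present) extensions, which is handled by iterating the counting through a range of levels rather than at a single one.
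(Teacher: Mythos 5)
This theorem is cited from Binns~\cite{Bin08} and the paper itself gives no proof, so there is no internal argument to compare against; what follows is an assessment of your proposal on its own terms.

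Your choice of cycle is sensible and the implication $(3)\Rightarrow(2)$ is correct as written: a Martin-L\"of random $R$ is complex by Levin--Schnorr, and since some $Z\in\mathcal{P}$ has $R\leq_{\mathrm{wtt}} Z$, Lemma~\ref{lem-complex-wtt} transfers complexity upward to $Z$.

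The genuine difficulty lies in the other two arrows, and in both cases you identify the right obstacle but your proposed resolutions do not clearly close the gap. In $(1)\Rightarrow(3)$ you want a computable $g$ with $g(\sigma)$ an \emph{extendible} node of $T$; but extendibility is a $\Pi^0_1$ predicate, and there is no finite amount of waiting after which you can commit to a node being extendible --- if the candidate is in fact a dead end, a ``commit-and-wait'' loop simply never returns. The fix cannot be to certify extendibility. One has to abandon the requirement that $g(\sigma)$ be extendible and instead arrange, by a purely combinatorial/counting accounting of how many extensions survive to each level $f(m)$, that the image tree spanned by $g$ still has every node infinitely extendible; this requires a more careful definition of the target tree (or a proportional allocation of codes to nodes weighted by their branching) than the lex-least-two-survivors selection you describe. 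Similarly, in $(2)\Rightarrow(1)$ the Kolmogorov counting gives you at least two extensions \emph{in the tree} $S$ of $\mathcal{Q}$ at level $f(n+1)$ above an extendible $\sigma$ at level $f(n)$, since fewer than $2^{h(f(n+1))}$ strings of that length have complexity below $h(f(n+1))$ and the unique extendible extension would otherwise be cheaply describable; but ``in the tree'' is not ``extendible,'' and the description argument for the putative unique extendible extension does not halt (you would have to know when all the other branches die out). Your remark about ``iterating the counting through a range of levels'' is gesturing at the right idea, but as written it does not show how to produce two \emph{extendible} extensions at a computably prescribed level, which is exactly what the definition of computably perfect requires. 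Both implications need the argument spelled out at exactly the point you flag as an obstacle; as it stands the proposal is a plan with the hard part deferred.
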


As a brief aside, we prove a hitherto unnoticed consequence of Theorem \ref{thm-binns}  and Lemma~\ref{lem-complex-wtt}.

\begin{prop}
Let $\mu$ be a computable measure.  If there is a complex $X\in\MLR_\mu$, then there are continuum many complex $X\in\MLR_\mu$.
\end{prop}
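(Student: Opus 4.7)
The plan is to combine the universal Martin-Löf test with the wtt-cover characterization of Theorem~\ref{thm-binns}. First I would fix a universal $\mu$-Martin-Löf test $(\U_i)_{i\in\omega}$, so that $\MLR_\mu=\bigcup_i \U_i^c$. Since the complex sequence $X$ lies in $\MLR_\mu$, there is some $i_0$ with $X\in\U_{i_0}^c$. The set $\U_{i_0}^c$ is thus a $\Pi^0_1$ class containing a complex element, and hence Theorem~\ref{thm-binns} applies: $\U_{i_0}^c$ contains a $\mathrm{wtt}$-cover $\A$ for $\cs$.

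Next I would use the $\mathrm{wtt}$-cover property together with Lemma~\ref{lem-complex-wtt}. By definition of a $\mathrm{wtt}$-cover, every $Z\in\cs$ is $\mathrm{wtt}$-reducible to some $Y\in\A$; for each complex $Z\in\cs$ fix one such witness $Y_Z\in\A$. Since $Z\leq_{\wtt} Y_Z$ and $Z$ is complex, Lemma~\ref{lem-complex-wtt} guarantees that $Y_Z$ is complex as well. Moreover $Y_Z\in\A\subseteq\U_{i_0}^c\subseteq\MLR_\mu$, so each $Y_Z$ is a complex element of $\MLR_\mu$.

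Finally, to argue that the family $\{Y_Z : Z\text{ complex}\}$ has cardinality $2^{\aleph_0}$, I would reason by a counting argument: for each fixed $Y\in\A$, the set $\{Z : Z\leq_{\wtt} Y\}$ is countable because there are only countably many $\mathrm{wtt}$-functionals. On the other hand there are continuum many complex sequences in $\cs$, since every $\lambda$-Martin-Löf random sequence is complex (by Theorem~\ref{thm-complex-continuous1} applied to $\lambda$, or directly by the Levin--Schnorr Theorem~\ref{thm-levin-schnorr}) and $\MLR$ has full Lebesgue measure. Therefore the assignment $Z\mapsto Y_Z$ cannot pack continuum many complex sources into countably many targets, so $\{Y_Z\}$ must itself have cardinality $2^{\aleph_0}$, producing continuum many complex sequences in $\MLR_\mu$.

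I do not expect a genuine obstacle here; the only subtle point is recognizing that Theorem~\ref{thm-binns} must be applied not to $\cs$ itself but to some component $\U_{i_0}^c$ of the universal test, in order that the resulting $\mathrm{wtt}$-cover automatically sits inside $\MLR_\mu$. Once that is set up, Lemma~\ref{lem-complex-wtt} transfers complexity upward along the $\mathrm{wtt}$-reductions and the cardinality bookkeeping finishes the argument.
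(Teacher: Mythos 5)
Your proof is correct and follows essentially the same route as the paper's: pick $\U_{i_0}^c$ from the universal test containing $X$, apply Binns's theorem to get a $\wtt$-cover inside $\U_{i_0}^c$, push complexity upward along $\wtt$-reductions via Lemma~\ref{lem-complex-wtt}, and conclude by cardinality. You make the final counting step (countably many $\wtt$-predecessors per sequence, continuum many complex sources) explicit where the paper leaves it implicit, but the argument is identical in substance.
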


\begin{proof}
If $(\U_i)_{i\in\omega}$ is a universal $\mu$-Martin-L\"of test, then since $X\in\MLR_\mu$ there is some $i$ such that~$X\in\U_i^c$.  Since $\U_i^c$~is a $\Pi^0_1$ class and $X$ is a complex member of $\U_i^c$, by Theorem \ref{thm-binns}, $\U_i^c$~contains a $\mathrm{wtt}$-cover for $\cs$.  In particular, for every complex $Y\in\cs$, there is some $Z\in\U_i^c$ such that $Y\leq_{\mathrm{wtt}}Z$.  But by Lemma \ref{lem-complex-wtt}, being complex is closed upwards under~$\leq_{\mathrm{wtt}}$, and since $Y$ is complex, so is $Z$.  Thus $\U_i^c$ contains continuum many complex sequences.  But since $\U_i^c$ consists entirely of $\mu$-Martin-L\"of random sequences, it follows that $\MLR_\mu$ contains continuum many complex sequences.
\end{proof}

From the definition of diminutive classes we can derive a natural definition of diminutive measures.

\begin{defn}[Porter~\cite{Por12}]
Let $\mu$ be a computable measure, and let $(\U_i)_{i\in\omega}$ be a universal $\mu$-Martin-L\"of test.  Then we say that $\mu$ is \emph{diminutive} if $\U_i^c$ is a diminutive $\Pi^0_1$ class for every $i$.
\end{defn}

\begin{prop}\label{prop-diminutive}
A computable measure $\mu$ is diminutive if and only if there is no complex $X\in\MLR_\mu$.
\end{prop}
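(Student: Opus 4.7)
The plan is to observe that Proposition~\ref{prop-diminutive} is essentially a direct consequence of Binns' Theorem~\ref{thm-binns} (specifically the equivalence of (1) and (2)) combined with the fact that the complement of any level of a universal Martin-L\"of test consists entirely of random sequences. Fix a universal $\mu$-Martin-L\"of test $(\U_i)_{i\in\omega}$ and note the standing observation that $\U_i^c \subseteq \MLR_\mu$ for every $i$, since any $X \in \U_i^c$ avoids $\U_i$ and hence avoids $\bigcap_{j} \U_j$.

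For the forward direction, I would assume $\mu$ is diminutive and suppose toward contradiction that there exists a complex $X \in \MLR_\mu$. Since $X$ is $\mu$-random it lies in $\U_i^c$ for some $i$, so $\U_i^c$ is a $\Pi^0_1$ class containing a complex element. By the equivalence of clauses (1) and (2) in Theorem~\ref{thm-binns}, $\U_i^c$ then contains a computably perfect $\Pi^0_1$ subclass, contradicting the assumption that $\U_i^c$ is diminutive.

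For the reverse direction, I would assume there is no complex $X \in \MLR_\mu$ and show that each $\U_i^c$ is diminutive. If some $\U_i^c$ were not diminutive, it would contain a computably perfect $\Pi^0_1$ subclass, and then by Theorem~\ref{thm-binns} it would contain a complex element $X$. But $X \in \U_i^c \subseteq \MLR_\mu$, contradicting the hypothesis that $\MLR_\mu$ contains no complex sequence.

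There is no real obstacle here: both directions are immediate translations through Binns' characterization, with the only extra input being the trivial inclusion $\U_i^c \subseteq \MLR_\mu$. The only mild care required is to state cleanly that membership of a complex sequence in \emph{some} $\U_i^c$ (rather than in every $\U_i^c$) is what one gets from $\mu$-randomness, which is exactly what is needed to invoke the diminutiveness hypothesis for that particular $i$.
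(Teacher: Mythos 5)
Your proof is correct and uses essentially the same argument as the paper: both directions go through Binns' characterization (Theorem~\ref{thm-binns}, clauses (1) and (2)) applied to the $\Pi^0_1$ classes $\U_i^c$, together with the inclusion $\U_i^c \subseteq \MLR_\mu$. The only difference is that you phrase both directions as proofs by contradiction whereas the paper argues contrapositively, which is cosmetic.
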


\begin{proof}
Suppose there is some complex $X\in\MLR_\mu$.  Then if $(\U_i)_{i\in\omega}$ is a universal $\mu$-Martin-L\"of test, there is some $i$ such that $X\in\U_i^c$.  But by Theorem \ref{thm-binns}, this implies that $\U_i^c$ contains a computably perfect $\Pi^0_1$ subclass containing $X$, and thus $\U_i^c$ is not diminutive.  For the other direction, if there is no complex $X\in\MLR_\mu$, then by Theorem \ref{thm-binns}, each $\U_i^c$ contains no complex sequence and is thus diminutive.
\end{proof}

Note that it is not the case that every computable atomic measure is diminutive.  For instance, the computable measure constructed in the proof of Theorem \ref{thm:uniform-atoms2} is atomic but not diminutive.  

By combining Proposition~\ref{prop-diminutive} with Theorem~\ref{thm-complex-continuous1}, it is clear that 
if $X$ is random with respect to some computable diminutive measure, then it cannot be random with respect to any computable continuous measure. In light of this observation it is reasonable to state the following open question.
\begin{question}
	Let $X$ be proper and not random with respect to  any computable diminutive measure. Is~$X$ necessarily random with respect to a computable continuous measure? Equivalently by Theorems~\ref{thm-complex-continuous1} and~\ref{thm-complex-continuous2}, is $X$ necessarily complex?
\end{question}

\subsection{Trivial measures}

One subcollection of the computable, diminutive measures are the computable, trivial measures, studied systematically by Porter~\cite{Por15}.

\begin{defn}
A measure $\mu$ is \emph{trivial} if $\mu(\atoms_\mu)=1$.
\end{defn}

The following answers an open question of Porter~\cite{Por12}.

\begin{prop}
Every computable trivial measure is diminutive.
\end{prop}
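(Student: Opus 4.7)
By Proposition~\ref{prop-diminutive}, $\mu$ is diminutive if and only if no complex sequence lies in $\MLR_\mu$. Since every $\mu$-atom is computable (Proposition~\ref{prop-kautz}) and no computable sequence is complex, the proof reduces to the stronger statement that $\MLR_\mu \subseteq \atoms_\mu$. My plan is therefore to exhibit a $\mu$-Martin-L\"of test $(\U_k)_{k \in \omega}$ whose intersection contains every non-$\mu$-atom, so that membership in $\MLR_\mu$ forces being a $\mu$-atom.

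\textbf{The test.} For each $k$, I would effectively produce a finite list of $\mu$-atoms $a_1^k, \ldots, a_{j_k}^k$ (uniformly computable in $k$) whose combined $\mu$-mass exceeds $1 - 2^{-k}$, and define
$$\U_k \;=\; \bigcup_{m \in \omega}\Bigl(\cs \setminus \bigcup_{i=1}^{j_k} \llb a_i^k \uh m \rrb\Bigr),$$
which is a uniformly $\Sigma^0_1$ class. A short calculation shows that $\cs \setminus \U_k$ equals the finite set $\{a_1^k, \ldots, a_{j_k}^k\}$ (a point $X$ lies in $\cs \setminus \U_k$ iff for every $m$ it extends some $a_i^k\uh m$, which by the finiteness of the list forces $X = a_i^k$ for some $i$), so $\mu(\U_k) = 1 - \sum_i \mu(\{a_i^k\}) < 2^{-k}$. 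By arranging the lists so that every $\mu$-atom appears in $\{a_1^k, \ldots, a_{j_k}^k\}$ for all sufficiently large $k$---which is possible since triviality forces the cumulative mass of atoms of size at least $2^{-k}$ to tend to $1$---I obtain $\bigcap_k \U_k = \cs \setminus \atoms_\mu$. Consequently every non-atom fails the test, so $\MLR_\mu \subseteq \atoms_\mu$ and we are done.

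\textbf{Main obstacle.} The delicate step is the uniform effective production of the atom list at stage $k$. For each rational $q > 0$, the computability of $\mu$ makes the tree $T_q = \{\sigma : \mu(\sigma) > q\}$ computable, and its infinite paths are precisely the $\mu$-atoms of mass at least $q$ (of which there are at most $\lfloor 1/q \rfloor$); each such atom is computable by Proposition~\ref{prop-kautz}. Triviality is exactly what guarantees that $\sum_{\mu(\{a\}) \geq q} \mu(\{a\}) \to 1$ as $q \to 0$, so a suitable threshold $q = q(k)$ is known to exist. The plan is therefore to dovetail over shrinking rational thresholds and deepening levels of the trees $T_q$, effectively isolating lower-bound approximations to each heavy atom together with lower-bound approximations to its mass, and to terminate the search for stage $k$ once the accumulated approximated mass first exceeds $1 - 2^{-k}$. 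The combination of the computability of $\mu$ (for approximating masses from below along the branches of $T_q$) and the triviality hypothesis (for guaranteeing that such a lower bound eventually surpasses the target) is what makes the construction uniform in $k$.
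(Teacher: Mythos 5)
Your plan reduces the claim to the stronger statement $\MLR_\mu\subseteq\atoms_\mu$, but that inclusion is false in general, so this route cannot succeed. As the paper notes near the end of Section~\ref{sec-triv-dim-measures} (applying Theorem~3.2 of Porter~\cite{Por15} to a low Martin-L\"of random sequence), there is a computable trivial measure $\mu$ with $\MLR_\mu=\{A\}\cup\atoms_\mu$, where $A$ is low, non-computable, and Martin-L\"of random, hence not a $\mu$-atom by Proposition~\ref{prop-kautz}. For such $\mu$ the $\mu$-Martin-L\"of test $(\U_k)_{k\in\omega}$ you describe cannot exist. The failure is exactly at what you flag as the ``main obstacle'': you write that you will obtain ``lower-bound approximations'' to each atom's mass and to the mass ``from below along the branches of $T_q$,'' but $\mu(\{a\})=\lim_n\mu(a\uh n)$ with $\mu(a\uh n)$ \emph{decreasing} in $n$, so along the tree you only get approximations from \emph{above}. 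There is no uniform way to effectively certify that the accumulated atom mass at stage $k$ has passed $1-2^{-k}$. This is not a repairable technicality: any fix would produce a test establishing $\MLR_\mu\subseteq\atoms_\mu$, contradicting Porter's example.

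The paper's proof is both shorter and avoids building any test, precisely so that it proves only what is needed. Suppose for contradiction that some complex $X$ lies in $\MLR_\mu$, witnessed by a computable order $h$. Then $X$ lies in the $\Pi^0_1$ class $\P=\{Y\colon(\forall n)\,\K(Y\uh n)\geq h(n)\}$, which has no computable members and hence, by Proposition~\ref{prop-kautz}, contains no $\mu$-atoms. Triviality gives $\mu(\P)=0$, so Theorem~\ref{thm-kucera} yields $\MLR_\mu\cap\P=\emptyset$, contradicting $X\in\MLR_\mu\cap\P$. Notice this argument only excludes \emph{complex} sequences from $\MLR_\mu$, which by Proposition~\ref{prop-diminutive} is exactly what diminutiveness requires; non-complex non-atoms such as Porter's $A$ are, correctly, left untouched.
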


\begin{proof}
Let $\mu$ be a computable trivial measure.  Suppose $\mu$ is not diminutive.  Then by Proposition~\ref{prop-diminutive}, there is some complex $X\in\MLR_\mu$, and therefore some computable order~$h$ such that $X$ is contained in the $\Pi^0_1$~class ${\mathcal{P}=\{Y\in\cs\colon \K(Y\uh n)\geq h(n)\}}$.  Since $\mathcal{P}$ contains no computable points, it contains no $\mu$-atoms, and hence $\mu(\mathcal{P})=0$.  It follows from Theorem \ref{thm-kucera} that $\mathcal{P}\cap\MLR_\mu=\emptyset$, which contradicts the statement that $X\in\mathcal{P}\cap\MLR_\mu$. Thus $\mu$ must be diminutive.
\end{proof}

We now establish a connection between initial segment complexity and trivial measures.

\begin{prop}\label{prop-ac-trivial}
Suppose $\mu$ is a computable measure with the property that every ${X\in\MLR_\mu\setminus\atoms_\mu}$ is anti-complex.  Then $\mu$ is trivial.
\end{prop}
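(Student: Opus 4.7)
The plan is to derive a contradiction by combining Theorem~\ref{thm-ac-high} with the classical fact that, for every computable probability measure $\mu$ on $\cs$, the set of sequences of high Turing degree is $\mu$-null. First I would assume for contradiction that $\mu$ is not trivial, set $\alpha = \mu(\atoms_\mu) < 1$, and observe that $\MLR_\mu \setminus \atoms_\mu$ has $\mu$-measure $1 - \alpha > 0$. Each $X \in \MLR_\mu \setminus \atoms_\mu$ is non-computable (since any computable member of $\MLR_\mu$ must be a $\mu$-atom, by Proposition~\ref{prop-kautz} combined with Theorem~\ref{thm-kucera}), proper by definition, and, by hypothesis, anti-complex. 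Theorem~\ref{thm-ac-high} then forces every such $X$ to be of high Turing degree, so $\mu(\{X : X \text{ is high}\}) \geq 1 - \alpha > 0$.

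Next I would show that this is impossible for any computable $\mu$. Using Theorem~\ref{thm-measures-tf}(ii), I fix a $\lambda$-almost total Turing functional $\Phi$ with $\mu = \lambda_\Phi$, so that $\mu(A) = \lambda(\Phi^{-1}(A))$ for every Borel $A$. Since $X \leq_\T Y$ implies $X' \leq_\T Y'$, highness is closed upward under Turing reducibility; equivalently, whenever $Y$ is not high and $\Phi(Y)$ is defined, $\Phi(Y) \leq_\T Y$ is also not high. Combined with the standard fact from algorithmic randomness that $\lambda$-almost every $Y$ is not of high Turing degree, the pushforward identity then yields $\mu(\{X : X \text{ is not high}\}) = 1$, contradicting the lower bound from the previous paragraph and forcing $\alpha = 1$.

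The main obstacle here is the classical input that $\lambda(\{Y : Y \text{ is high}\}) = 0$, which is not established in the excerpt itself. One standard route is to observe that $\lambda$-almost every $Y$ is $2$-random, that $2$-random sequences are generalized-low-1 in the sense that $Y' \equiv_\T Y \oplus \emptyset'$, and then to apply Sacks' theorem in the form $\lambda(\{Y : \emptyset'' \leq_\T Y \oplus \emptyset'\}) = 0$; this can be cited from standard references. With that single ingredient imported, the rest of the argument consists only of routine applications of Theorem~\ref{thm-ac-high}, Theorem~\ref{thm-measures-tf}(ii), and elementary properties of the Turing jump and the pushforward.
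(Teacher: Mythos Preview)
Your proof is correct and follows essentially the same route as the paper's. Both arguments assume $\mu$ is non-trivial, invoke Theorem~\ref{thm-ac-high} to conclude that every $X\in\MLR_\mu\setminus\atoms_\mu$ is high, pull back through a $\lambda$-almost total functional $\Phi$ with $\lambda_\Phi=\mu$ (Theorem~\ref{thm-measures-tf}(ii)), use upward closure of highness under $\leq_\T$, and finish with the fact that the high sequences have Lebesgue measure~$0$. The only cosmetic differences are that the paper phrases the last step by picking a single $3$-random in $\Phi^{-1}(\MLR_\mu\setminus\atoms_\mu)$ and citing Kautz for ``no $3$-random is high,'' whereas you work globally with measures and justify the null set via the $\mathrm{GL}_1$ property of $2$-randoms together with Sacks' theorem; also, your appeal to Proposition~\ref{prop-kautz} is superfluous, since Theorem~\ref{thm-kucera} alone already shows that a computable non-atom cannot lie in $\MLR_\mu$.
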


\begin{proof}
Suppose that $\mu$ is not trivial.  Then $\mu(\MLR_\mu\setminus\atoms_\mu)>0$.  If $\Phi$ is an almost total Turing functional such that~$\mu=\lambda_\Phi$, which exists by Theorem \ref{thm-measures-tf}(ii), then 
$
\lambda(\Phi^{-1}(\MLR_\mu\setminus\atoms_\mu))=\mu(\MLR_\mu\setminus\atoms_\mu)>0$.

Since the collection of 3-random sequences (that is, those sequences that are Martin-L\"of random relative to the oracle $\emptyset''$)  has Lebesgue measure $1$, it follows that $\Phi^{-1}(\MLR_\mu\setminus\atoms_\mu)$ contains a 3-random sequence.  By Theorem \ref{thm-ac-high}, every anti-complex, proper sequence is high.  However, no 3-random is high by Kautz~\cite[Theorem III.2.3]{Kau91}. As highness is closed upwards under Turing reducibility, this yields a contradiction.  Thus $\mu$ must be trivial.
\end{proof}

Using techniques similar to those used in Section \ref{sec-non-complex}, one can construct a computable measure $\mu$ where every ${X\in\MLR_\mu\setminus\atoms_\mu}$ is anti-complex.  However, the converse of Proposition \ref{prop-ac-trivial} does not hold, as there are computable, trivial measures $\mu$ such that 
$\MLR_\mu=\{A\}\cup\atoms_\mu$, where $A$ is low and hence not anti-complex by Theorem \ref{thm-ac-high}; such an example can be obtained, for instance, by applying Theorem 3.2 from Porter~\cite{Por15} to a low Martin-L\"of random sequence.

We conclude by showing that not every computable diminutive measure is trivial.

\begin{thm}
There is a computable diminutive measure that is not trivial.
\end{thm}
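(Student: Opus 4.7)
The plan is to construct a computable measure $\mu$ satisfying (a)~$\mu(\atoms_\mu)<1$ and (b)~every $X\in\MLR_\mu\setminus\atoms_\mu$ is non-complex. Granting both, $\mu$-atoms are computable by Proposition~\ref{prop-kautz} and hence not complex, while (b) handles the remaining randoms; therefore no $X\in\MLR_\mu$ is complex, and Proposition~\ref{prop-diminutive} yields that $\mu$ is diminutive. Non-triviality is (a) directly. Note that an indication of how such a construction might be set up was already given in the paragraph immediately following Proposition~\ref{prop-ac-trivial}.

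To make (b) verifiable, I would reduce it to a local-granularity condition. If some non-atomic $X\in\MLR_\mu$ were complex, then by Proposition~\ref{prop-complex-strcomplex} and Lemma~\ref{lem-complex-inverse}(ii) there would be a computable order $h$ with $\KA(X\uh h^{-1}(n))\geq n$ for all $n$, and Lemma~\ref{lem-granularity-bound} would then bound $g_\mu^X(n)$ by $h^{-1}(n+d)$, a computable function. Hence it suffices to arrange that for every non-atomic $X\in\MLR_\mu$ and every computable $f$, one has $g_\mu^X(n)>f(n)$ for infinitely many $n$.

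For the construction, I would realize $\mu$ as $\lambda_\Gamma$ for a total Turing functional $\Gamma$ adapted from the one in Theorem~\ref{thm:ac-wtt}. In that proof, $\Gamma$ produced an anti-complex proper $Y=\Gamma(X)$ by padding with runs of $1$'s of length equal to the wtt-runtime of a single function $\phi^X$ dominating all computable functions. Here I would instead let $\Gamma$ read $Z$ bit by bit and, at the $n$-th significant output bit, insert a run of $1$'s whose length is the convergence time of $\phi_{e_n(Z)}(n)$, where $e_n(Z)$ is a statistic of $Z$ (for instance, the length of the most recent maximal run of $0$'s in $Z$). For $Z\in\MLR$, $e_n(Z)$ takes every value in $\omega$ infinitely often, so replaying the argument of Theorem~\ref{thm:ac-wtt} once per index $\phi_e$ shows that $g_\mu^{\Gamma(Z)}$ is not dominated by any $\phi_e$ whenever $\Gamma(Z)$ is non-atomic, yielding (b).

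The hard part will be securing (a) simultaneously: if $\Gamma$ simply stalls whenever $\phi_{e_n(Z)}(n)\diverge$, then a random $Z$ eventually hits a divergent computation and $\lambda_\Gamma$ becomes entirely atomic. I expect the main technical obstacle to lie in building a fallback trigger, analogous to Cases~1--2 of Theorem~\ref{thm:uniform-atoms2}'s construction, that diverts $\Gamma(Z)$ to a guaranteed-terminating alternative wait whenever the primary one hangs, while still arranging via a measure-theoretic estimate that the primary, diagonalizing wait fires at a positive density of steps along a positive-measure family of inputs. The final verification then follows the template of Claims~1--4 of Theorem~\ref{thm:uniform-atoms2}, with the single-$\phi_i$ diagonalization replaced by a universal one and the complexity lower bound there replaced by the Theorem~\ref{thm:ac-wtt}-style upper bound.
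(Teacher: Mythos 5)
Your high-level reduction is exactly right: show $\mu(\atoms_\mu)<1$ and that every $X\in\MLR_\mu\setminus\atoms_\mu$ is non-complex; combine Proposition~\ref{prop-kautz} with Proposition~\ref{prop-diminutive}; and enforce non-complexity via the local granularity function and Lemma~\ref{lem-granularity-bound}. The difficulty you correctly flag in your last paragraph, however, is not a technicality to be filled in later---it is the entire content of the theorem, and your proposed mechanism does not resolve it. If you let the block lengths be determined by the convergence time of $\phi_{e_n(Z)}(n)$, where $e_n(Z)$ is read off the bits of $Z$, then for any $Z\in\MLR$ the statistic $e_n(Z)$ will eventually be the index of a nontotal function with $\phi_{e_n(Z)}(n)\diverge$, at which point the functional stalls and $\Gamma(Z)$ ends in a computable tail. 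You cannot computably detect this and ``divert'' to a fallback: running a fallback wait in parallel and taking whichever finishes first destroys the diagonalization on exactly those inputs and indices where you need it, and there is no obvious measure-theoretic estimate showing that a positive-measure set of $Z$ escapes the divergence trap along a schedule that still beats every computable order. There is also a second, quieter gap: even when $e_n(Z)=e$ with $\phi_e$ total, the convergence time of $\phi_e(n)$ need not be large (for slow-growing $\phi_e$ it is small), so it is unclear that the cumulative block structure pushes $g_\mu^{\Gamma(Z)}$ above $\phi_e$ infinitely often.

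The paper fills this gap with a genuinely different device, and it is worth understanding why it sidesteps both problems at once. Rather than extracting an index from $Z$ and waiting for a possibly divergent computation, it starts from a result of Kautz: there is an oracle machine $\phi$ whose domain is a $\Pi^0_2$ class on which $\phi^X$ dominates no computable function, and inside that domain one can find a $\Pi^{0,\emptyset'}_1$ class $\mathcal{R}$ of \emph{positive} Lebesgue measure consisting entirely of 2-randoms. Writing $\mathcal{R}=[T]$ for a $\emptyset'$-computable tree $T=\lim_s T_s$ approximated by uniformly computable trees $T_s$, one defines $f(\sigma)$ as the first stage $s$ with $\sigma\uh i\in T_s$ for all $i\le|\sigma|$. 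This $f$ is partial computable, it converges on all of $T$, and---crucially---when it diverges the functional $\Xi$ simply emits $1^\omega$ and produces a \emph{computable} output, so divergence is harmless and is automatically absorbed into the atomic part. The positive measure of $\mathcal{R}$ is precisely the measure-theoretic estimate you were groping for, and the $T_s$-approximation is the ``guaranteed-terminating alternative wait'' that your scheme lacks. A second functional $\Lambda$ then re-stretches the extracted bits by the runtimes of $\phi^{b_0\dots b_k}(j)$, using the fact that on $\Xi(\mathcal{R})$ these runtimes are defined and fast-growing, and the composite $\Phi=\Lambda\circ\Xi$ is a total functional whose pushforward $\lambda_\Phi$ is the desired measure. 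In short: your reduction and verification strategy are sound and match the paper's, but your construction replaces the essential $\Pi^{0,\emptyset'}_1$ / tree-approximation machinery with a heuristic that, as you yourself note, you have not made work---and there is no routine way to make it work.
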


\begin{proof}
By Kautz'~\cite[Theorem IV.2.4]{Kau91} analysis of Martin's proof that the set of sequences of hyperimmune degree has Lebesgue measure~$1$, there is  an oracle Turing machine $\phi$ such that
$\dom(\phi)$ is a~$\Pi^0_2$~class and such that, for all $X\in\dom(\phi)$, $\phi^X$ is not dominated by a computable function. Then again by Kautz~\cite[Lemma II.1.4(ii)]{Kau91}, $\S$~contains a~$\Pi^{0,\emptyset'}_1$~class~$\P \subseteq \dom(\phi)$ with $\mu(\mathcal{P})>2^{-j}$ for some~$j$.
Let $(\U_i^{\emptyset'})_{i\in\omega}$ be a universal $\emptyset'$-Martin-L\"of test.  Then $\Q=\cs\setminus\U_j^{\emptyset'}$ is a $\Pi^{0,\emptyset'}_1$ class that satisfies $\lambda(\U_j^{\emptyset'})>1-2^{-j}$. Then $\mathcal{R}=\P\cap\Q$ is a $\Pi^{0,\emptyset'}_1$ class as well.
Observe that $\phi^X$ is total for all  $X \in \mathcal{R}$; that for all  $X \in \mathcal{R}$, $\phi^X$ is not dominated by a computable function; that $\lambda(\mathcal{R})>0$; that $\mathcal{R} = \llbracket T \rrbracket$ for some $\emptyset'$-computable tree $T$; and finally that $\mathcal{R}$ only contains 2-random sequences.

Since $\mathcal{R}$ is the set of paths through a $\emptyset'$-computable tree $T$, we can write $T=\lim_s T_s$, where $(T_s)_{s\in\omega}$ is a uniformly computable sequence of computable trees.  
Then 
we can define a partial computable function $f\colon\str\rightarrow\omega$ by letting $f(\sigma)$ be the least stage $s$ such that $\sigma\uh i\in T_s$ for every $i\leq|\sigma|$.  

For $Z\in\cs$ write $Z=z_0z_1z_2\dotsc$ with $z_i\in \{0,1\}$ for all $i$. Then we define a total Turing functional~$\Xi\colon\cs\rightarrow\cs$ on input $Z\in\cs$ in terms of blocks of output $\xi_i^Z$ (written as $\xi_i$).  Let $\xi_{-1}=z_0$.  For each $i\geq 0$, $\xi_i$ will either be finite in length or undefined.  For $n> 0$, $\xi_{n}$ is defined as follows:
\[
\xi_{n}=
	\left\{
		\begin{array}{ll}
			1^{f(Z\uh n)}\,0\,z_n & \mbox{if } f(Z\uh n)\halts, \\
			\mbox{undefined} & \mbox{if } f(Z\uh k)\diverge \mbox{ for some $k\leq n$}.
		\end{array}
	\right.
\]
We then define $\Xi(Z)$ by
\[
\Xi(Z)=
	\left\{
		\begin{array}{ll}
			\xi_{-1}\xi_0\,\xi_1\,\xi_2\dotsc & \mbox{if } |\xi_i|<\infty \mbox{ for every } i\geq 0, \\
			\xi_{-1}\xi_0\,\xi_1\,\xi_2\dotsc\xi_{i-1}1^\omega & \mbox{if $i$ is least such that } \xi_i \mbox{ is undefined.}
		\end{array}
	\right.
\]

Note that if $f(Z\uh n)\diverge$ for some $k$, then $\Xi(Z)$ will have the form $\sigma 1^\omega$ for some $\sigma\in\str$. If on the other hand $f(Z\uh k)\halts$ for all $k \in \omega$, then 
$
\Xi(Z)=z_0\,1^{f(Z\uh 1)}\,0\,z_1\,1^{f(Z\uh 2)}\, 0\,z_2\dotsc
$
We point out that the construction of $\Xi$ is inspired by
the proof of Lemma 2.1 in Ng et al.~\cite{NgSteYan13}.
Clearly $\Xi$ is total, and hence $\Xi(\cs)$ is a $\Pi^0_1$ class.  Moreover, for every $X\in\Xi(\cs)$ either $X$ is computable or $X$ is Turing equivalent to some $Y\in\mathcal{R}$.  Let $S$ be a computable tree such that $[S]=\Xi(\cs)$.

Next we define a total Turing functional $\Lambda\colon\cs\rightarrow\cs$.  For each $Y\in\cs$, $\Lambda(Y)$ will be computed as a series of blocks~$\lambda_i$. For non-computable sequences $Y\in\Xi(\cs)$, which have the form
\begin{equation}\label{eqn:dim}
Y=b_0\,1^{t_1}\,0\,b_1\,1^{t_2}\,0\,b_2\,1^{t_2}\,0\dotsc,
\end{equation}
$\Lambda$ will yield a non-computable output; for all other sequences, $\Lambda$ will yield a computable output.  Given a sequence of the form~(\ref{eqn:dim}), the blocks $\lambda_i$ are successively computed by the following procedure, where, for $\ell \in \omega$, we denote by $m_\ell$ the maximal~$k$ such that $Y\uh \ell$ is long enough to contain the coding location of~$b_k$.

\medskip

\begin{enumerate}[leftmargin=1.2cm,noitemsep]
\item  $j:=0$

\item  $s:=0$

\item  For all $j'\in \omega$, $k_{j'}:=1$

\item  For all $j'\in \{-1\} \cup \omega  $, $\lambda_{j'}:=\varepsilon$

\item  $b_{-1}:=b_0$

\item  If $Y\uh s \notin S$, enter an infinite loop that appends $b_{j-1}^\omega$ to $\lambda_{j-1}$

\item  If $Y \uh k_j$ does not contain the coding location of $b_j$

\item \quad \quad Append $b_{j-1}$ to $\lambda_{j-1}$
 
\item \quad \quad Increment $k_j$

\item Else

\item \quad \quad If $\phi^{b_0 \dotsc b_{m_{k_j}}}(j)[m_{k_j}]\diverge$

\item \quad \quad  \quad \quad Append $b_{j-1}$ to $\lambda_{j-1}$

\item \quad \quad \quad \quad Increment $k_j$

\item \quad \quad Else

\item \quad \quad \quad \quad Append $b_j^{\sum_{\ell=0}^{j-1}|\lambda_\ell| + k_j}$ to $\lambda_j$

\item \quad \quad \quad \quad Increment $j$

\item \quad \quad End If

\item End If

\item Increment $s$

\item Goto 6
\end{enumerate}

\medskip

\noindent As a result we obtain that
\[
\lambda_{i}=
	\left\{
		\begin{array}{ll}
			\varepsilon & \mbox{if } |\lambda_{i'}|=\infty \mbox{ for some $i'<i$},\\
		
			b_i^\omega &  \mbox{else if $j$ in the algorithm converges to } i+1, \\
		
			b_i^{k'} \mbox{ for some } k' \geq k & \mbox{else if } \exists k\,\forall i' \leq i\colon	\phi^{b_0\dotsc b_{m_k}}(i')[k]\halts. \\		
			
		\end{array}
	\right.
\]
We then define $\Lambda(Y)$ by
\[
\Lambda(Y)=
	\left\{
		\begin{array}{ll}
			\lambda_{-1}\lambda_0\,\lambda_1\,\lambda_2\dotsc & \mbox{if } |\lambda_i|<\infty \mbox{ for every } i>0, \\
			\lambda_{-1}\lambda_0\,\lambda_1\,\lambda_2\dotsc\lambda_{i} & \mbox{if $i$ is least such that } |\lambda_i|=\infty.
		\end{array}
	\right.
\]

We now have three cases to consider. 

\smallskip

\noindent\emph{Case 1:} $Y\notin\Xi(\cs)$.  In this case, for some output block $\lambda_i$ we will have $\lambda_i=b_i^\omega$ due to line 6~in the algorithm.
Hence we will have $\Lambda(Y)=\sigma\,b^\omega$ for some $b\in\{0,1\}$, where $\sigma\in\str$ are some bits output before the first stage~$s$ with~$Y\uh s\notin S$.

\smallskip

\noindent\emph{Case 2:} $Y\in\Xi(\cs)$ but $Y$ is computable.  Then there is some $W\notin\mathcal{R}$ such that $\Xi(W)=Y$, and hence $Y$ must end in a tail of $1$'s.
Then $\Lambda$ will only be able to extract finitely many bits $b_0,\dotsc,b_k$ from~$Y$.  
Then for $j=k+1$ the ``If'' in line~7 in the algorithm will always give a positive answer and 
we will have $\lambda_k=b_k^\omega$.
Hence $\Lambda(Y)=\sigma\, b^\omega$ for some $\sigma\in\str$ and some~$b\in\{0,1\}$.

\smallskip

\noindent\emph{Case 3:}  $Y\in\Xi(\cs)$ and $Y$ is not computable.  Then by our construction there is an~$X\in\mathcal{R}$ such that~$\Xi(X)=Y$.  $X\in\mathcal{R}$~implies that $\phi^X$ is total and not dominated by any computable function.  Thus we will have
$
\Lambda(Y)=b_0^{k'_0}\,b_1^{k'_1}\,b_2^{k'_2}\dotsc
$
for a sequence $(k'_i)_{i \in \omega}$ that is non-decreasing by line~15 in the algorithm.
In addition, by our convention that the value~$\phi^X(n)$ is at most equal to the least $s$ such that $\phi_s^X(n)\halts$, we have $\phi^X(i')\leq k'_i$ for every $i'\leq i$.  It follows that the function $i\mapsto k'_i$ is not dominated by any computable function.  Hence $\Lambda(Y)$ is not computable.

Now let $\Phi=\Lambda\circ\Xi$ and set $\nu=\lambda_\Phi$.  
By construction, $\Phi$ maps no sequence in $\mathcal{R}$ to an atom, and since
${\nu(\Phi(\mathcal{R})) = \lambda(\mathcal{R})>0}$, we have
${\nu(\atoms_\nu)\geq 1-\lambda(\mathcal{R})<1}$, that is, $\nu$ is not trivial.

To establish that $\nu$ is diminutive, we prove that no $W\in\MLR_\nu\setminus\atoms_\nu$ is complex.  Given {$W\in\MLR_\nu$}, it follows from the no randomness ex nihilo principle that there is some $X\in\MLR$ such that $\Phi(X)=W$ (see, for instance, Bienvenu and Porter~\cite[Theorem 3.5]{BiePor12}).  Note that we must have $X\in\mathcal{R}$, since $X\notin\mathcal{R}$ implies that $\Xi(X)$ is computable, and hence so is $\Phi(X)=\Lambda(\Xi(X))=W$.  But $W\in\MLR_\nu\setminus\atoms_\nu$ implies that $W$ is not computable.  It follows from the above case distinction that if we write $X=x_0x_1x_2\dotsc$ with $x_i \in \{0,1 \}$, for $i \in \omega$, then
\[
W=x_0^{k'_0}\,x_1^{k'_1}\,x_2^{k'_2}\dotsc,
\]
where the non-decreasing function $i\mapsto k'_i$ is not dominated by any computable function.  If we merge identical blocks of bits in $W$, we can write $W$ as
\[
W=a_0^{\ell_0}\,a_1^{\ell_1}\,a_2^{\ell_2}\dotsc,
\]
where $a_i \in \{0,1\}$ and $a_i\neq a_{i+1}$ for every $i\in \omega$ and the function $i\mapsto \ell_i$ is not dominated by any computable function (since $i\mapsto k'_i$ is non-decreasing and is not dominated by any computable function).  One can now argue as in the proof of Theorem \ref{thm-ioc-ioac} that $W$ is not complex; as there is no new idea involved, we leave this to the reader.  It thus follows that $\nu$ is diminutive, and the proof is complete.
\end{proof}

In the above proof, we showed that for every $W\in\MLR_\nu\setminus\atoms_\nu$, there is some non-computable $X\in\mathcal{R}$ such that~$\Phi(X)=W$.  Moreover, $X$ is 2-random, and hence by the relativization of the preservation of randomness (Theorem~\ref{thm-rand-pres}), it follows that $Y$ is 2-random with respect to $\nu$.  Since no sequence that is 2-random with respect to a computable measure is~$\Delta^0_2$ (that is,~Turing reducible to~$\emptyset'$), we have an alternative (priority-free) proof of the following known result.

\begin{corollary}[Kautz~\cite{Kau91}]
There is a computable, non-trivial measure $\mu$ such that there is no $\Delta^0_2$, non-computable $X\in\MLR_\mu$.
\end{corollary}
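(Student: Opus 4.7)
The plan is to take $\mu = \nu$, where $\nu$ is the computable diminutive non-trivial measure built in the preceding theorem, and show that this $\nu$ has the desired property. Non-triviality has already been established: it was shown that $\nu(\atoms_\nu) \leq 1 - \lambda(\mathcal{R}) < 1$. So the task reduces to proving that no non-computable $\Delta^0_2$ sequence lies in $\MLR_\nu$.

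Suppose for contradiction that $W \in \MLR_\nu$ is non-computable and $\Delta^0_2$. Since every $\nu$-atom is computable by Proposition~\ref{prop-kautz}, $W$ is a non-atom, i.e.\ $W \in \MLR_\nu \setminus \atoms_\nu$. Now I invoke the same analysis used in the preceding proof: by the no-randomness-ex-nihilo principle (Bienvenu and Porter~\cite{BiePor12}), there exists $X \in \MLR$ with $\Phi(X) = W$, and the case distinction from the previous proof forces $X \in \mathcal{R}$ (otherwise $\Phi(X)$ would be computable, contradicting non-computability of $W$). But $\mathcal{R}$ was built as a subclass of a $\Pi^{0,\emptyset'}_1$ class disjoint from the level $\U_j^{\emptyset'}$ of a universal $\emptyset'$-Martin-L\"of test, so every member of $\mathcal{R}$ is $2$-random.

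Next I apply the relativized version of the randomness preservation theorem~\ref{thm-rand-pres}: since $X$ is Martin-L\"of random relative to $\emptyset'$ and $\Phi$ is $\lambda$-almost total (hence almost total relative to $\emptyset'$ as well), the image $W = \Phi(X)$ is Martin-L\"of random relative to $\emptyset'$ with respect to the measure $\nu = \lambda_\Phi$; that is, $W$ is $2$-$\nu$-random.

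The final step, and the only real obstruction to a one-line argument, is to observe that no sequence that is $2$-random with respect to a computable measure can be $\Delta^0_2$. This follows from the relativized Levin--Schnorr theorem: if $W$ is $2$-$\nu$-random for computable $\nu$, then $\K^{\emptyset'}(W \uh n) \geq -\log\nu(W \uh n) - O(1)$, whereas if $W \leq_\T \emptyset'$ then $\K^{\emptyset'}(W \uh n) \leq \K^{\emptyset'}(n) + O(1) = O(\log n)$, forcing $\nu(W \uh n)$ to decay only polynomially; but then $W$ would be an atom of $\nu$ (and hence computable), a contradiction. This contradiction with the non-computability of $W$ completes the proof.
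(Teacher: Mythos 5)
Your overall strategy matches the paper's: take $\mu = \nu$ from the preceding theorem, argue that any non-computable $W \in \MLR_\nu$ must be a non-atom arising as $\Phi(X)$ for some $2$-random $X \in \mathcal{R}$, and conclude via relativized randomness preservation that $W$ is $2$-random with respect to $\nu$ and hence not $\Delta^0_2$. The paper invokes without proof the fact that a non-atom sequence cannot simultaneously be $\Delta^0_2$ and $2$-random for a computable measure; you attempt to supply a proof of this, and that is where a genuine gap appears.

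Your calculation gives $\K^{\emptyset'}(W\uh n) \leq \K^{\emptyset'}(n) + O(1) = O(\log n)$ and hence, by the relativized Levin--Schnorr theorem, $-\log\nu(W\uh n) = O(\log n)$, i.e.\ polynomial decay of $\nu(W\uh n)$. But polynomial decay does \emph{not} force $W$ to be a $\nu$-atom: $\nu(W\uh n)$ can perfectly well tend to zero while remaining above $1/(n+1)$, say. Nor can you sharpen the bound to $\K^{\emptyset'}(n)=O(1)$ along a subsequence, since $\K^{\emptyset'}(n)\to\infty$. The intended argument is a singleton test rather than a complexity estimate: if $W\leq_\T\emptyset'$ and $\nu(\{W\})=0$, then for each $i$ one can, with oracle $\emptyset'$, compute $W\uh n$ and approximate $\nu(W\uh n)$ until finding $n_i$ with $\nu(W\uh n_i)<2^{-i}$ (the search terminates because $\nu(W\uh n)\to 0$), and $(\llb W\uh n_i\rrb)_{i\in\omega}$ is a $\nu$-Martin-L\"of test relative to $\emptyset'$ capturing $W$. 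Equivalently, apply the relativization of Theorem~\ref{thm-kucera} to the $\Pi^{0,\emptyset'}_1$ class $\{W\}$. Either way $W$ is not $2$-$\nu$-random, giving the contradiction you wanted; the Levin--Schnorr route as you wrote it does not close.
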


\section*{Acknowledgements}

The authors would like to thank Wolfgang Merkle for helpful
discussions about the subjects of this article, and for detailed
comments on an earlier draft.

H\"{o}lzl was supported by a Feodor Lynen postdoctoral research
fellowship by the Alexander von Humboldt Foundation
and by grant
R146-000-184-112 (MOE2013-T2-1-062) of the Ministry of Education Singapore. Porter was supported by grant OISE-1159158 of the
National Science Foundation as part of the
International Research Fellowship Program and is supported by grant  48003 of the John Templeton Foundation as part of the project ``Structure and Randomness
in the Theory of Computation.'' The opinions expressed in this
publication are those of the authors and do not necessarily reflect the
views of the John Templeton Foundation.

\appendix
\renewcommand*{\thesection}{\Alph{section}}

\section{Appendix}\label{appendix}

\begin{proof}[Proof of Lemma \ref{lem-KA-K}]
Define a functional $\Psi$ such that $(\rho,\sigma\tau)\in\Psi$ if and only if there are strings ${\rho_0,\rho_1\in\str}$ such that
\begin{itemize}[noitemsep,topsep=0pt]
\item[(i)] $\rho=\rho_0\rho_1$;
\item[(ii)] $U(\rho_0)=\sigma$, where $U$ is the universal prefix-free machine; and
\item[(iii)] $(\rho_1,\tau)\in S_\Phi$, where $\Phi$ a Turing functional that induces the universal left-c.e.\ semi-measure.
\end{itemize}
Then if $\sigma^*$ is the length-lexicographically least string such that $U(\sigma^*)=\sigma$,
$
\llb\Psi^{-1}(\sigma\tau)\rrb\supseteq\llb\{\rho_0\rho_1\colon\rho_0=\sigma^*\;\&\;\rho_1\in\Phi^{-1}(\tau)\}\rrb
$.
Hence
$
\lambda(\llb\Psi^{-1}(\sigma\tau)\rrb)\geq\lambda(\llb\{\rho_0\rho_1\colon\rho_0=\sigma^*\;\&\;\rho_1\in\Phi^{-1}(\tau)\}\rrb)=2^{-\K(\sigma)}\lambda(\llb\Phi^{-1}(\tau)\rrb)
$.
By the universality of $M$ there is some~$e$ such that $2^e \cdot M(\sigma\tau)\geq \lambda(\llb\Psi^{-1}(\sigma\tau)\rrb)$, and we have
$
2^e \cdot M(\sigma\tau)\geq2^{-\K(\sigma)}\lambda(\llb\Phi^{-1}(\tau)\rrb)
$.
Taking the negative logarithm of both sides yields $\KA(\sigma\tau)\leq \K(\sigma)+\KA(\tau)+O(1)$.
\end{proof}

\begin{proof}[Proof of Lemma \ref{lem:gran0}]

(i) First, for $k\in\omega$, $g(g^{-1}(k))\geq k$, and hence by the hypothesis we have $f(g^{-1}(k))\geq g(g^{-1}(k))\geq k$. Thus $f^{-1}(k)\leq g^{-1}(k)$. Now if $f^{-1}(k)=n$, so that $f(n)\geq k$, it follows by hypothesis that $g(n+c)> f(n)\geq k$.  Thus, $g^{-1}(k)\leq n+c=f^{-1}(k)+c$, which yields $f^{-1}(k)\geq g^{-1}(k)-c$.  Hence, we have, for every $k$,
$
g^{-1}(k)-c\leq f^{-1}(k)\leq g^{-1}(k)
$.

\noindent (ii) Let $k=f^{-1}(n+c)$, so that $f(k)\geq n+c$.  Then, since $f(k)\leq g(k)+c$, it follows that $g(k)\geq n$. Thus, ${g^{-1}(n)\leq k=f^{-1}(n+c)}$.
\end{proof}

\begin{proof}[Proof of Lemma \ref{lem-complex-inverse}]
	
(i) By definition of $g^{-1}$, $g^{-1}(n)$ is the least $k$ with $g(k)\geq n$.  It thus follows that ${g(g^{-1}(n)-1)<n}$.  By the monotonicity of $\KA$ we have $\KA(X\uh n)\geq\KA(X\uh g(g^{-1}(n)-1))\geq h(g^{-1}(n)-1)$.

\smallskip

\noindent (ii) Since $f(f^{-1}(n))\geq n$, we have $\KA(X\uh f^{-1}(n))\geq f(f^{-1}(n))\geq n$.
\end{proof}

\begin{proof}[Proof of Proposition \ref{prop-complex-strcomplex}]
First, it is routine to show that $\KA(\sigma)\leq \K(\sigma)+O(1)$ for every $\sigma$, from which it follows that every strongly complex sequence is complex.  For the converse, suppose that $X\in\cs$ satisfies $\K(X\uh h(n))\geq n$ for some computable order~$h$ and every $n$.  It is known that for every $\sigma$, we have $\K(\sigma)\leq \KA(\sigma)+\K(|\sigma|)+O(1)$ (see, for example, Downey and Hirschfeldt~\cite[Theorem~4.5.3]{DowHir10}).  If we restrict this inequality to strings~$\sigma$ of length $h(n)$, we have $\K(\sigma)\leq \KA(\sigma)+\K(h(n))+O(1)$.  But then since $\K(h(n))\leq \K(n)+O(1)$ for every $n$, we have $\K(\sigma)\leq \KA(\sigma)+\K(n)+O(1)$ for every string $\sigma$ of length $h(n)$.  Thus for every $n$ we have
\[
\begin{array}{rcl}
n &\leq& \K(X\uh h(n))\\
&\leq &\KA(X\uh h(n))+\K(n)+O(1)\\
&\leq &\KA(X\uh h(n))+2\log(n)+O(1).
\end{array}
\]
Then $\KA(X\uh h(n))\geq n-2\log(n)-O(1)$, and we can apply Lemma \ref{lem-complex-inverse}(i) to conclude that $\KA(X\uh n)$ dominates some computable order.  Hence $X$ is strongly complex.
\end{proof}

\begin{proof}[Proof of Proposition \ref{prop:gran}]

Suppose that for every $c$ there is some $n$ such that
\begin{equation}\label{eq-gran}
\KA(X\uh g_\mu(n))<n-c.
\end{equation}
By definition of $g_\mu$, for every $\sigma\in\str$ such that $|\sigma|=g_\mu(n)$ we have $\mu(\sigma)<2^{-n}$, which further implies that $n<-\log\mu(\sigma)$.  Applying this latter fact  to Equation~(\ref{eq-gran}) with $\sigma=X\uh g_\mu(n)$, we can conclude that for every $c$ there is some $n$ such that
\[
\KA(X\uh g_\mu(n))<-\log\mu(X\uh g_\mu(n))-c.
\]
But this implies that $X\notin\MLR_\mu$ by the Levin-Schnorr Theorem~\ref{thm-levin-schnorr}, contradicting our assumption.  Lastly, by a~reasoning similar to that in the proof of Lemma \ref{lem-complex-inverse}(i), from the inequality $\KA(X\uh g_\mu(n))\geq n-c$ for every $n$ we can derive $\KA(X\uh n)\geq g^{-1}_\mu(n)-c$ for every $n$.
\end{proof}

\begin{proof}[Proof of Lemma \ref{lem:gran1}] To define $f$, given $n$, we search a $\langle k,s\rangle$ (where $\langle\cdot,\cdot\rangle\colon\omega^2\rightarrow\omega$ is a computable 
pairing function) with
\begin{equation}\label{sdjasdfgdfgdfg}
(\forall \sigma\in 2^k)\; \mu_s(\sigma)+2^{-s}<2^{-n} 
\quad \text{and} \quad (\exists \tau\in 2^k)\; \mu_s(\tau)-2^{-s}>2^{-(n+2)}.
\end{equation}
Clearly for such a $k$, we will have $g_\mu(n)\leq k< g_\mu(n+2)$, so we can let $f(n)=k$.  

It remains to argue that for every $n$ such a $\langle k,s\rangle$ exists. We first claim 
that $g_\mu(n+1)\geq g_\mu(n)+1$ for every $n$. Suppose otherwise.  Then there is some $n$ such that $g_\mu(n+1)=g_\mu(n)$.  Setting $k=g_\mu(n)-1$, there is some $\sigma\in 2^k$ such that ${\mu(\sigma)\geq 2^{-n}}$ but $\mu(\sigma0)<2^{-(n+1)}$ and $\mu(\sigma1)<2^{-(n+1)}$, which implies that ${\mu(\sigma)=\mu(\sigma0)+\mu(\sigma1)<2^{-(n+1)}+2^{-(n+1)}=2^{-n}}$,  a contradiction.
So there is always a~$k$ strictly between $g_\mu(n)$ and $g_\mu(n+2)$ that satisfies Equation~(\ref{sdjasdfgdfgdfg}) for some~$s$.
\end{proof}

We use the following lemma in the proofs of Lemma \ref{prop-non-complex-KA} and  Proposition~\ref{prop-non-complex-KA-2}.

\begin{lem}\label{lem-complex-inverse-2}
Let $X\in\cs$ and let $h$ be an order.  
\begin{itemize}[noitemsep,topsep=0pt]
\item[(i)] If $\KA(X\uh h(n))\leq n$ for almost every $n$, then $\KA(X\uh n)\leq h^{-1}(n)$ for almost every~$n$.
\item[(ii)] If $\KA(X\uh n)\leq h(n)$ for almost every $n$, then $\KA(X\uh h^{-1}(n)-1)< n$ for almost every ~$n$.
\end{itemize}
\end{lem}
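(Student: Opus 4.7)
The plan is to prove both parts directly from the defining property of $h^{-1}$ together with the monotonicity of $\KA$ (which was noted just after the definition of $\KA$).

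For part (i), given $n$ I would set $m = h^{-1}(n)$, so that by definition $h(m) \geq n$. Monotonicity of $\KA$ with respect to the prefix order then yields $\KA(X \uh n) \leq \KA(X \uh h(m))$. Since $h$ is an order, $m = h^{-1}(n) \to \infty$ as $n \to \infty$, so for all sufficiently large $n$ the index $m$ falls within the range where the hypothesis $\KA(X \uh h(m)) \leq m$ holds. Combining the two inequalities gives $\KA(X \uh n) \leq m = h^{-1}(n)$ for almost every $n$.

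For part (ii), I would again invoke the defining property of $h^{-1}$: for every $n \geq 1$, the value $h^{-1}(n)$ is the least $k$ with $h(k) \geq n$, so $h(h^{-1}(n) - 1) < n$ whenever $h^{-1}(n) \geq 1$. Setting $k = h^{-1}(n) - 1$, the hypothesis $\KA(X \uh k) \leq h(k)$ for almost every $k$ then gives $\KA(X \uh h^{-1}(n) - 1) \leq h(h^{-1}(n)-1) < n$. Once more, because $h$ is unbounded, $h^{-1}(n) - 1 \to \infty$ as $n \to \infty$, so the ``almost every'' qualification transfers to the $n$-indexed statement.

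There is no genuine obstacle here; both arguments are essentially a change of variable via $h^{-1}$, with the only point requiring any care being the bookkeeping of the ``almost every'' quantifier, which is handled by the fact that $h^{-1}$ tends to infinity. Since the proof is very short and purely formal, it fits naturally in the appendix alongside the analogous Lemma \ref{lem-complex-inverse}, which it closely parallels (with the direction of the inequalities reversed).
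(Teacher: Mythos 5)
Your proposal is correct and takes essentially the same approach as the paper: both parts rest on the defining inequalities $h(h^{-1}(n))\geq n$ and $h(h^{-1}(n)-1)<n$ combined with the monotonicity of $\KA$, with the ``almost every'' bookkeeping handled by $h^{-1}\to\infty$. Your write-up just spells out the change-of-variable and quantifier transfer a bit more explicitly than the paper does.
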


\begin{proof}
(i) Since $h(h^{-1}(n))\geq n$, by the monotonicity of $\KA$,  we have for sufficiently large $n$ that
\[
\KA(X\uh n)\leq \KA(X\uh h(h^{-1}(n)))\leq h^{-1}(n).
\]
(ii) Since $h(h^{-1}(n)-1)< n$, we have for sufficiently large $n$ that
$
\KA(X\uh h^{-1}(n)-1)\leq h(h^{-1}(n)-1)< n$.
\end{proof}

\begin{proof}[Proof of Proposition \ref{prop-non-complex-KA}]
	
(i) For the right-to-left direction, since there is some $c$ such that $\KA(\sigma)\leq \K(\sigma)+c$ for every~$\sigma$, we have, for infinitely many $n$,
$
n\leq \KA(X\uh f(n))\leq \K(X\uh f(n))+c$.
Hence, $\K(X\uh f(n+c))\geq n$ for infinitely many $n$.  For the left-to-right direction, if $\K(X\uh f(n))\geq n$ for infinitely many $n$, by following the steps of the proof of the left-to-right direction of Proposition \ref{prop-complex-strcomplex}, one can show that there is a computable order~$g$ such that $\KA(X\uh n)\geq g(n)$ for infinitely many $n$.  Let $h(n)$ be a computable order satisfying $h^{-1}(n)< g(n)$ for almost every $n$.  We claim that $\KA(X\uh h(n))\geq n$ for infinitely many $n$.  If not, then $\KA(X\uh h(n)) \leq n$ for almost every $n$.  Then by Lemma \ref{lem-complex-inverse-2}(i), $\KA(X\uh n)\leq h^{-1}(n)< g(n)$ for almost every $n$, which yields the desired contradiction.

\smallskip

\noindent (ii) For the left-to-right direction, let $c$ satisfy $\KA(\sigma)\leq \K(\sigma)+c$ for every $\sigma\in\str$.  Then given a computable order~$f$, since $X$~is anti-complex, it follows that 
$\K(X\uh f(n+c))\leq n$ for almost every $n$.  Then $\KA(X\uh f(n+c))\leq \K(X\uh f(n+c))+c\leq n+c$ for almost every $n$. For the other direction, suppose that $X$ is not anti-complex.  Then 
it follows that $X$ is i.o.\ complex, and thus by part (i), it is not the case that for every computable order~$f$, $\KA(X\uh f(n))\leq n$ for almost every $n$.

\smallskip

\noindent (iii)  This follows from the fact that a sequence is i.o.\ anti-complex if and only if it is not complex if and only if it is not strongly complex by Proposition~\ref{prop-complex-strcomplex}, and
the characterization of strong complexity given by Corollary \ref{cor-sc}.
\end{proof}

\begin{proof}[Proof of Proposition \ref{prop-non-complex-KA-2}]
Part (i) follows from part (ii) and the fact that a sequence is i.o.\ complex if and only if it is not anti-complex.  (ii) For the left-to-right direction, let $f$ be an arbitrary computable order. Let $h$~be a computable order such that $h^{-1}\leq f$.  Then by assumption, $\KA(X\uh h(n))\leq n$ for almost every~$n$.  Hence by Lemma~\ref{lem-complex-inverse-2}(i),  for almost every~$n$,
$
\KA(X\uh n)\leq h^{-1}(n)\leq f(n)
$.
For the right-to-left direction, again let $f$ be an arbitrary computable order, and let $h$~be a computable order such that~$f(n)+1\leq h^{-1}(n)$.  By assumption, $\KA(X\uh n)\leq h(n)$ for almost every $n$, and so by our assumption on~$f$ and Lemma \ref{lem-complex-inverse-2}(ii), for almost every $n$,
$
\KA(X\uh f(n))\leq \KA(X\uh h^{-1}(n)-1)<n
$.
Lastly, Part~(iii) follows from Corollary~\ref{cor-sc} and the fact that a sequence is i.o\@.~anti-complex if and only if it is not complex.  
\end{proof}

\section*{References}

\bibliographystyle{elsarticle-num}
\bibliography{heidelberg}

\end{document}